\newif\ifAMS
\AMStrue\usepackage{amssymb}}
\theoremstyle{plain}
\newtheorem{theorem}{Theorem}[section]
\newtheorem{lemma}[theorem]{Lemma}
\newtheorem{corollary}[theorem]{Corollary}
\newtheorem{proposition}[theorem]{Proposition}
\newtheorem{remark}[theorem]{Remark}
\newtheorem{example}[theorem]{Example}
\theoremstyle{definition}
\newtheorem{Def}[theorem]{Definition}
\newcommand {\N}{{\mathbb N}}
\newcommand{\C}{{\mathbb C}}
\title{Subgroups of Quantum Groups}
\author{Georgia Christodoulou} 
\address{Mathematical Institute, Andrew Wiles Building, University of Oxford, Radcliffe Observatory Quater, Woodstock Rd, Oxford OX2 6GG.}
\email{georgia.christodoul@gmail.com}
\keywords{Quantum groups; Quantum homogeneous spaces; Hopf Algebras; Quotient H-module coalgebras; Monoidal categories; Module categories; Categories of comodules.}
\begin{document}
\maketitle

\begin{abstract}
We investigate the notion of a subgroup of a quantum group. We suggest a general definition, which takes into account the work that has been done for quantum homogeneous spaces. We further restrict our attention to reductive subgroups, where some faithful flatness conditions apply. Furthermore, we proceed with a categorical approach to the problem of finding quantum subgroups. We translate all existing results into the language of module and monoidal categories and give another characterization of the notion of a quantum subgroup. 
\end{abstract}

\tableofcontents   


\section{Introduction}

Although quantum groups have been defined and understood for several years now, the notion of a quantum subgroup is still a bit unclear in the literature.  Consider a general reductive group $G$. We know that  both the universal eneveloping algebra $U(\mathfrak{g})$ and the coordinate ring $\mathcal{O}(G)$ have the structure of a Hopf algebra. The same is true for their quantum counterparts $U_q(\mathfrak{g})$ and $\mathcal{O}_q(G)$. Now, given a subgroup $M$ of $G$ we would like to define its quantum version $\mathcal{O}_q(M)$. The first guess is to require for this to give rise to a Hopf algebraic structure as well, and therefore to look at the ``correct" (with respect to $M$)  Hopf subalgebra of  $U_q(\mathfrak{g})$ or quotient Hopf agebra of $\mathcal{O}_q(G)$. This is what happens when $M$ is for instance a Borel subgroup of $G$.  The need for a Hopf algebra structure has also been demonstrated in several cases (see for example \cite{Schneider}, \cite{Parshall&Wang}, or \cite{S.Wang}) where by definition a quantum subgroup is first of all a Hopf algebra satisfying some extra conditions.
However, it has been clear for some time now that this definition is too restrictive in general. The theory that has been developed for quantum homogeneous spaces shows that Hopf subalgebras of a given Hopf algebra are not enough. Podl\'es in \cite{Podles} constructs a class of quantum homogeneous spaces with an $SU_q(2)$-action which correspond to the classical 2-sphere $SU(2)/SO(2)$. Dijkhuizen in \cite{Dijkhuizen} gives a survey regarding the construction of some compact quantum symmetric spaces such as $SU(n)/SO(n)$ or $SU(2n)/Sp(n)$. Letzter in \cite{Letzter} constructs quantum symmetric spaces. In all of the above cases, quantum homogeneous spaces are defined as coideal subalgebras and cannot be associated to any Hopf algebra. In addition to this, M\"uller and Schneider in \cite{Muller&Schneider} proved that in the cases of Podl\'es and  Dijkhuizen above, $\mathcal{O}_q[G]$ is faithfully flat over the defined coideal subalgebra, enriching at the same time the theory with the notions of semisimplicity and cosemisimplicity.

Let us see how the language of coideal subalgebras together with the faithful flatness condition can give us a definition for a quantum subgroup.
Given a closed subgroup $M$ of $G$ we can consider its corresponding Lie subalgebra $\mathfrak{m} \subset \mathfrak{g}$. Then $U(\mathfrak{m}) \subset U(\mathfrak{g})$ is a left coideal subalgebra. In the quantum case, if we can find a coideal of $U_q(\mathfrak{g})$ to be the quantum analogue of $U(\mathfrak{m}) $ then using the general theory of Hopf algebras we can construct a right coideal subalgebra $A$ of $\mathcal{O}_q(G)$ but also a quotient left coalgebra $B$. Moreover it is known that $\mathcal{O}_q(G)$ is faithfully flat over $A$ if and only if it is faithfully coflat over $B$. Moreover we can consider the category $\mathcal{M}^{\mathcal{O}_q(G)}_A$ whose objects are $\mathcal{O}_q(G)$-comodules which are also $A$-modules with the compatibility condition that the $\mathcal{O}_q(G)$-comodule map is a morphism of $A$-modules. Then, under the faithfully flat condition, $\mathcal{M}^{\mathcal{O}_q(G)}_A$ is equivalent to the category of $B$-comodules. We use this quotient coalgebra $B$ as the definition of the quantum subgroup corresponding to $M$. Our intuition is that if we think of $A$ as the quantum coordinate algebra $\mathcal{O}_q(G/M)$ corresponding to the classical homogeneous space $G/M$, then the category $\mathcal{M}^{\mathcal{O}_q(G)}_A$ corresponds to vector spaces with an action of the (unknown) quantum subgroup coorresponding to $M$. By the above, this is equivalent to the category of $B$-comodules. Hence, $B$ carries the right representation theory with respect to the classical $M$, and therefore can be used as a definition for $\mathcal{O}_q(M)$. This is one way to look at quantum subgroups. 

However, in addition to the above, there is an alternative, categorical approach to the problem of defining quantum subgroups. Classically, whenever $M$ is a subgroup of $G$ we can consider the categories of their representations. $\text{Rep}(G)$ is a monoidal category, and $\text{Rep}(M)$ becomes a module category over it. Moreover between the two categories can be defined a restriction and an induction functor. The same should be true for their quantum analogues.  The category of $\mathcal{O}_q(G)$-comodules is again a monoidal category, and it is natural to expect that the category of comodules corresponding to a quantum subgroup of $G$ should have the structure of a module category over it as happens in the classical case. So, in order to define a quantum subgroup it is enough to find the appropriate module category over the category of $\mathcal{O}_q(G)$-comodules. The idea that module categories are related to quantum subgroups has been found in previous works as well. In \cite{Kirillov+Ostrik} Kirillov and Ostrik classify the ``finite subgroups in $U_q(\mathfrak{sl}_2)$” , where $q = e^{\pi i / l}$ is a root of unity. For them a subgroup in  $U_q(\mathfrak{sl}_2)$ is a commutative associative algebra in a tensor category $\mathcal{C}$.  Similarly, in \cite{Grossman&Snyder} Grossman and Snyder interpret quantum subgroups of finite groups as simple module categories over the category $\mathcal{C}$ of $G$-modules. Ocneanu in \cite{Ocn} uses similar definitions to classify quantum subgroups of $SU(n)$. This approach gives a categorical characterization of the notion of a quantum subgroup and it provides a dictionary between the two worlds that gives us more flexibility when dealing with the theory of quantum subgroups.

Let us now say a few words on how this paper is organized. 
The first part deals with the definition of a quantum subgroup using the language of Hopf algebras. 
In section \ref{Hopf Algebras} we give the main results from the theory of Hopf algebras which we think are the correct tools for the definition of a quantum subgroup. These are results that have been known for some time now, but not exactly used in this context. We look at both coideal subalgebras and quotient coalgebras, which are dual to each other and provide two equivalent perspectives. We see how this correspondence is achieved and obtain results for categories of representations over them. In section \ref{ss and cs} we also discuss the notions of semisimplicity and cosemisimplicity and how they are related to the faithful flatness condition. Finally in \ref{def of qs} we give our definition. We suggest that quantum subgroups should correspond to certain module quotient coalgebras (or equivalently coideal subalgebras) of a Hopf algebra. In particular if we restrict to subgroups for which the corresponding quotient space is affine, a faithful coflatness condition (respectively faithful flatness condition) is required.

In the second part of the paper we focus on the categorical approach of the problem. We begin by recalling the general theory of monoidal and module categories. In section \ref{prwto} we formulate the problem of definining quantum subgroups using the language of module categories and give our alternative definitions. In particular, we start with a monoidal category $\mathcal{C}$ and a module category $\mathcal{M}$ over it. We prove in Theorem \ref{theorem 1} that if there exist adjoint functors of module categories $Res : \mathcal{C} \rightarrow \mathcal{M}$ and $Ind : \mathcal{M} \rightarrow \mathcal{C}$ such that $Ind$ is exact and faithful, then the module category $\mathcal{M}$ is equivalent to a module category $\text{Mod}_{\mathcal{C}}(A)$ for an algebra $A \in \mathcal{C}$. We investigate the properties of such module categories when $\mathcal{C}$ is taken to be the category of comodules over a Hopf algebra $H$. 
In section \ref{deytero} we proceed one step further. In our main theorem, Theorem \ref{theorem 2}, we prove that if $\mathcal{C}$ is $\mathcal{M}^H$, namely the category of right comodules for a Hopf algebra $H$ with bijective antipode, $\mathcal{M}$ is $\mathcal{M}^B$, the category of right comodules for a coalgebra $B$, and if $Res$ in the adjunction above is moreover a functor of module categories that carries the forgetful functor to the forgetful functor then $B$ has the structure of a quotient $H$-module coalgebra and $H$ is faithfully coflat over $B$. Furthermore there exists a coideal subalgebra $A'$ of $H$ such that $\mathcal{M} \simeq  \text{Mod}_{\mathcal{C}}(A')$ and $H$ is faithfully flat over $A'$. This gives us the categorical characterization of a quantum subgroup. We finally show in \ref{converse} that quantum subgroups satisfy the conditions of Theorem \ref{theorem 2}.\\ \\


\section{Hopf Algebras} \label{Hopf Algebras}
\subsection{Definitions and notation}

We recall that if $\mathfrak{g}$ is a Lie algebra, then its universal enveloping algebra $U(\mathfrak{g})$ is a Hopf algebra. The same is true for the quantized enveloping algebra $U_q(\mathfrak{g})$ corresponding to $\mathfrak{g}$. Moreover if $G$ is the connected, simply connected Lie group with
Lie algebra $\mathfrak{g}$ then for each type 1 finite--dimensional representation, we can define matrix coefficients and
define the quantized coordinate algebra  $\mathcal{O}_q[G]$ which is again a Hopf algebra. There is a natural pairing
$( , ) : U_q(\mathfrak{g}) \times \mathcal{O}_q[G] \rightarrow k$.

Throughout this section, we let $H$ be a Hopf algebra over an algebraically closed field $k$. We denote by $\Delta_H$ the comultiplication and by $\epsilon_H$ the counit of the coalgebraic structure. $S_H$ will denote the antipode and $1_H$ the unit. We also adopt the Sweedler notation, therefore $\Delta_H(h) = h_{(1)} \otimes h_{(2)}$. In the case of coactions, if $\rho : N \rightarrow N \otimes H$, then $\rho(n) = n_{(0)} \otimes n_{(1)}$. Similarly if $N$ is left comodule with $\rho : N \rightarrow H \otimes N$, then  $\rho(n) = n_{(-1)} \otimes n_{(0)}$. 

We will be working with categories of modules and comodules, over algebras and coalgebras respectively. We will use the following general rules for the notation: $$ \mathcal{M}_B \, \, \text{denotes the category of right B-modules} $$ 
                 $$ \mathcal{M}^D \, \, \text{denotes the category of right D-comodules}$$
Similarly: 
$$ _B{\mathcal{M}} \, \, \text{denotes the category of left B-modules}  $$
$$^D{\mathcal{M}} \, \, \text{denotes the category of left D-comodules}.$$
$$$$

\begin{Def}
We say that $A$ is a \textbf{right coideal subalgebra} of $H$ if $A$ is a subalgebra which is also a right coideal of $H$, namely $\Delta_H(A) \subset A \otimes H$. 
\end{Def} Notice that this gives $A$ the structure of a right $H$-comodule.
Given a Hopf algebra $H$ and a coideal subalgebra $A$ we can consider categories carrying both a module and a comodule structure with some compatibility conditions. Following the notation that we used above, we can make the following definition :
\begin{Def}\label{H-comodules, A-modules}
By $\mathcal{M}^H_A$ we will denote the category of right $A$-modules, right $H$-comodules. The objects in this category are vector spaces $M$ such that 
\begin{enumerate}
\item $M$ is a right $A$-module, i.e. there is a module map $a : M \otimes A \rightarrow M$.
\item  $M$ is a right $H$-comodule, i.e there is a comodule map $\rho : M \rightarrow M \otimes H$.
\item The map $a : M \otimes A \rightarrow M$ is a map of $H$-comodules. 

\end{enumerate}
The morphisms are $A$-linear, $H$-colinear maps. 
\end{Def}

\begin{Def}
We say that $C$ is a \textbf{quotient  left $H$-module coalgebra} if $C$ is the quotient of $H$ by a coideal and left ideal $I$.
\end{Def}
Notice that then the induced map $ p : H \otimes C \rightarrow C$ gives $C$ the structure of an $H$-module. Given the projection map $\pi : H \rightarrow H/I = C$, we will usually denote $\pi(h)$ by $\bar{h}$.

As in the case of coideal subalgebras, given a Hopf Algebra $H$ and a quotient left $H$-module coalgebra $C$ we can define the following :
\begin{Def}
 We let $_H^C{\mathcal{M}}$ be the category of left $H$-modules, left $C$-comodules.  The objects in this category are vector spaces $M$ such that 

\begin{enumerate}
\item There is a module map $a : H \otimes M \rightarrow M$
\item There is a comodule map $ \rho : M \rightarrow C \otimes M$
\item The map $\rho : M \rightarrow C \otimes M$ is a map of $H$-modules.

\end{enumerate}
The morphisms are $H$-linear and $C$-colinear maps.

\end{Def}

 Finally we would like to recall the notions of a faithfully flat module and a faithfully coflat comodule. 
\begin{Def} A right module $M$ over an algebra $B$ is \textbf{flat} (respectively \textbf{faithfully flat}) if and only if the functor $M \otimes_B - : \, \,  _B{\mathcal{M}} \rightarrow _k{\mathcal{M}}$ preserves (respectively preserves and reflects) short exact sequences.

\end{Def}

\begin{Def} A right comodule $V$ over a coalgebra $D$ is \textbf{coflat} (respectively \textbf{faithfully coflat}) if and only if the functor$V \underset{D}{\Box} - : \, \, ^D{\mathcal{M}} \rightarrow _k{\mathcal{M}}$ preserves (respectively preserves and reflects) short exact sequences.

\end{Def}Recall that if $V \in \mathcal{M}^D$ and $W \in \,   ^D{\mathcal{M}}$ then the \textbf{cotensor product} $V \underset{D}{\Box} W$ is defined as the kernel of $$\rho_V \otimes id - id \otimes \rho_W : V \otimes W \rightarrow V \otimes D \otimes W.$$


\subsection{General results}

In this section we recall some known facts about coideal subalgebras and quotient coalgebras of Hopf algebras. We will see how the induced module categories defined previously are related when we add the necessary conditions of faithful flatness.

We will first see that any right (or left) coideal in $U$ gives rise to a right (or left) coideal subalgebra of $H$. 

\begin{proposition}[{\cite{Letzter}, Theorem 3.1}] \label{ideal gives coideal subalgebra} Let $U$ be a Hopf algebra with bijective antipode, $\mathcal{C}$ a tensor category of finite-dimensional right $U$-modules and let $H$ be the dual of $U$ with respect to $\mathcal{C}$.  If $Z \subset U$ is a right coideal of $U$, then $A \subset H$ defined by $ A : = \{ h \in H | h \cdot z = \epsilon_U(z) h \, \, \, \text{for all} \, z \in Z \}$ where  $h \cdot z$ is the restriction of the natural action of $U$ on $H$ described above, is a right coideal subalgebra. 

\end{proposition}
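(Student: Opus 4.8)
The plan is to verify the two defining conditions of a right coideal subalgebra directly: that $A$ is closed under multiplication and contains $1_H$, and that $\Delta_H(A) \subseteq A \otimes H$ (the induced right $H$-comodule structure on $A$ is then automatic, being the restriction of $\Delta_H$). Everything will come down to a handful of identities describing how the $U$-action on $H$ interacts with the Hopf structures, so the first step is to record them. Writing the natural right action of $U$ on $H$ in terms of the pairing as $h \cdot u = (u, h_{(1)})\, h_{(2)}$ --- equivalently, $(u', h \cdot u) = (uu', h)$ --- coassociativity of $\Delta_H$ together with the fact that $(\,,\,)$ is a Hopf pairing yield, for all $u \in U$ and $h, h' \in H$,
\[
1_H \cdot u = \epsilon_U(u)\, 1_H, \qquad \Delta_H(h \cdot u) = (h_{(1)} \cdot u) \otimes h_{(2)}, \qquad (hh') \cdot u = (h \cdot u_{(1)})(h' \cdot u_{(2)}).
\]
These three short computations are the only inputs needed.

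For the subalgebra property, $1_H \in A$ is immediate from the first identity, using $(z, 1_H) = \epsilon_U(z)$. For closure, let $h, h' \in A$ and $z \in Z$. Since $Z$ is a right coideal we may write $\Delta_U(z) = z_{(1)} \otimes z_{(2)}$ with $z_{(1)} \in Z$, hence $h \cdot z_{(1)} = \epsilon_U(z_{(1)})\, h$; the third identity then gives
\[
(hh') \cdot z = (h \cdot z_{(1)})(h' \cdot z_{(2)}) = \epsilon_U(z_{(1)})\, h\, (h' \cdot z_{(2)}) = h\,\bigl(h' \cdot (\epsilon_U(z_{(1)})\, z_{(2)})\bigr) = h\,(h' \cdot z) = \epsilon_U(z)\, hh',
\]
so $hh' \in A$. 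This is the only step in the whole argument that uses the hypothesis that $Z$ is a coideal.

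For the coideal property, observe that $A = \bigcap_{z \in Z} \ker \psi_z$, where $\psi_z \colon H \to H$, $\psi_z(h) = h \cdot z - \epsilon_U(z)\, h$, is $k$-linear. Because $k$ is a field, for any family of subspaces $V_z \subseteq H$ one has $(\bigcap_z V_z) \otimes H = \bigcap_z (V_z \otimes H)$ inside $H \otimes H$, and $\ker \psi_z \otimes H = \ker(\psi_z \otimes \mathrm{id}_H)$; applying this with $V_z = \ker \psi_z$ gives $A \otimes H = \bigcap_{z \in Z} \ker(\psi_z \otimes \mathrm{id}_H)$. So it suffices to check $(\psi_z \otimes \mathrm{id}_H)(\Delta_H h) = 0$ for every $h \in A$ and $z \in Z$. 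But the second identity gives
\[
(\psi_z \otimes \mathrm{id}_H)(\Delta_H h) = (h_{(1)} \cdot z) \otimes h_{(2)} - \epsilon_U(z)\, \Delta_H(h) = \Delta_H\bigl(h \cdot z - \epsilon_U(z)\, h\bigr) = \Delta_H(\psi_z h) = 0,
\]
since $h \in A$. Hence $\Delta_H(A) \subseteq A \otimes H$, and $A$ is a right coideal subalgebra of $H$.

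I do not expect a genuine obstacle: the argument is a chain of short Sweedler-notation manipulations. The two points that do require care are organisational rather than substantive. First, one must fix the convention for the action so that $\Delta_H(h \cdot u) = (h_{(1)} \cdot u) \otimes h_{(2)}$; the opposite convention $\Delta_H(h \cdot u) = h_{(1)} \otimes (h_{(2)} \cdot u)$ would instead produce a \emph{left} coideal subalgebra. Second, one should note that $h \cdot z$ stays in $H$ whenever $h \in H$: this is automatic because the action is defined through $\Delta_H$ and $H$ is closed under $\Delta_H$, so one never leaves the span of matrix coefficients of $\mathcal{C}$.
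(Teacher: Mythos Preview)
The paper does not supply its own proof of this proposition; it simply cites \cite{Letzter}, Theorem~3.1. Your argument is correct and is the standard direct verification one would expect: the three identities for the $U$-action on $H$ follow from the Hopf-pairing axioms, the closure under multiplication uses precisely that $Z$ is a right coideal (so that $z_{(1)}\in Z$), and the coideal check via $(\psi_z\otimes\mathrm{id}_H)\circ\Delta_H=\Delta_H\circ\psi_z$ is clean. Your remark about the convention for the action is apt---the paper never writes the action out explicitly, and the sides must indeed be matched so that a right coideal in $U$ yields a right coideal subalgebra of $H$.
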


Moreover, there is a one-to-one correspondence between right coideal subalgebras of a Hopf algebra $H$ and quotient left module coalgebras $C$.  
\begin{proposition}[{\cite{RelativeHopf}, Proposition 1}] \label{1-1} Let $A$ be a right coideal subalgebra of $H$ and denote $A^+ = A \cap \text{ker}\,\epsilon_H$; then $H_A = H/HA^+$ is a quotient left module coalgebra of $H$. Dually, if $ \pi : H \rightarrow C$ is a quotient left module coalgebra, then $^C{H} = \{ h \in H | \pi(h_{(1)}) \otimes h_{(2)} = \pi(1) \otimes h \} = \, ^{\text{co}C}H  $ is a right coideal subalgebra of $H$. 

\end{proposition}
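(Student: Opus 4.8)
The plan is to prove the two directions of this correspondence, checking in each case that the constructed object has the claimed structure, and then verifying that the two constructions are mutually inverse. Since the statement itself only asserts that $H_A = H/HA^+$ is a quotient left module coalgebra and that $^CH$ is a right coideal subalgebra, I will focus on these two verifications; the precise bijection (mentioned in the surrounding text) follows from the classical Hopf-algebraic arguments of Masuoka--Takeuchi and Schneider that this proposition cites.

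First, starting from a right coideal subalgebra $A$, set $A^+ = A \cap \ker\epsilon_H$ and $I = HA^+$. I would show $I$ is a coideal and a left ideal. It is a left ideal by construction. For the coideal property, note $\epsilon_H(HA^+) = 0$ since $\epsilon_H(ha) = \epsilon_H(h)\epsilon_H(a) = 0$ for $a \in A^+$. For $\Delta_H(I) \subset I \otimes H + H \otimes I$: using $\Delta_H(ha) = h_{(1)}a_{(1)} \otimes h_{(2)}a_{(2)}$ and the fact that $A$ is a right coideal so $a_{(1)} \otimes a_{(2)} \in A \otimes H$, I would write $a_{(1)} \otimes a_{(2)} = a_{(1)}\epsilon_H(a_{(2)})\otimes 1 + (a_{(1)} \otimes a_{(2)} - a_{(1)}\epsilon_H(a_{(2)}) \otimes 1)$; the first summand lies in $A^+ \otimes k$ when $a \in A^+$ (since $a_{(1)}\epsilon_H(a_{(2)}) = a$), and the second summand lies in $A \otimes \ker\epsilon_H$, which suffices after multiplying by $h_{(1)} \otimes h_{(2)}$. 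Hence $C := H/I$ is a coalgebra and the left multiplication of $H$ on itself descends to a well-defined $H$-module structure $H \otimes C \to C$ compatible with $\Delta_C$, making $C$ a quotient left $H$-module coalgebra.

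Second, starting from a quotient left $H$-module coalgebra $\pi : H \to C$, define $^CH = \{h \in H \mid \pi(h_{(1)}) \otimes h_{(2)} = \pi(1) \otimes h\}$. That this equals the set of coinvariants $^{\mathrm{co}C}H$ for the left $C$-coaction $h \mapsto \pi(h_{(1)}) \otimes h_{(2)}$ on $H$ is the definition. I would check it is a subalgebra: $1 \in {^CH}$ trivially, and if $g, h \in {^CH}$ then $\pi((gh)_{(1)}) \otimes (gh)_{(2)} = \pi(g_{(1)}h_{(1)}) \otimes g_{(2)}h_{(2)} = g_{(1)}\pi(h_{(1)}) \otimes g_{(2)}h_{(2)}$ using that $\pi$ is an $H$-module map ($\pi(g_{(1)}h_{(1)}) = g_{(1)}\pi(h_{(1)})$), and then applying $h \in {^CH}$ gives $g_{(1)}\pi(1) \otimes g_{(2)}h = \pi(g_{(1)}) \otimes g_{(2)}h$, and finally $g \in {^CH}$ yields $\pi(1) \otimes gh$. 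For the right coideal property $\Delta_H({^CH}) \subset {^CH} \otimes H$, I would take $h \in {^CH}$ and apply $\pi \otimes \mathrm{id} \otimes \mathrm{id}$ to $\Delta_H^{(2)}(h) = h_{(1)} \otimes h_{(2)} \otimes h_{(3)}$, using coassociativity and the defining condition applied to $h_{(1)} \otimes h_{(2)}$, to conclude each $h_{(1)}$ (in the leg decomposition) lies in $^CH$.

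The main obstacle I anticipate is the coideal verification for $HA^+$: one must be careful that $A$ is only a \emph{right} coideal, not a subcoalgebra, so the decomposition of $\Delta_H(a)$ for $a \in A^+$ into a piece in $A^+ \otimes k$ plus a piece in $A \otimes \ker\epsilon_H$ has to be done precisely, and then one must track how left-multiplication by $\Delta_H(h)$ keeps both pieces inside $HA^+ \otimes H + H \otimes HA^+$. The mutual-inverse claim — that $^{C}(H)$ recovers $A$ when $C = H_A$, and $H/H({^CH})^+ \cong C$ — is the genuinely substantive part and is exactly where faithful flatness of $H$ over $A$ (equivalently faithful coflatness over $C$) enters; since the proposition is quoted from \cite{RelativeHopf}, I would cite that source for this equivalence rather than reproving it, and only sketch why the two constructions are formally adjoint: $A \mapsto H/HA^+$ and $\pi \mapsto {^CH}$ are order-reversing maps between the posets of coideal subalgebras and quotient module coalgebras, and the unit and counit of this Galois-type connection are isomorphisms precisely under the faithful (co)flatness hypothesis.
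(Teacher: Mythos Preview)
The paper does not actually prove this proposition; it is quoted directly from Takeuchi \cite{RelativeHopf} without argument. So there is no ``paper's own proof'' to compare against, and your outline is essentially the standard verification one would give.

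That said, there is a genuine slip in your coideal check for $I = HA^{+}$. You decompose $\Delta_H(a)$ for $a \in A^{+}$ as
\[
\Delta_H(a) \;=\; a \otimes 1 \;+\; \bigl(\Delta_H(a) - a \otimes 1\bigr),
\]
observing that the first summand lies in $A^{+} \otimes k$ and the second in $A \otimes \ker\epsilon_H$. The problem is the second summand: after multiplying by $h_{(1)} \otimes h_{(2)}$ it lands in $HA \otimes H\ker\epsilon_H \subset H \otimes \ker\epsilon_H$, which has no reason to be contained in $HA^{+} \otimes H + H \otimes HA^{+}$ unless $\ker\epsilon_H = HA^{+}$. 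You have split the wrong tensor leg. Since $A$ is a \emph{right} coideal, it is the \emph{first} leg of $\Delta_H(a)$ that lies in $A$, and that is the leg you should split along $A = k \cdot 1 \oplus A^{+}$. Writing $\Delta_H(a) = \sum a_i \otimes h_i$ with $a_i \in A$ and decomposing each $a_i = \epsilon_H(a_i)1 + (a_i - \epsilon_H(a_i)1)$ gives
\[
\Delta_H(a) \;=\; 1 \otimes a \;+\; \sum \bigl(a_i - \epsilon_H(a_i)1\bigr) \otimes h_i
\;\in\; k \otimes A^{+} \;+\; A^{+} \otimes H,
\]
using $\sum \epsilon_H(a_i) h_i = a$. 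Now multiplying by $\Delta_H(h)$ does give an element of $H \otimes HA^{+} + HA^{+} \otimes H$, as required. The remainder of your argument (the subalgebra and right-coideal checks for ${}^{C}H$, and your remarks on the Galois connection and where faithful (co)flatness enters) is correct.
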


Combining Propositions \ref{ideal gives coideal subalgebra} and \ref{1-1} we can deduce that whenever we are given a coideal in $U$ we can define a coideal subalgebra of its dual Hopf algebra $H$ or equivalently a quotient left $H$-module coalgebra. Let us see now how the conditions of faithful flatness fit into the picture. 


\begin{theorem} [{\cite{RelativeHopf}, Theorem 2}] Let $H$ be a Hopf algebra and $C$ a quotient left module coalgebra; then $^C{H}$ as defined in Proposition \ref{1-1} is a right coideal subalgebra. Suppose that $H$ is faithfully coflat as a right $C$-comodule. Then 
 $_{^C{H}}{\mathcal{M}} \simeq _H^C{\mathcal{M}}$ and $H$ is faithfully flat as a right $A$-module. 
\end{theorem}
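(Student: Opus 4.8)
Throughout put $A:={}^{C}H$; by Proposition \ref{1-1} this is a right coideal subalgebra of $H$, which settles the first assertion, and moreover $H$ acquires a left $C$-comodule structure $h\mapsto\pi(h_{(1)})\otimes h_{(2)}$ (where $\pi\colon H\to C$ is the projection and $\bar 1:=\pi(1_{H})$ is a grouplike element of $C$) for which $A=\{h\in H:\pi(h_{(1)})\otimes h_{(2)}=\bar 1\otimes h\}=H^{\mathrm{co}\,C}$. The plan is: (i) build an adjunction between ${}_{A}\mathcal{M}$ and ${}_{H}^{C}\mathcal{M}$ out of an ``induction'' functor $N\mapsto H\otimes_{A}N$ and a ``coinvariants'' functor $M\mapsto M^{\mathrm{co}\,C}$; (ii) use faithful coflatness to upgrade this adjunction to an equivalence, establishing faithful flatness of $H$ over $A$ along the way; and (iii) observe that faithful flatness of $H$ over $A$ also drops out of the equivalence by a formal argument.

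For (i): given $N\in{}_{A}\mathcal{M}$, equip $G(N):=H\otimes_{A}N$ with the left $H$-action by multiplication on the first tensorand and the left $C$-coaction $h\otimes_{A}n\mapsto\pi(h_{(1)})\otimes h_{(2)}\otimes_{A}n$. One checks this coaction is well defined modulo $\otimes_{A}$ — precisely here one uses that elements of $A$ are $C$-coinvariant and that $\pi$ is left $H$-linear — coassociative and counital (since $\pi$ is a coalgebra map), and $H$-linear for the diagonal action on $C\otimes(H\otimes_{A}N)$, so $G(N)\in{}_{H}^{C}\mathcal{M}$. Given $M\in{}_{H}^{C}\mathcal{M}$, let $F(M):=M^{\mathrm{co}\,C}=\{m\in M:\rho_{M}(m)=\bar 1\otimes m\}$, an $A$-submodule of $M$ because $\rho_{M}$ is $H$-linear and elements of $A$ are coinvariant. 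Taking the counit to be the action map $\varepsilon_{M}\colon H\otimes_{A}M^{\mathrm{co}\,C}\to M$, $h\otimes_{A}m\mapsto h\cdot m$, and the unit to be $\eta_{N}\colon N\to(H\otimes_{A}N)^{\mathrm{co}\,C}$, $n\mapsto 1_{H}\otimes_{A}n$ (which lands in the coinvariants), naturality and the triangle identities are routine, so $(G,F)$ is an adjoint pair and everything reduces to showing $\eta$ and $\varepsilon$ are isomorphisms.

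Step (ii) is the technical heart, and the step I expect to be the main obstacle. Write $M^{\mathrm{co}\,C}=k\,\underset{C}{\Box}\,M$ for $k$ the trivial right $C$-comodule $1\mapsto 1\otimes\bar 1$, so that $k\,\underset{C}{\Box}\,H=A$. The hypothesis says the functor $H\,\underset{C}{\Box}\,(-)$ on left $C$-comodules is faithfully exact, hence reflects isomorphisms; the strategy is to reduce invertibility of $\varepsilon$ and $\eta$ to statements that become transparent after applying it. The substantive intermediate result — and, I believe, the crux of \cite{RelativeHopf} — is that faithful coflatness of $H$ over $C$ forces (a) $H$ to be faithfully flat as a right $A$-module and (b) the \emph{canonical map} $H\otimes_{A}H\to H\,\underset{C}{\Box}\,H$, $g\otimes_{A}h\mapsto gh_{(1)}\otimes\pi(h_{(2)})$, to be bijective (surjectivity is elementary via the antipode; injectivity is the real point and comes from a cotensor/$\mathrm{Tor}$ argument intertwined with (a)). Granted (a) and (b), the equivalence ${}_{A}\mathcal{M}\simeq{}_{H}^{C}\mathcal{M}$ is faithfully flat descent along $A\hookrightarrow H$: a descent datum on an $H$-module is rewritten, via the canonical isomorphism of (b), as a compatible $C$-comodule structure, and under this dictionary $\varepsilon$ and $\eta$ become the descent counit and unit, which are isomorphisms because $A\hookrightarrow H$ is faithfully flat. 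One must take care to match the left and right comodule sides correctly along the way; this bookkeeping — routine in isolation — is where the Hopf-algebra axioms and the \emph{faithfulness} of the (co)flatness genuinely enter.

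For (iii): granted the equivalence, faithful flatness of $H$ over $A$ follows formally. The functor $H\otimes_{A}(-)\colon{}_{A}\mathcal{M}\to{}_{k}\mathcal{M}$ is the composite of $G\colon{}_{A}\mathcal{M}\xrightarrow{\ \sim\ }{}_{H}^{C}\mathcal{M}$ with the forgetful functor ${}_{H}^{C}\mathcal{M}\to{}_{k}\mathcal{M}$. An equivalence of abelian categories preserves and reflects short exact sequences, and so does the forgetful functor, since kernels and cokernels in ${}_{H}^{C}\mathcal{M}$ are computed on underlying vector spaces; hence the composite $H\otimes_{A}(-)$ does too, which is precisely the statement that $H$ is faithfully flat as a right $A$-module.
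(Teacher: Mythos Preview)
The paper does not supply its own proof of this theorem: it is quoted verbatim as \cite{RelativeHopf}, Theorem~2, and used as a black box. There is therefore nothing in the paper to compare your argument against.

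That said, your outline is essentially Takeuchi's own strategy in \cite{RelativeHopf}: set up the adjunction $(H\otimes_{A}-,\,(-)^{\mathrm{co}\,C})$ between ${}_{A}\mathcal{M}$ and ${}_{H}^{C}\mathcal{M}$, and then use faithful coflatness of $H$ over $C$ to show unit and counit are isomorphisms, with faithful flatness over $A$ emerging en route. Two remarks on the sketch itself. First, in step~(ii) your ``canonical map'' is misstated: the formula $g\otimes_{A}h\mapsto gh_{(1)}\otimes\pi(h_{(2)})$ lands in $H\otimes C$, not in $H\,\underset{C}{\Box}\,H\subset H\otimes H$; what you have written is the Galois map $\beta\colon H\otimes_{A}H\to H\otimes C$, whose bijectivity is indeed the relevant ``cleft/Galois'' condition, but the target must be corrected. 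Second, you correctly flag that step~(ii) is where all the content lies and leave the proof of (a) and (b) as a pointer to a ``cotensor/$\mathrm{Tor}$ argument''; that is honest, but it means the proposal is a roadmap rather than a proof. Step~(iii) is a clean observation, though note it is logically redundant once (a) has been established directly in step~(ii).
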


\begin{theorem} [{\cite{Masuoka&Wigner}, Theorem 2.1}] \label{MW} Let $H$ be a Hopf algebra with bijective antipode and $A \subset H$ a right coideal subalgebra. Let $H_A$ be (as in Proposition \ref{1-1})  the corresponding quotient left module coalgebra. Then the following are equivalent:
\begin{enumerate} 
\item $H$ is faithfully coflat as a left $H_A$-comodule.
\item $H$ is faithfully flat as a left $A$-module.
\item There is an equivalence of categories $\mathcal{M}^H _ A \simeq \mathcal{M}^{H_A}$. 
\end{enumerate}
\end{theorem}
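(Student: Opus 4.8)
The plan is to establish the cycle $(2)\Rightarrow(3)\Rightarrow(1)\Rightarrow(2)$, with $(2)\Rightarrow(3)$ as the substantive step; I read $(3)$ as asserting that the canonical comparison functor is an equivalence, which is the intended meaning. Write $A^{+}=A\cap\ker\epsilon_{H}$, $H_{A}=H/HA^{+}$, $\pi\colon H\to H_{A}$, $\bar h=\pi(h)$. Two elementary identities, both immediate from $\Delta_{H}(A)\subseteq A\otimes H$ and $\pi(HA^{+})=0$, will be used constantly: for $a\in A$, $h\in H$,
\[
\pi(ha)=\epsilon_{H}(a)\,\bar h,\qquad \bar a_{(1)}\otimes a_{(2)}=\bar 1\otimes a ,
\]
the second saying $A\subseteq{}^{\mathrm{co}H_{A}}H$. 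Recall also that $H$ is at once a right $H$-comodule (via $\Delta_{H}$) and a left $H_{A}$-comodule (via $(\pi\otimes\mathrm{id})\Delta_{H}$), compatibly, as well as a left and a right $A$-module by multiplication.

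I would first assemble the candidate quasi-inverse equivalences. Define $\Psi\colon\mathcal{M}^{H}_{A}\to\mathcal{M}^{H_{A}}$ by $\Psi(M)=M/MA^{+}=M\otimes_{A}k$; using the first identity one checks that $M\xrightarrow{\rho_{M}}M\otimes H\to (M/MA^{+})\otimes H_{A}$ annihilates $MA^{+}$ (since $\overline{ma}=\epsilon_{H}(a)\bar m$ and $\pi(m_{(1)}a_{(1)})=\epsilon_{H}(a_{(1)})\bar m_{(1)}$ for $a\in A$), so $\Psi(M)$ acquires the right $H_{A}$-coaction $\bar m\mapsto\bar m_{(0)}\otimes\bar m_{(1)}$. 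Define $\Phi\colon\mathcal{M}^{H_{A}}\to\mathcal{M}^{H}_{A}$ by $\Phi(V)=V\,\Box_{H_{A}}H$, with $H$ a left $H_{A}$-comodule; it carries the right $H$-coaction induced by $\Delta_{H}$ on the $H$-factor and a right $A$-action by right multiplication on the $H$-factor, the latter preserving the cotensor precisely because of the second identity. There is a natural adjunction $\Psi\dashv\Phi$ with unit $\eta_{M}\colon M\to\Phi\Psi(M)$, $m\mapsto\bar m_{(0)}\otimes m_{(1)}$, and counit $\varepsilon_{V}\colon\Psi\Phi(V)\to V$, $\overline{\sum v_{i}\otimes h_{i}}\mapsto\sum v_{i}\epsilon_{H}(h_{i})$; verifying that these are morphisms in the respective categories and satisfy the triangle identities is routine Sweedler bookkeeping. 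Thus $(3)$ amounts to showing $\eta$ and $\varepsilon$ are isomorphisms.

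The core of the argument is $(2)\Rightarrow(3)$, i.e.\ faithfully flat descent. On suitable cofree generators the comparison maps are isomorphisms by direct computation: $\Phi(X\otimes H_{A})\cong X\otimes H$ canonically and $\varepsilon$ reduces to the counit isomorphism $H_{A}\,\Box_{H_{A}}H\cong H$, and dually for $\eta$ on the cofree objects of $\mathcal{M}^{H}_{A}$. To reach arbitrary objects one writes a presentation by such (co)free objects, applies $\Phi$, $\Psi$ together with the (exact, faithful) forgetful functors, and uses that $H$ is faithfully flat over $A$ to conclude that the comparison of the resulting exact sequences forces $\eta_{M}$ and $\varepsilon_{V}$ to be isomorphisms; this also upgrades $A\subseteq{}^{\mathrm{co}H_{A}}H$ to an equality. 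The bijective antipode enters here through the standard "untwisting" isomorphisms $N\otimes H\cong N\otimes H$, $n\otimes h\mapsto n_{(0)}\otimes n_{(1)}h$, whose inverses involve $S_{H}^{-1}$ and which trade the diagonal structures on relative Hopf modules for structures supported on a single tensor factor. The same scheme with $\Box_{H_{A}}$ in place of $\otimes_{A}$ shows $(1)\Rightarrow(3)$.

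Finally $(3)\Rightarrow(1)$: if the comparison functor is an equivalence then so is $\Phi$, which is in particular exact and faithful; composing with the exact faithful forgetful functor $\mathcal{M}^{H}_{A}\to{}_{k}\mathcal{M}$ shows that $V\mapsto V\,\Box_{H_{A}}H$ (as a vector space) is exact and faithful, i.e.\ $H$ is faithfully coflat as a left $H_{A}$-comodule. For $(1)\Rightarrow(2)$ I would invoke the preceding theorem (\cite{RelativeHopf}, Theorem~2) together with Proposition~\ref{1-1}: faithful coflatness again forces $A={}^{\mathrm{co}H_{A}}H$, whereupon that theorem delivers faithful flatness of $H$ over $A$, the bijective antipode being used only to match the left/right conventions of the two statements. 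I expect the only real obstacle to be the descent step of the third paragraph: constructing workable (co)free presentations inside $\mathcal{M}^{H}_{A}$, whose objects carry two interlocking structures, and checking that faithful flatness genuinely propagates the isomorphism from (co)free objects to all of $\mathcal{M}^{H}_{A}$ and $\mathcal{M}^{H_{A}}$.
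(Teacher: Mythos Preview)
The paper does not supply its own proof of this theorem; it is quoted verbatim from Masuoka--Wigner \cite{Masuoka&Wigner} and stated without argument, so there is nothing in the paper to compare your attempt against.

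That said, your sketch is essentially the standard proof one finds in the cited source and in Takeuchi's \cite{RelativeHopf}: set up the adjoint pair $(\Psi,\Phi)=\bigl((-)\otimes_{A}k,\ -\,\Box_{H_{A}}H\bigr)$ between $\mathcal{M}^{H}_{A}$ and $\mathcal{M}^{H_{A}}$, verify the unit and counit are isomorphisms on (co)free objects, and use faithful (co)flatness to descend to arbitrary objects. The soft spots you already identify are the genuine ones. For the descent step you will want an explicit resolution inside $\mathcal{M}^{H}_{A}$: every $M$ embeds via $\eta_{M}$ into $\Phi\Psi(M)$, and iterating gives a two-term cofree resolution on which the comparison is known; faithful flatness of $(-)\otimes_{A}k$ then forces $\eta_{M}$ to be an isomorphism (and dually for $\varepsilon_{V}$). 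For $(1)\Rightarrow(2)$ your appeal to the preceding theorem from \cite{RelativeHopf} is correct in spirit, but note that theorem yields faithful flatness on the \emph{right}; the passage to the left side genuinely uses the bijective antipode (via $a\mapsto S^{-1}(a)$ turning right $A$-modules into left ones), so this is not merely a convention issue and should be made explicit.
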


From the above we conclude the following:
\begin{corollary} \label{cor1-1} [{\cite{Masuoka}, Theorem 1.11}] :  Let $H$ be a Hopf algebra with bijective antipode. Then $ A \mapsto H_A$, $C \mapsto \,  ^C{H}$ give a one-to-one correspondence between the right coideal subalgebras of $H$ over which $H$ is left faithfully flat and the quotient left $H$-module coalgebras of $H$ over which $H$ is left faithfully coflat. 
\end{corollary}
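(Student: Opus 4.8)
The plan is to obtain the corollary by assembling Proposition~\ref{1-1}, Theorem~\ref{MW}, and the theorem of \cite{RelativeHopf} recalled just before Theorem~\ref{MW}. By Proposition~\ref{1-1} the assignments $A \mapsto H_A = H/HA^+$ and $C \mapsto {}^C H$ are defined on \emph{all} right coideal subalgebras, respectively on \emph{all} quotient left $H$-module coalgebras; the corollary asserts two further things, which I would prove separately. First, well-definedness on the restricted families: that $A\mapsto H_A$ sends a right coideal subalgebra over which $H$ is left faithfully flat to a quotient coalgebra over which $H$ is left faithfully coflat, and that $C \mapsto {}^C H$ goes the other way. Second, that on these restricted families the two assignments are mutually inverse.

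For the first point, if $H$ is left faithfully flat over the right coideal subalgebra $A$, then $H_A$ is a quotient left $H$-module coalgebra by Proposition~\ref{1-1}, and $H$ is left faithfully coflat over $H_A$ by the implication $(2)\Rightarrow(1)$ of Theorem~\ref{MW}. Conversely, if $H$ is left faithfully coflat over the quotient coalgebra $C$, then ${}^C H$ is a right coideal subalgebra by Proposition~\ref{1-1}, and $H$ is left faithfully flat over ${}^C H$ by the left-handed version of the theorem of \cite{RelativeHopf} stated above; this left version is available precisely because $S_H$ is bijective, so one may pass to an opposite Hopf algebra to exchange the roles of left and right. Thus both assignments respect the restricted families.

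It remains to show the two maps are mutually inverse there, and this is the substantive step. In each direction there is a canonical comparison morphism, present with no flatness hypothesis: if $C = H/I$ with $I$ a coideal and left ideal and $A = {}^C H$, then applying $\mathrm{id}\otimes\epsilon_H$ to the defining relation of ${}^C H$ gives $\pi(a) = \epsilon_H(a)\,\pi(1)$ for $a\in A$, hence $A^+\subseteq I$, and since $I$ is a left ideal $HA^+\subseteq I$, yielding a surjection of coalgebras $H_A = H/HA^+ \twoheadrightarrow H/I = C$; in the other direction $\Delta_H(A)\subseteq A\otimes H$ together with the vanishing of $A^+$ in $H_A$ gives $A\subseteq {}^{H_A}H$ immediately. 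The content of the corollary — and the main obstacle — is that left faithful coflatness of $H$ over $C$ forces the first map to be an isomorphism, while left faithful flatness of $H$ over $A$ forces the second inclusion to be an equality; this is exactly \cite{Masuoka}, Theorem~1.11. I would derive it by transporting the object $H$, equipped with its tautological $A$-module and $H$-comodule structure, through the equivalence $\mathcal{M}^H_A \simeq \mathcal{M}^{H_A}$ of Theorem~\ref{MW} (and through the left-module analogue coming from the \cite{RelativeHopf} theorem) and comparing $H$-coinvariants on the two sides; equivalently, by faithfully flat descent, which identifies $A$ with the cotensor $H\,\underset{H_A}{\Box}\,k$ as an algebra, so that $A = {}^{H_A}H$, and dually on the coalgebra side. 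Once this identification is in place, combining it with the well-definedness statements above gives the asserted one-to-one correspondence; the only remaining care is the bookkeeping of left–right conventions, handled uniformly by bijectivity of the antipode.
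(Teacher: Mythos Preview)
The paper does not actually prove this corollary: it is stated with a citation to \cite{Masuoka}, and the phrase ``From the above we conclude the following'' merely signals that it is the expected packaging of Proposition~\ref{1-1}, the \cite{RelativeHopf} theorem, and Theorem~\ref{MW}. Your assembly of these ingredients is the intended one and is correct in outline.

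Two small points. First, your cotensor identification is on the wrong side: the left $H_A$-coinvariants ${}^{H_A}H$ are $k\,\underset{H_A}{\Box}\,H$, not $H\,\underset{H_A}{\Box}\,k$. Second, the phrase ``comparing $H$-coinvariants on the two sides'' is slightly off; what one compares are $H_A$-coinvariants. Concretely, the equivalence $\mathcal{M}^{H_A}\simeq \mathcal{M}^H_A$ of Theorem~\ref{MW} is given by $V\mapsto V\,\underset{H_A}{\Box}\,H$ with quasi-inverse $M\mapsto M/MA^+$; applying the first to the trivial $H_A$-comodule $k$ gives ${}^{H_A}H$, while applying the second to $A\in\mathcal{M}^H_A$ gives $A/A^+\cong k$, so the unit of the equivalence at $A$ is exactly the inclusion $A\hookrightarrow {}^{H_A}H$, hence an isomorphism. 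The dual computation, using the \cite{RelativeHopf} equivalence under left faithful coflatness over $C$, gives $H_{{}^C H}\cong C$. With those clarifications your argument goes through; the left--right bookkeeping via bijectivity of $S_H$ is handled as you indicate.
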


\subsection{Semisimplicity and cosemisimplicity} \label{ss and cs}

The notion of faithful flatness (respectively faithful coflatness) is an important condition when we want to work with the quantum analogue of an affine homogeneous space. \\ In this direction, we will look at an important result from \cite{Muller&Schneider} that connects some semisimplicity (respectively cosemisimplicity) conditions with the faithful flatness (respectively faithful coflatness) notion that we are looking for. In their work  M\"uller and Schneider use a slightly different approach which we briefly recall here:

Let us start with a Hopf algebra $U$ with bijective antipode and a left coideal subalgebra $K$. Let $\mathcal{C}$ be the tensor category of finite-dimensional left $U$-modules and let $H$ be the dual of $U$ with respect to $\mathcal{C}$. Then $$A := \{ h \in H | h \cdot K^+ =0 \}$$ is a right coideal subalgebra of $H$. Then (as before) $ C = H/HA^+$  is a left module quotient coalgebra and the results from the previous section hold.

\begin{Def} \label{def of C-semisimple} Let $\mathcal{C}$ be a tensor category of left $U$-modules. A subalgebra $K \subset U$ is called $\mathcal{C}$\textbf{-semisimple} if all $V \in \mathcal{C}$ are semisimple as left $K$-modules (by restriction).

\end{Def}

\begin{theorem} [{\cite{Muller&Schneider}, Theorem 2.2}] \label{C-semisimple}  Let $U$ be a Hopf algebra, $K \subset U$ a left coideal subalgebra and $\mathcal{C}$ a tensor category of finite-dimensional left $U$-modules. Let $H$ be the dual Hopf algebra with respect to $\mathcal{C}$, $A := \{ h \in H | h \cdot K^+ =0 \}$ and $C= H/HA^+$. Assume that the antipode of $H$ is bijective. Then:
\begin{enumerate}
\item $A \subset H$ is a right coideal subalgebra with $A = ^{\text{co}C}H$.
\item If $K$ is $\mathcal{C}$-semisimple then $C$ is cosemisimple and $H$ is faithfully flat as a left and right $A$-module.

\end{enumerate}

\end{theorem}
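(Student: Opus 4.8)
The plan is to reduce the statement to the Hopf-algebraic results already collected in the previous subsection, so that the only genuinely new work is showing that $\mathcal{C}$-semisimplicity of $K$ forces $H$ to be faithfully coflat over $C$. For part (1), I would first verify that $A$ is indeed a right coideal subalgebra: since $H$ is the dual of $U$ with respect to $\mathcal{C}$, the natural $U$-action on $H$ is compatible with multiplication and comultiplication, so $A = \{h \in H \mid h\cdot K^+ = 0\}$ is closed under multiplication and its comultiplication lands in $A\otimes H$; this is essentially Proposition \ref{ideal gives coideal subalgebra} applied with $Z$ the right coideal generated suitably by $K^+$ (translating between left and right conventions via the bijective antipode). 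The identity $A = {}^{\mathrm{co}C}H$ where $C = H/HA^+$ is then exactly the content of Proposition \ref{1-1}, which establishes the bijection $A \mapsto H_A$ and $\pi \mapsto {}^C H$; one checks that the $C$ built here coincides with $H_A$ and then reads off $A = {}^C H = {}^{\mathrm{co}C}H$.

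For part (2), the heart of the matter is cosemisimplicity of $C$. Here I would use the duality between $\mathcal{C}$ and $H$: a right $H$-comodule that lies in $\mathcal{C}$ (i.e.\ is finite-dimensional and arises from the pairing) corresponds to a left $U$-module, and restricting the $H$-coaction along $\pi : H \to C$ corresponds to restricting the $U$-action to $K$. The hypothesis that $K$ is $\mathcal{C}$-semisimple says every such module is semisimple over $K$; dualizing, every $C$-comodule obtained this way is cosemisimple. One then needs to promote this to: \emph{every} $C$-comodule is a sum of simple subcomodules. The standard device is that any comodule is the directed union of its finite-dimensional subcomodules, and every finite-dimensional $C$-comodule is a quotient (equivalently, by duality, a sub) of a direct sum of copies of $C$ itself viewed as a comodule, together with the fact that $C$ is spanned by matrix coefficients coming from $\mathcal{C}$. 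So it suffices to know $C$, as a right comodule over itself, is cosemisimple, which follows once one sees $C$ decomposes according to the isotypic pieces of the $U$-modules in $\mathcal{C}$ restricted to $K$. I would lean on the Peter--Weyl-type decomposition of $H$ afforded by the duality with $\mathcal{C}$ and push it through the quotient $\pi$.

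Once $C$ is cosemisimple, faithful coflatness of $H$ over $C$ is close to automatic: a cosemisimple coalgebra $C$ has the property that \emph{every} $C$-comodule is coflat and in fact faithfully coflat, because short exact sequences of $C$-comodules split, so the cotensor functor $H \,\Box_C -$ is exact, and it reflects exactness because $H$ contains $C$ as a subcomodule (via the coaction), hence detects nonzero comodules. With faithful coflatness of $H$ over $C$ in hand, Theorem 2 of \cite{RelativeHopf} (the first displayed theorem of the "General results" subsection) immediately gives that $H$ is faithfully flat as a right $A$-module, and then Theorem \ref{MW} (equivalence of the three conditions, using that the antipode of $H$ is bijective) upgrades this to faithful flatness of $H$ as a left $A$-module as well, yielding the two-sided statement.

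The main obstacle I anticipate is the cosemisimplicity step: translating "$K$-semisimple for all $V \in \mathcal{C}$" into "$C$ cosemisimple" requires being careful about (a) the left/right module-versus-comodule bookkeeping, since $K$ is a \emph{left} coideal subalgebra of $U$ while $A$ is a \emph{right} coideal subalgebra of $H$ and $C$ is a \emph{quotient left} module coalgebra, and these switches are mediated by the (bijective) antipode; and (b) extending a semisimplicity statement about the finite-dimensional objects of $\mathcal{C}$ to \emph{all} comodules over $C$, which needs the fact that $C$ is generated by matrix coefficients of objects in $\mathcal{C}$ so that no "new" simple comodules appear outside the reach of the hypothesis. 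Everything after "$C$ is cosemisimple" is a formal consequence of the theorems already quoted.
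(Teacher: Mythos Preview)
The paper does not supply its own proof of this theorem: it is stated as a result quoted from M\"uller--Schneider \cite{Muller&Schneider}, and immediately after the statement the paper writes ``For the full statement of the above theorem we refer to \cite{Muller&Schneider}, Theorem 2.2.'' There is therefore nothing in the paper to compare your proposal against.

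That said, a brief comment on your outline. Your strategy for part (2) --- show $C$ is cosemisimple via the duality between $K$-modules in $\mathcal{C}$ and $C$-comodules, then deduce faithful coflatness of $H$ over $C$ from cosemisimplicity, then invoke the Takeuchi and Masuoka--Wigner theorems already quoted --- is the right shape and is essentially how M\"uller--Schneider proceed. For part (1), be careful: the equality $A = {}^{\mathrm{co}C}H$ is not an immediate consequence of Proposition \ref{1-1} alone. That proposition gives the two maps $A \mapsto H_A$ and $C \mapsto {}^{C}H$, but the assertion that these are mutually inverse (so that $A = {}^{\mathrm{co}(H_A)}H$) is the content of Corollary \ref{cor1-1}, which already assumes faithful flatness. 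In general only the inclusion $A \subseteq {}^{\mathrm{co}C}H$ is automatic; M\"uller--Schneider establish the reverse inclusion directly from the construction of $A$ in terms of $K$ and the pairing, rather than by appealing to the abstract bijection. Your sketch glosses over this and would be circular as written.
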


For the full statement of the above theorem we refer to \cite{Muller&Schneider}, Theorem 2.2.

\section{Quantum subgroups -- A definition} \label{def of qs}

\begin{Def} Let $H$ be a Hopf algebra with bijective antipode. A \textbf{quantum homogeneous space} is a right coideal subalgebra $A$ of $H$ over which $H$ is faithfully flat. 
\end{Def}

This definition agrees with the classical picture. Indeed, it is known that a flat morphism of commutative rings $f: R \rightarrow S$ is faithfully flat if and only if the dual map $f* : \text{Spec}S \rightarrow \text{Spec}R$ is surjective (see for example \cite{matsumura} Theorem 7.3). In the same spirit is Lemma 2.2 of \cite{Richardson}: 

\begin{lemma}  Let $\phi: X \rightarrow Y$ be a surjective morphism of smooth irreducible affine algebraic varieties and let $s=\text{dim}X-\text{dim}Y$.  Assume that for every $y \in Y$, each irreducible component of the fibre $\phi^{-1}(y)$ is $s$-dimensional. Then $\mathcal{O}(X)$ is a fuithfully flat $\mathcal{O}(Y)$-module.
\end{lemma}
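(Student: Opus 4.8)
The plan is to separate the statement into \emph{flatness} and \emph{surjectivity}: for finitely generated $k$-algebras, a homomorphism $\mathcal{O}(Y) \to \mathcal{O}(X)$ is faithfully flat exactly when it is flat and the induced map $\text{Spec}\,\mathcal{O}(X) \to \text{Spec}\,\mathcal{O}(Y)$ is surjective (cf.\ \cite{matsumura}). Surjectivity on \emph{all} primes is then a formal consequence of the surjectivity of $\phi$ on closed points together with flatness: a flat morphism that is locally of finite presentation is open, so its image is an open subset of the Jacobson scheme $\text{Spec}\,\mathcal{O}(Y)$ containing every closed point, hence equals the whole space. Thus the entire content of the lemma is the assertion that $\mathcal{O}(X)$ is flat over $\mathcal{O}(Y)$.

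For flatness I would invoke the local flatness criterion in the regular/Cohen--Macaulay situation (``miracle flatness''): if $R \to S$ is a local homomorphism of Noetherian local rings with $R$ regular, $S$ Cohen--Macaulay, and $\dim S = \dim R + \dim\!\big(S/\mathfrak{m}_R S\big)$, then $S$ is flat over $R$. Since flatness of $\mathcal{O}(X)$ as an $\mathcal{O}(Y)$-module may be checked locally and the flat locus is open, it is enough to verify that $\mathcal{O}_{X,x}$ is flat over $\mathcal{O}_{Y,y}$ at every closed point $x \in X$, where $y := \phi(x)$ (a closed point of $Y$, since $\phi$ maps closed points to closed points). Smoothness of $Y$ makes $\mathcal{O}_{Y,y}$ regular; smoothness of $X$ makes $\mathcal{O}_{X,x}$ regular, hence Cohen--Macaulay; and $\mathcal{O}_{Y,y} \to \mathcal{O}_{X,x}$ is a local homomorphism because $\phi(x) = y$. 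So only the dimension identity must be checked.

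This is where irreducibility and the fibre hypothesis enter. Since $X$ and $Y$ are irreducible, $\dim \mathcal{O}_{X,x} = \dim X$ and $\dim \mathcal{O}_{Y,y} = \dim Y$. The fibre $\phi^{-1}(y)$ has local ring at $x$ equal to $\mathcal{O}_{X,x}/\mathfrak{m}_y\mathcal{O}_{X,x}$, whose Krull dimension is the maximum of $\dim Z$ over the irreducible components $Z$ of $\phi^{-1}(y)$ passing through $x$; by hypothesis every such $Z$ has $\dim Z = s$, so this dimension equals $s = \dim X - \dim Y$. Therefore
\[
\dim \mathcal{O}_{X,x} = \dim X = \dim Y + s = \dim \mathcal{O}_{Y,y} + \dim\!\big(\mathcal{O}_{X,x}/\mathfrak{m}_y\mathcal{O}_{X,x}\big),
\]
and miracle flatness yields flatness of $\mathcal{O}_{X,x}$ over $\mathcal{O}_{Y,y}$. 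Letting $x$ vary over $X$ gives flatness of $\mathcal{O}(X)$ over $\mathcal{O}(Y)$, and combined with the first paragraph this gives faithful flatness.

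I do not expect a genuine obstacle here once miracle flatness is in hand: the hypotheses are precisely engineered to feed into it, and the only points requiring care are the local-to-global reduction for flatness (openness of the flat locus plus the Jacobson property) and the standard identification of the dimension of a fibre component with the dimension of the corresponding local ring. It is worth emphasising that pure-dimensionality of \emph{every} fibre is essential rather than cosmetic --- it is exactly what validates the dimension formula at each $x$ --- and that simple examples such as a blow-up, where one fibre jumps in dimension, show that surjectivity of a morphism between smooth irreducible varieties by itself does not force flatness.
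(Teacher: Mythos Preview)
Your argument via miracle flatness is correct and is the standard route to this result. Note, however, that the paper does not supply its own proof: the lemma is quoted as Lemma~2.2 of \cite{Richardson} and used only as motivation for the definition of a quantum homogeneous space, so there is no in-paper argument to compare against. Your write-up would serve perfectly well as the missing proof; the only superfluous step is the appeal to openness of the flat locus in the reduction to closed points, since flatness of an $R$-algebra $S$ can already be tested at the maximal ideals of $S$ directly.
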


Keeping in mind the above definition and the previous results we can proceed to the following construction: Let $I$ be a right coideal of $U$. By Proposition \ref{ideal gives coideal subalgebra} this gives rise to a right coideal subalgebra $A$ of the dual Hopf algebra $H$. By Proposition \ref{1-1} this gives us a quotient left module coalgebra $H_A$ of $H$. If moreover $H$ is faithfully flat over $A$ (that is, if $A$ corresponds to a quantum homogeneous space), by Theorem \ref{MW} we conclude that there is an equivalence of categories $\mathcal{M}^H _ A \simeq \mathcal{M}^{H_A}$. Therefore if the coideal $I$ with which we started is associated to a classical subgroup $M$ of a group $G$, then the quotient coalgebra $H_A$ can be used as the definition of the quantum subgroup corresponding to $M$. In this setting $U$ is the quantized universal enveloping algebra of the Lie algebra of $G$ and $H$ is the quantum coordinate algebra $\mathcal{O}_q(G)$. 

This leads to the following definition:

\begin{Def} \label{quantum reductive subgroup} Let $H$ be a Hopf algebra with bijective antipode. A \textbf{quantum subgroup} is a quotient left $H$-module coalgebra $C$ of $H$ such that $H$ is faithfully coflat over $C$. 
\end{Def}

We would like to point out here, complementing the remarks about the homogeneous spaces above, that with the above definition we aim to quantize subgroups $H$ of a group $G$ such that $G/H$ is affine. As we saw, classically, the faithfully flatness condition corresponds exactly to these subgroups for which the quotient space is affine. 
It is also worth mentioning that if we restrict to the case where the ambient group is reductive then by Matsushima's criterion we know that $G/H$ is affine if and only if $H$ is reductive. So in this case the two notions coincide. 

In this latter case the following can be useful:

\begin{remark} If $H$ is cosemisimple, which is the case when we work with the quantum coordinate algebra $\mathcal{O}_q(G)$ of a reductive group for generic $q$, and if $A$ is not only a coideal subalgebra, but a Hopf subalgebra of $H$, then by the main result in \cite{Chivar} $H$ is faithfully flat over $A$. 
\end{remark}

\section{Module categories}

In this section we recall the definitions and establish the notation for monoidal and module categories. Our main references are \cite{Bakalov+Kirillov}, \cite{Kirillov+Ostrik} and \cite{Ostrik}.
We will assume throughout that all categories are abelian over an algebraically closed field $k$. All functors are assumed to be additive. 

\subsection{Monoidal categories} We start with the definition of a monoidal category. 
\begin{Def}\label{monoidal}  A \textbf{monoidal category} consists of the following:
\begin{enumerate}
\item A category $\mathcal{C}$,
\item a functor $\otimes : \mathcal{C} \times \mathcal{C} \rightarrow \mathcal{C}$,
\item functorial isomorphisms $a_{X,Y,Z} : ( X \otimes Y) \otimes Z \rightarrow X \otimes (Y \otimes Z)$,
\item a unit object $\mathbf{1} \in \mathcal{C}$, and
\item functorial isomorphisms $r_X : X \otimes \mathbf{1} \rightarrow X$ and $l_X : \mathbf{1} \otimes X \rightarrow X$,
\end{enumerate}
such that the following diagrams: 
 
$$
\text{
\xymatrix{ &{((X\otimes Y) \otimes Z) \otimes W} \ar[ld]_{a_{X,Y,Z} \otimes id}  \ar[rd]^{a_{X \otimes Y, Z, W}} &{} \\
{(X \otimes (Y \otimes Z)) \otimes W} \ar[d]_{a_{X,Y \otimes Z , W}} &{}  &{(X \otimes Y) \otimes (Z \otimes W)} \ar[d]^{a_{X,Y,Z \otimes W}} \\
{X \otimes ((Y \otimes Z) \otimes W)} \ar[rr]_{id \otimes a_{Y,Z,W}} &{} & {X \otimes (Y \otimes ( Z \otimes W))} } } $$

and 

$$
\text{
\xymatrix{ {(X \otimes \mathbf{1}) \otimes Y} \ar[rr]^{a_{X, \mathbf{1}, Y}} \ar[rd]_{r_X \otimes id}&{} & {X \otimes (\mathbf{1} \otimes Y)} \ar[ld]^{id \otimes l_Y} \\
 &{ X \otimes Y} &{} }}$$

commute.
\end{Def} 

\begin{Def}
Let $\mathcal{C}_1, \mathcal{C}_2$ be two monoidal categories. A \textbf{monoidal functor} between $\mathcal{C}_1$ and $\mathcal{C}_2$ is a triple $(F, b,u)$ consisting of a functor $F : \mathcal{C}_1 \rightarrow \mathcal{C}_2$, functorial isomorphisms $b_{X,Y} : F(X \otimes Y) \rightarrow F(X) \otimes F(Y)$ and isomorphism $u : F(\mathbf{1}) \rightarrow \mathbf{1}$ such that the following diagrams: 
$$$$
$$\begin{CD}
{F((X \otimes Y) \otimes Z)} @>{b_{X \otimes Y, Z}}>>     {F(X \otimes Y) \otimes F(Z)}     @> {b_{X,Z} \otimes id}>> {(F(X) \otimes F(Y)) \otimes F(Z)}\\
@VV {F_{a_{X,Y,Z}} }V @. @VV{a_{F(X), F(Y),F(Z)}}V   \\
{F(X \otimes (Y \otimes Z))}      @>{b_{X,Y \otimes Z}}>> {F(X) \otimes F(Y \otimes Z)}   @>{id \otimes b_{Y,Z}}>> {F(X) \otimes (F(Y) \otimes F(Z))} 
\end{CD} $$
\\\\
and

$$
\begin{CD}
{F(\mathbf{1} \otimes X)} @>{b_{\mathbf{1} \otimes X}}>> {F(\mathbf{1}) \otimes F(X)} @. {} @. {F(X \otimes \mathbf{1})} @>{b_{X,\mathbf{1}}}>> {F(X) \otimes F(\mathbf{1})} \\
@VV{F(l_X)}V @VV{u \otimes id}V @. @VV{F(r_X)}V @VV{id \otimes u}V \\
{F(X)} @>{l_{F(X)}}>> {\mathbf{1} \otimes F(X)} @. {\,\,\,\,\,\,\,\,\,\,\,\,\,\,\,\,\,\,\,\,\,\,\,\,} @. {F(X)} @>{r_{F(X)}}>> {F(X) \otimes \mathbf{1}}
\end{CD} $$

commute.

\end{Def}

Notice that in some other texts, a functor of the form defined above, for which $b$ and $u$ are required to be isomorphisms, is often called a strict monoidal functor.

\begin{example} If $H$ is a Hopf algebra, then the category of $H$-comodules is a monoidal category. Indeed, given two $H$-comodules $V$ and $W$, we can consider $V \otimes W$ in $\mathcal{M}^H$ with comodule structure given as follows:
$$
\text{ \xymatrixcolsep{5pc} \xymatrix { V \otimes W \ar[r]^{\rho_V \otimes \rho_W} & {V \otimes H \otimes W \otimes H} \ar[r]^{id \otimes \text{flip} \otimes id} &{V \otimes W \otimes H \otimes H} \ar[r]^{\,\,\,\,\,\,\,\,\,\,\,\,\,\, \,\,\, id \otimes id \otimes m_H} & {} \\{} & {} \ar[r]^{id \otimes id \otimes m_H} & {V \otimes W \otimes H} &{} }}$$
\end{example}

We finish this section by stating a theorem that will be useful later in order to simplify arguments in some proofs. First, we need a definition.

\begin{Def}
A monoidal category $\mathcal{C}$ is called \textbf{strict} if for all objects $X,Y, Z \in \mathcal{C}$ the functorial isomorphisms (from Definition \ref{monoidal}) $a_{X,Y,Z} , r_X$ and $l_X$ are the identity isomorphisms. In this case we have $ X \otimes \mathbf{1} = X$, $ \mathbf{1} \otimes X = X$ and $ ( X \otimes Y) \otimes Z = X \otimes (Y \otimes Z)$ , and similarly for multiple tensor products. 
\end{Def}

\begin{theorem} \label{monoidal strict}
Every monoidal category is equivalent to a strict one.
\end{theorem}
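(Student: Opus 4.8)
The plan is to build, for an arbitrary monoidal category $\mathcal{C}$, an explicitly strict monoidal category $\mathcal{C}^{\mathrm{str}}$ together with a monoidal equivalence $\mathcal{C} \simeq \mathcal{C}^{\mathrm{str}}$. The standard construction takes as objects of $\mathcal{C}^{\mathrm{str}}$ all finite (possibly empty) ordered sequences $(X_1, \dots, X_n)$ of objects of $\mathcal{C}$, where the empty sequence will serve as the strict unit. The tensor product on $\mathcal{C}^{\mathrm{str}}$ is concatenation of sequences, which is strictly associative and strictly unital on the nose; so the coherence isomorphisms $a, r, l$ can be taken to be identities, giving strictness for free. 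The subtlety is in defining morphisms: I would set $\mathrm{Hom}_{\mathcal{C}^{\mathrm{str}}}\big((X_1,\dots,X_n),(Y_1,\dots,Y_m)\big) := \mathrm{Hom}_{\mathcal{C}}\big(((\cdots(X_1 \otimes X_2)\otimes \cdots)\otimes X_n),\,((\cdots(Y_1 \otimes Y_2)\otimes\cdots)\otimes Y_m)\big)$, i.e.\ morphisms are just morphisms in $\mathcal{C}$ between the ``left-parenthesized products'' of the sequences (with the empty product interpreted as $\mathbf{1}$). Composition is composition in $\mathcal{C}$, so $\mathcal{C}^{\mathrm{str}}$ is automatically a category.

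The main work is to define the tensor product on morphisms and check it is a functor. Given $f : (X_i) \to (Y_j)$ and $g : (X'_k) \to (Y'_l)$, regarded as morphisms $\bar f : P(X_i) \to P(Y_j)$ and $\bar g : P(X'_k) \to P(Y'_l)$ in $\mathcal{C}$ where $P$ denotes the left-parenthesized product, I need a morphism $P(X_1,\dots,X_n,X'_1,\dots,X'_s) \to P(Y_1,\dots,Y_m,Y'_1,\dots,Y'_t)$. There is a canonical isomorphism $\phi : P(X_i) \otimes P(X'_k) \xrightarrow{\sim} P(X_1,\dots,X_n,X'_1,\dots,X'_s)$ built from the associativity and unit constraints of $\mathcal{C}$; by Mac Lane's coherence theorem this isomorphism is unique (any two such built from $a,r,l$ agree), so the definition $f \boxtimes g := \phi_Y \circ (\bar f \otimes \bar g) \circ \phi_X^{-1}$ is unambiguous. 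Functoriality of $\boxtimes$ and the interchange law follow from functoriality of $\otimes$ in $\mathcal{C}$ together with naturality of the constraints (again collapsed by coherence).

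Finally I would exhibit the monoidal equivalence. Define $F : \mathcal{C}^{\mathrm{str}} \to \mathcal{C}$ on objects by $F(X_1,\dots,X_n) = P(X_1,\dots,X_n)$ (so $F$ of the empty sequence is $\mathbf{1}$, $F$ of a length-one sequence is the object itself) and $F$ is the identity on hom-sets by construction. This $F$ is visibly full and faithful, and essentially surjective since every object $X$ of $\mathcal{C}$ is $F$ of the length-one sequence $(X)$; hence $F$ is an equivalence of categories. The structure isomorphisms $b_{\mathbf{X},\mathbf{X}'} : F(\mathbf{X} \otimes \mathbf{X}') \to F(\mathbf{X}) \otimes F(\mathbf{X}')$ are precisely the inverses of the canonical comparison isomorphisms $\phi$ above, and $u : F(\mathbf{1}_{\mathrm{str}}) = \mathbf{1} \to \mathbf{1}$ is the identity; the two hexagon/triangle axioms for a monoidal functor then reduce, via Mac Lane coherence in $\mathcal{C}$, to commutativity of diagrams of canonical constraint morphisms, which holds automatically.

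I expect the genuine obstacle to be a clean invocation of the coherence theorem: one must be careful that the comparison maps $\phi$ are well-defined independent of how they are assembled from $a,r,l$, and that all the required compatibilities are consequences of coherence rather than needing separate verification. Since a full proof of Mac Lane's coherence theorem is itself nontrivial, I would either cite it (it is available in \cite{Bakalov+Kirillov}) or, if a self-contained argument is wanted, prove the weaker statement that the relevant canonical isomorphisms between iterated tensor products are unique, which is all that is actually used here. Everything else is routine diagram-chasing that I would only sketch.
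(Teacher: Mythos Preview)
Your proposal is correct and is precisely the standard strictification argument. The paper itself does not prove this theorem at all: immediately after the statement it simply writes ``A proof of this can be found in \cite{MacLane}'', so there is nothing to compare beyond noting that the construction you outline (finite sequences with concatenation as tensor, hom-sets given by morphisms between left-bracketed products, comparison isomorphisms furnished by coherence) is exactly Mac~Lane's proof being cited. Your identification of the one genuine subtlety---that well-definedness of $\boxtimes$ on morphisms and of the monoidal structure maps $b$ rests on the coherence theorem---is accurate, and your suggestion to cite coherence rather than reprove it matches what the paper does.
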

A proof of this can be found in \cite{MacLane}.

\subsection{Module categories}
\begin{Def} A \textbf{module category} over a monoidal category $\mathcal{C}$ consists of the following:
\begin{enumerate}
\item A category $\mathcal{M}$,
\item an exact bifunctor $\otimes : \mathcal{C} \times \mathcal{M} \rightarrow \mathcal{M}$,
\item functorial isomorphisms $m_{X,Y,M} : (X \otimes Y) \otimes M \rightarrow X \otimes (Y \otimes M)$, for $X, Y \in \mathcal{C}$ and $M \in \mathcal{M}$ and
\item functorial isomorphisms $l_M : \mathbf{1} \otimes M \rightarrow M, $

\end{enumerate}
such that the following diagrams: 

$$
\text{
\xymatrix{ &{((X\otimes Y) \otimes Z) \otimes M} \ar[ld]_{a_{X,Y,Z} \otimes id}  \ar[rd]^{m_{X \otimes Y, Z, M}} &{} \\
{(X \otimes (Y \otimes Z)) \otimes M} \ar[d]_{m_{X,Y \otimes Z , M}} &{}  &{(X \otimes Y) \otimes (Z \otimes M)} \ar[d]^{m_{X,Y,Z \otimes M}} \\
{X \otimes ((Y \otimes Z) \otimes M)} \ar[rr]_{id \otimes m_{Y,Z,M}} &{} & {X \otimes (Y \otimes ( Z \otimes M))} } } $$

and 

$$
\text{
\xymatrix{ {(X \otimes \mathbf{1}) \otimes M} \ar[rr]^{m_{X, \mathbf{1}, M}} \ar[rd]_{r_X \otimes id}&{} & {X \otimes (\mathbf{1} \otimes M)} \ar[ld]^{id \otimes l_M} \\
 &{ X \otimes M} &{} }}$$

commute.
\end{Def}

\begin{Def} Let $\mathcal{M}_1, \mathcal{M}_2$ be two module categories over a monoidal category $\mathcal{C}$. A \textbf{module functor} from $\mathcal{M}_1$ to $\mathcal{M}_2$ is a functor $F : \mathcal{M}_1 \rightarrow \mathcal{M}_2$ together with functorial isomorphisms $c_{X,M} : F(X \otimes M) \rightarrow X \otimes F(M)$ for every $X \in \mathcal{C}$ and $ M \in \mathcal{M}_1$,  such that the following diagrams:

$$
\text{
\xymatrix{ &{F(( X \otimes Y) \otimes M)} \ar[ld]_{F{m_{X,Y,M}}}  \ar[rd]^{c_{X \otimes Y, M}} &{} \\
{F(X \otimes (Y \otimes M))} \ar[d]_{c_{X,Y \otimes M}} &{}  &{(X \otimes Y) \otimes F(M)} \ar[d]^{m_{X,Y,F(M)}} \\
{X \otimes F( Y \otimes M)} \ar[rr]_{id \otimes c_{Y,M}} &{} & {X \otimes (Y \otimes F(M))} } } $$

and 

$$
\text{
\xymatrix{ { F( \mathbf{1} \otimes M)} \ar[rr]^{Fl_M} \ar[rd]_{c_{\mathbf{1}, M}}&{} & {F(M)} \\
 &{ \mathbf{1} \otimes F(M)} \ar[ru]^{l_{F(M)}} &{} }}$$

commute. 
\end{Def}
 Notice that what we have defined is often called a strict module functor in other texts, because we are requiring that the morphisms $c_{X,M}$ are isomorphisms. 

\begin{example} Any monoidal category $\mathcal{C}$ can be viewed as a module category over itself, with associativity and unit isomorphisms given by the ones from the monoidal structure. 

\end{example}

\subsection{Algebras in monoidal categories}
\begin{Def} \label{algebra in monoidal}
An \textbf{algebra} in a monoidal category $\mathcal{C}$ is an object $A \in \mathcal{C}$ together with a multiplication morphism $m : A \otimes A \rightarrow A$ and a unit morphism $ e : \mathbf{1} \rightarrow A$ such that the following diagrams: 

$$
\text{
\xymatrix{ {} & {(A \otimes A) \otimes A} \ar[ld]_{a_{A,A,A}}  \ar[rd]^{m \otimes id} &{} \\
{A \otimes (A \otimes A)} \ar[d]_{id \otimes m} &{}  &{ A \otimes A} \ar[d]^{m} \\
{ A \otimes A} \ar[rr]_{m} &{} & {A} } } $$

and 

$$
\text{
\xymatrix{ {} & {\mathbf{1} \otimes A} \ar[ld]_{l_A} \ar[rd]^{e \otimes id} & {} &{} &{ A \otimes \mathbf{1}} \ar[ld]_{r_a} \ar[rd]^{id \otimes e} &{} \\
A & {} & { A \otimes A} \ar[ll]^{m} &{A} &{} &{A \otimes A} \ar[ll]^{m}}}$$

commute.
\end{Def}

\begin{example} If $\mathcal{C}$ is the monoidal category of $H$-comodules for a Hopf algebra $H$, and if $A$ is a coideal subalgebra of $H$, then $A$ is an algebra for the monoidal category $\mathcal{C}$. 

\end{example}

\begin{Def} \label{modules for algebra in monoidal} A \textbf{right module} over an algebra $A$ in a monoidal category $\mathcal{C}$ is an object $M \in \mathcal{C}$ together with an action morphism $ a : M \otimes A \rightarrow M$ such that the following diagrams:

$$
\text{
\xymatrix{ {M \otimes A \otimes A} \ar[r]^{id \otimes m} \ar[d]^{a \otimes id} &{M \otimes A} \ar[d]_{a} &{} &{} &{M \otimes \mathbf{1}} \ar[ld]_{r_M} \ar[rd]^{id \otimes e} &{} \\
{M \otimes A} \ar[r]^{a} &{M} &{} &{M} &{} &{M \otimes A} \ar[ll]^{a} }}$$

commute.
Similarly we can define the notion of a left module.

\end{Def}

Finally,
\begin{Def} A \textbf{morphism} between two right modules $M_1, M_2$ over $A$ is a morphism in $\mathcal{C}$ such that the diagram commutes:
$$
\begin{CD}
{M_1 \otimes A} @>{a \otimes id}>> {M_2 \otimes A}\\
@VV{a_1}V @VV{a_2}V\\
{M_1} @>a>> {M_2}
\end{CD}
$$
There is a similar definition for left modules.
\end{Def}

If $A$ is an algebra in a monoidal category $\mathcal{C}$, we will denote the category of right $A$-modules by $\text{Mod}_{\mathcal{C}}(A)$. This is an abelian category. 

\begin{remark}\label{A-modules are a module category} Using the above notations, $ \text{Mod}_{\mathcal{C}}(A)$ can be endowed with the structure of a module category over $\mathcal{C}$.
\end{remark}
Indeed, let $M$ be a right $A$-module and let $X \in \mathcal{C}$. We want to define a functor $\otimes : \mathcal{C} \times \text{Mod}_{\mathcal{C}}(A) \rightarrow \text{Mod}_{\mathcal{C}}(A)$. Since $X, M \in \mathcal{C}$, we can consider the object $ X \otimes M \in \mathcal{C}$ (using the structure of $\mathcal{C}$). We see that $ X \otimes M$ is also a right $A$-module with action morphism given by $id \otimes a_M$. All necessary properties of the module category $\text{Mod}_{\mathcal{C}}(A)$ now follow from the monoidal structure of $\mathcal{C}$.

\begin{example} \label{They define same category} Suppose that $\mathcal{C}$ is the monoidal category of right $H$-comodules for a Hopf algebra $H$,  and that $A$ is a coideal subalgebra. Then $\text{Mod}_{\mathcal{C}}(A)$ is equal to the category $\mathcal{M}^H_A$ defined in \ref{H-comodules, A-modules}.

\end{example}

Using Remark \ref{A-modules are a module category} and Example \ref{They define same category} above we can state the following

\begin{corollary}\label{coideal subalgebra gives module category}
Any coideal subalgebra $A$  of a Hopf algebra $H$ gives rise to a module category over the category of $H$-comodules.
\end{corollary}


\section{Module categories of the form $\text{Mod}_{\mathcal{C}}(A)$} \label{prwto}

The previous section finished with the interesting Corollary \ref{coideal subalgebra gives module category}. It is natural to ask under what conditions we could have a converse version of it. Specifically, we need to find answers to two questions. Firstly, given a monoidal category $\mathcal{C}$, and a module category $\mathcal{M}$ over it, when is $\mathcal{M}$ equivalent to $\text{Mod}_{\mathcal{C}}(A)$  for an algebra $A$ in $\mathcal{C}$? 
And secondly, in the specific case where $\mathcal{C}$ is considered to be the category of $H$-comodules for a Hopf algebra $H$, when does an algebra $A$ in $\mathcal{C}$ correspond to a coideal subalgebra of $H$?
In the following sections, we will try to find an answer to these two questions.

\subsection{Background material}
We start by recalling the theory of monads and comonads.

\begin{Def} \label{Monad} A \textbf{monad} $T$ on a category $\mathcal{C}$ is an endofunctor $T : \mathcal{C} \rightarrow \mathcal{C}$ together with two natural transformations $\mu : T^2 \rightarrow T$ and $\eta : id_{\mathcal{C}} \rightarrow T$ such that the following diagrams:
$$
\begin{CD}
{T^3} @>{T\mu}>> {T^2}\\
@VV{\mu T}V @VV{\mu}V\\
{T^2} @>\mu>> {T}
\end{CD}
$$

and 

$$
\text{
 \xymatrix{
T  \ar[rd]_{=} \ar[r]^{T \eta} &  T^2 \ar[d]_{ \mu} & T \ar[l]_{\eta T} \ar[ld]^{=}\\
&{T} }}$$ 
 
commute.

\end{Def}

\begin{remark} \label{notation in diagram}
In the above definition, by $T\mu$ at an object $X$ we mean $T(\mu_X)$, and by $\mu T$ at an object $X$ we mean $\mu_{T(X)}$, where in general $\mu_Y$ denotes the morphism $\mu : T^2(Y) \rightarrow T(Y)$. The same notation is being used for $\eta$. 
\end{remark}

Monads are closely related to adjoint functors. Specifically, it is known that a pair of adjoint functors gives rise to a monad.
\begin{theorem}
Let $G : \mathcal{C} \rightarrow \mathcal{D}$ be a functor between two categories which admits a right adjoint $U : \mathcal{D} \rightarrow \mathcal{C}$ with adjunction morphisms $\eta : id \rightarrow UG$ and $\epsilon : GU \rightarrow id$. Then $T = (UG, \eta, U\epsilon G)$ is a monad on $\mathcal{C}$.  

\end{theorem}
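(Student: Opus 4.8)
The plan is to verify the two monad axioms directly, using only the triangle identities of the adjunction $(G \dashv U)$ with unit $\eta : \mathrm{id}_{\mathcal C} \to UG$ and counit $\epsilon : GU \to \mathrm{id}_{\mathcal D}$. Write $T = UG$, multiplication $\mu = U\epsilon G : T^2 = UGUG \to UG = T$, and unit $\eta : \mathrm{id}_{\mathcal C} \to T$. First I would establish associativity: I must show $\mu \circ \mu T = \mu \circ T\mu$ as natural transformations $T^3 \to T$, i.e.\ $U\epsilon G \circ U\epsilon G U G = U\epsilon G \circ UGU\epsilon G$. Applying $U(-)G$ to both sides, this reduces to the identity $\epsilon \circ \epsilon GU = \epsilon \circ GU\epsilon$ of natural transformations $GUGU \to \mathrm{id}_{\mathcal D}$, which is precisely the naturality of $\epsilon : GU \to \mathrm{id}$ applied to the morphism $\epsilon : GU \to \mathrm{id}$ (the naturality square of $\epsilon$ at the component $\epsilon_X$). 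I would spell this square out explicitly at an object $X$ and note that it commutes by naturality.

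Next I would check the unit axioms: $\mu \circ T\eta = \mathrm{id}_T = \mu \circ \eta T$. For the first, $\mu \circ T\eta = U\epsilon G \circ UG\eta = U(\epsilon G \circ G\eta) = U(\epsilon G \circ G\eta G)$; here $\epsilon G \circ G\eta$, evaluated at an object, is exactly one of the two triangle identities of the adjunction, namely $\epsilon_{G(X)} \circ G(\eta_X) = \mathrm{id}_{G(X)}$, so applying $U$ gives $\mathrm{id}_{UG}$. For the second, $\mu \circ \eta T = U\epsilon G \circ \eta UG = (U\epsilon \circ \eta U)G$ by naturality of $\eta$ (slid past $U\epsilon$), and $U\epsilon \circ \eta U$ evaluated at an object is the other triangle identity, $U(\epsilon_Y) \circ \eta_{U(Y)} = \mathrm{id}_{U(Y)}$, so this too is $\mathrm{id}_{UG}$. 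Throughout I would use the interchange law / naturality to move the various components past one another, keeping the bookkeeping of Remark \ref{notation in diagram} in mind (so that, e.g., $\mu T$ at $X$ means $\mu_{T(X)} = U\epsilon_{GUG(X)}G$ and $T\mu$ at $X$ means $U G(\mu_X) = UG(U\epsilon_{G(X)}G)$).

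The only genuinely delicate point is keeping the whiskering unambiguous: the expressions $\mu T$ versus $T\mu$, and $\eta T$ versus $T\eta$, look symmetric but involve components of $\epsilon$ and $\eta$ at different objects, so the main obstacle is purely notational bookkeeping rather than any conceptual difficulty. I would therefore present the argument as two commuting diagrams — the hexagon for associativity, which collapses to the naturality square of $\epsilon$, and the triangle diagram for the unit, which collapses to the two triangle identities — and remark that this is the classical construction due to Huber; a fuller treatment may be found in Mac Lane's \emph{Categories for the Working Mathematician}. I do not expect to need any hypothesis beyond the bare adjunction, so no further assumptions on $\mathcal C$, $\mathcal D$, or the functors are required.
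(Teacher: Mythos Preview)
Your argument is correct and is precisely the standard verification: associativity of $\mu$ collapses to the naturality square of $\epsilon$ at its own components, and the two unit laws collapse to the two triangle identities of the adjunction. The paper does not actually supply a proof of this statement; it only cites Barr--Wells \cite{toposestriples}, so there is no in-paper argument to compare against beyond noting that yours is the textbook one.

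Two cosmetic points worth cleaning up before you write it out. First, in the line $U(\epsilon G \circ G\eta) = U(\epsilon G \circ G\eta G)$ the trailing $G$ is a slip---you correctly drop it in the very next sentence when you invoke the triangle identity $\epsilon_{G(X)}\circ G(\eta_X)=\mathrm{id}_{G(X)}$. Second, the equality $(U\epsilon G)\circ(\eta UG) = (U\epsilon \circ \eta U)G$ is simply associativity of horizontal composition (pulling the common right-whisker $G$ out), not naturality of $\eta$; the conclusion is of course unaffected. Neither of these is a genuine gap.
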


For a proof of this very well known fact, see for example \cite{toposestriples}.

\begin{Def} Let $(T, \mu, \eta)$ be a monad on a category $\mathcal{C}$. A \textbf{$T$-algebra} is a pair $(N,\lambda)$ where $N \in \mathcal{C}$ and $\lambda : TN \rightarrow N$ is a morphism in $\mathcal{C}$ such that the following two diagrams: 
$$
\text{ \xymatrix { N \ar[rd]_{id_{N}} \ar[r]^{\eta N} & {TN} \ar[d]^{\lambda}    &{}  & {T^2N} \ar[d]_{\mu N} \ar[r]^{T\lambda} & {TN} \ar[d]^{\lambda}  \\
& N &{} &{TN} \ar[r]_{\lambda} & N   }}$$
commute.
A map $f : (N, \lambda) \rightarrow (B, {\lambda}')$ is a map $f : N \rightarrow B$ in $\mathcal{C}$ such that $f \circ \lambda = {\lambda}' \circ Tf$.
\end{Def}

We denote the category of all $T$-algebras for a monad $T$ in a category $\mathcal{C}$ by $\mathcal{C}^T$.

\begin{remark} \label{object as T-algebra} We remark that every object $X \in \mathcal{C}$ gives rise to a $T$-algebra $(T(X), \mu_X)$.

\end{remark}

The assignment $ X \mapsto (T(X), \mu_X)$ yields a functor $F^T : \mathcal{C} \rightarrow \mathcal{C}^T$ with $F^Tf = Tf$. 
On the other hand every $T$-algebra $(N, \lambda)$ can be seen as an object in $\mathcal{C}$ by considering the underlying object $N$ and forgetting the structure given by $\lambda$. This gives a functor $U^T : \mathcal{C}^T \rightarrow \mathcal{C}$ with $U^Tf =f$. 

$(F^T, U^T, \eta, \epsilon_T)$ is a pair of adjoint functors and it is easy to see that it defines the given monad $(T, \mu, \eta)$.
Moreover it satisfies a universal property: \\ \\ For every adjoint pair $ (G, U, \eta, \epsilon)$ between two categories $\mathcal{C}$ and $\mathcal{D}$ that defines the monad $(T, \mu, \eta)$ there is a unique functor $K : \mathcal{D} \rightarrow \mathcal{C}^T$, called the \textbf{comparison functor}, such that $KG=F^T$ and $U^T K = U$:
$$
\text{ \xymatrixcolsep{5pc}
\xymatrix{ {\mathcal{C}} \ar@/^/[r]^{F^T} \ar@/^/[d]^{G} & {\mathcal{C}^T} \ar@/^/[l]_{U^T} \\
{\mathcal{D}} \ar@/^/[u]^{U} \ar[ur]_{K} &{} }}$$
 (See also \cite{MacLane}). \\ \\

\begin{Def} We say that an adjoint pair $(G, U, \eta, \epsilon)$ is \textbf{monadic} if the functor $K : \mathcal{D} \rightarrow \mathcal{C}^T$ is an equivalence. 
\end{Def}

The very well known Barr-Beck Monadicity theorem gives conditions under which an adjoint pair is monadic. Here, we give a version of the Barr-Beck theorem for abelian categories which can be found in \cite{beckforabelian}.
\begin{theorem} \label{Barr-Beck}
Let $G: \mathcal{C} \rightarrow \mathcal{D} $ be an additive functor which admits a right adjoint $U : \mathcal{D} \rightarrow \mathcal{C}$ which is exact. Denote the defined monad by $T$. Then the comparison functor $K : \mathcal{D} \rightarrow \mathcal{C}^T$ is an equivalence if and only if $U$ is faithful.

\end{theorem}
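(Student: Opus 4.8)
The plan is to prove Theorem \ref{Barr-Beck} (the abelian-category version of Barr--Beck) by using the standard monadicity theorem and checking that the two nontrivial hypotheses of Beck's criterion are automatic in the abelian setting. Recall that Beck's theorem says $K$ is an equivalence iff $G$ has a right adjoint $U$ and $\mathcal{D}$ has, and $U$ preserves, coequalizers of $U$-split pairs. So the strategy is: (i) reduce ``$U$-split coequalizers'' to genuine cokernels by exploiting additivity; (ii) show that if $U$ is exact then it preserves the relevant cokernels; (iii) show that if $U$ is faithful then $G$ reflects the relevant cokernels, which upgrades ``preserves'' to ``creates''; and conversely (iv) observe $U = U^T K$ is faithful whenever $K$ is an equivalence since $U^T$ is always faithful.

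First I would unwind the split-pair condition. Given a parallel pair $f,g : M \to N$ in $\mathcal{D}$ such that the pair $Uf, Ug$ is split by maps $s : UN \to UM$, $t : UN' \to UN$ in $\mathcal{C}$, additivity lets me pass to the single morphism $h := f - g$, and a split coequalizer of $(f,g)$ is the same as a split cokernel of $h$. The splitting data exhibit $UN' = U(\operatorname{coker} h)$ as a retract, hence the cokernel of $Uh$ is computed as the image of an idempotent, and in particular $U$ automatically preserves it once we know it is exact (exact additive functors between abelian categories preserve cokernels, images, and split idempotents). So step (ii) is essentially immediate from exactness of $U$: $U$ preserves the cokernel of any $U$-split morphism, and $\mathcal{D}$ has that cokernel because $\mathcal{D}$ is abelian.

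The heart of the argument is step (iii): upgrading to the monadicity conclusion. Here I would invoke the precise monadicity statement — $K$ is an equivalence iff $G$ has a right adjoint and $\mathcal{D}$ has and $U$ \emph{creates} coequalizers of $U$-split pairs — and show creation follows from $U$ faithful plus $U$ exact. Concretely: given a $U$-split pair, build the candidate coequalizer in $\mathcal{D}$ (as a cokernel, $\mathcal{D}$ being abelian), note $U$ sends it to \emph{the} coequalizer in $\mathcal{C}$ by step (ii); faithfulness of $U$ then forces this candidate to be, up to unique isomorphism, the coequalizer and forces uniqueness of the lift. (Alternatively one can cite the reflexive/split form of Barr--Beck directly and only verify $U$ is exact and conservative, since a faithful exact additive functor between abelian categories is conservative: if $U\alpha$ is an isomorphism then $U$ of the kernel and cokernel of $\alpha$ vanish, hence they vanish.) For the converse direction, if $K$ is an equivalence then $U = U^T \circ K$ up to natural isomorphism, and $U^T : \mathcal{C}^T \to \mathcal{C}$ is faithful essentially by definition (a $T$-algebra morphism is determined by the underlying $\mathcal{C}$-morphism), so $U$ is faithful, and exactness of $U$ was assumed throughout.

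The main obstacle I expect is the bookkeeping in step (iii): one has to be careful that ``$U$ exact and faithful'' genuinely yields the \emph{creation} of $U$-split coequalizers, not merely their preservation, since it is creation that Barr--Beck requires. The clean way around this is the observation that in the additive world a $U$-split coequalizer is a split cokernel, and split cokernels are absolute (preserved and reflected by every additive functor); combined with $\mathcal{D}$ abelian so the cokernel exists, and $U$ faithful so reflection of isomorphisms holds, creation is forced. Everything else — additivity of $F^T, U^T$, exactness of the adjunction unit/counit comparisons, and the fact that $U^T$ is faithful — is routine and can be cited from \cite{MacLane} or \cite{beckforabelian}.
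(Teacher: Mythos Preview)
The paper does not actually prove this theorem; it merely states it as a known result and cites \cite{beckforabelian} for the proof. So there is no ``paper's own proof'' to compare against.

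That said, your sketch is correct and is essentially the standard argument one finds in the literature (including in the cited reference). The key reduction---that in the additive setting a $U$-split coequalizer of $(f,g)$ is the same as a cokernel of $f-g$, and that a faithful exact additive functor between abelian categories is conservative because it kills kernels and cokernels only when they vanish---is exactly right, and it lets you invoke the ``conservative plus preserves reflexive coequalizers'' form of Beck's criterion. Your converse via $U \cong U^T \circ K$ with $U^T$ faithful is also the standard one-liner. One minor comment: you do not really need the split-cokernel-is-absolute observation separately, since once you know $\mathcal{D}$ is abelian (hence has all coequalizers) and $U$ is exact (hence preserves them) and conservative, the precise monadicity theorem applies directly; the absoluteness remark is true but redundant.
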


By dualizing the above definitions and constructions we can define comonads and coalgebras for a comonad. 

The Barr-Beck theorem for comonads has the following form :
\begin{theorem} \label{Barr-Beck for comonads}
Let $F: \mathcal{C} \rightarrow \mathcal{B} $ be an additive functor which is exact and faithful. Assume also that it admits a right adjoint $U : \mathcal{B} \rightarrow \mathcal{C}$. Denote the defined comonad on $\mathcal{B}$ by $G$. Then $\mathcal{C}$ is equivalent to the category $\mathcal{M}^G$ of $G$-coalgebras. 

\end{theorem}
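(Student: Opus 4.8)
The plan is to deduce this statement directly from the monadic Barr--Beck theorem (Theorem \ref{Barr-Beck}) by passing to opposite categories, so that the comonad $G = FU$ on $\mathcal{B}$ becomes a monad on $\mathcal{B}^{op}$ and $G$-coalgebras become algebras over that monad.

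First I would record the dualization of the adjunction. From $F \dashv U$ with $F : \mathcal{C} \to \mathcal{B}$ the left adjoint, one obtains an adjunction $U^{op} \dashv F^{op}$ in which $U^{op} : \mathcal{B}^{op} \to \mathcal{C}^{op}$ is now the \emph{left} adjoint and $F^{op} : \mathcal{C}^{op} \to \mathcal{B}^{op}$ the \emph{right} adjoint; the unit of $F \dashv U$ becomes the counit of $U^{op} \dashv F^{op}$ and conversely. The monad on $\mathcal{B}^{op}$ attached to this adjunction is $F^{op}U^{op} = (FU)^{op} = G^{op}$, and a straightforward unwinding of the axioms shows that an algebra $(N,\lambda)$ over the monad $G^{op}$ is precisely the datum of a $G$-coalgebra $(N,\lambda^{op})$ in $\mathcal{B}$, compatibly with morphisms; that is, $(\mathcal{B}^{op})^{G^{op}} = (\mathcal{M}^G)^{op}$.

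Next I would check that the hypotheses of Theorem \ref{Barr-Beck} hold for the adjunction $U^{op} \dashv F^{op}$ (note that $\mathcal{B}^{op}$ and $\mathcal{C}^{op}$ are again abelian): the left adjoint $U^{op}$ is additive, being the opposite of $U$, which is additive as a right adjoint between abelian categories; it admits the right adjoint $F^{op}$; and $F^{op}$ is exact, because exactness of a functor is invariant under passing to opposites and $F$ is exact by hypothesis. Theorem \ref{Barr-Beck} then asserts that the comparison functor $K : \mathcal{C}^{op} \to (\mathcal{B}^{op})^{G^{op}}$ is an equivalence if and only if $F^{op}$ is faithful; and $F^{op}$ is faithful precisely because $F$ is faithful, which is our remaining hypothesis. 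Hence $K$ is an equivalence of categories.

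Finally, transporting along the identification $(\mathcal{B}^{op})^{G^{op}} = (\mathcal{M}^G)^{op}$ and taking opposite categories turns $K$ into an equivalence $\mathcal{C} \xrightarrow{\ \sim\ } \mathcal{M}^G$; explicitly it sends $c \in \mathcal{C}$ to the $G$-coalgebra $(Fc,\ F\eta_c)$, where $\eta : \mathrm{id}_{\mathcal{C}} \to UF$ is the unit of the adjunction and $F\eta_c : Fc \to FUFc = G(Fc)$ is the structure map. I do not expect any genuine obstacle: the content is entirely in Theorem \ref{Barr-Beck}, and the only point requiring care is the bookkeeping of the dualization — verifying that ``left adjoint'' and ``right adjoint'' swap roles under $(-)^{op}$, that exactness and faithfulness are self-dual, and that the category of algebras for $G^{op}$ is literally $(\mathcal{M}^G)^{op}$ — after which the conclusion is immediate.
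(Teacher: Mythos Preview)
Your proposal is correct and is exactly the approach the paper takes: the paper does not give a proof of this theorem at all, but simply introduces it with the sentence ``By dualizing the above definitions and constructions we can define comonads and coalgebras for a comonad,'' and then states the result. Your argument is a careful spelling-out of that dualization, and the bookkeeping (swapping of adjoints under $(-)^{op}$, self-duality of exactness and faithfulness, and the identification $(\mathcal{B}^{op})^{G^{op}} \cong (\mathcal{M}^G)^{op}$) is handled correctly.
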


\subsection{Main theorem}

\begin{theorem} \label{theorem 1}Let $ (\mathcal{C}, \otimes, I)$ be a monoidal category and let $\mathcal{M}$ be a module category over $\mathcal{C}$. Assume that there is a pair of adjoint functors $\text{Res} : \mathcal{C} \rightarrow \mathcal{M}$ and $\text{Ind} : \mathcal{M} \rightarrow \mathcal{C}$ such that both Res and Ind are morphisms of module categories. Assume further that Ind is exact and faithful. Then $\mathcal{M}$ is equivalent to $\text{Mod}_{\mathcal{C}}(A)$ for an algebra $A \in \mathcal{C}$. 

\end{theorem}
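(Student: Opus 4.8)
The plan is to produce the algebra $A$ as the image of the unit object under the monad $T = \mathrm{Ind} \circ \mathrm{Res}$ on $\mathcal{C}$, and then identify $T$-algebras in $\mathcal{C}$ with $A$-modules in $\mathcal{C}$. First I would set $A := \mathrm{Ind}(\mathrm{Res}(I))$, where $I$ is the unit of $\mathcal{C}$. Because $\mathrm{Res}$ and $\mathrm{Ind}$ are module functors, the composite endofunctor $T = \mathrm{Ind}\,\mathrm{Res}$ carries a natural isomorphism $T(X) = \mathrm{Ind}(\mathrm{Res}(I \otimes X)) \cong \mathrm{Ind}(\mathrm{Res}(I) \otimes X) \cong \mathrm{Ind}(\mathrm{Res}(I)) \otimes X = A \otimes X$; here the first step uses the unit isomorphism of $\mathcal{C}$, the second uses that $\mathrm{Res}$ is a module functor, and the third uses that $\mathrm{Ind}$ is a module functor. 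So $T \cong A \otimes (-)$ as endofunctors of $\mathcal{C}$.

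Next I would transport the monad structure $(\mu, \eta)$ of $T$ through this isomorphism to obtain morphisms $m : A \otimes A \to A$ and $e : I \to A$ in $\mathcal{C}$, and check that the monad axioms (Definition~\ref{Monad}) become exactly the associativity and unit axioms of an algebra in $\mathcal{C}$ (Definition~\ref{algebra in monoidal}). Concretely, $m$ is $\mu$ evaluated at $I$ after identifying $T^2(I) \cong A \otimes A$ and $T(I) \cong A$, and $e = \eta_I$ under $T(I) \cong A$. One then verifies that under the isomorphism $T \cong A\otimes(-)$, a $T$-algebra structure $\lambda : T(N) \to N$ is the same datum as a right $A$-module action $N \otimes A \to N$ (using the braiding-free rewriting $A \otimes N \cong N \otimes A$ available because everything is happening through the fixed module-functor isomorphisms, or more carefully by being consistent about left vs. right throughout — see the remark below), and that morphisms of $T$-algebras correspond to $A$-module morphisms. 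This yields an equivalence $\mathcal{C}^T \simeq \mathrm{Mod}_{\mathcal{C}}(A)$.

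Finally I would invoke the Barr--Beck theorem for abelian categories, Theorem~\ref{Barr-Beck}: the adjunction $(\mathrm{Res}, \mathrm{Ind}, \eta, \epsilon)$ defines the monad $T$ on $\mathcal{C}$, and by hypothesis the right adjoint $\mathrm{Ind}$ is exact and faithful, so the comparison functor $K : \mathcal{M} \to \mathcal{C}^T$ is an equivalence. Composing with $\mathcal{C}^T \simeq \mathrm{Mod}_{\mathcal{C}}(A)$ gives $\mathcal{M} \simeq \mathrm{Mod}_{\mathcal{C}}(A)$, as desired. (By Theorem~\ref{monoidal strict} I may assume $\mathcal{C}$ is strict, which removes the clutter of associators and unitors in all of the above.)

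I expect the main obstacle to be bookkeeping of sidedness and the compatibility of the module-functor constraints with the monad multiplication. The subtle point is that the natural isomorphism $T \cong A \otimes (-)$ must be shown to intertwine $\mu$ with $m \otimes \mathrm{id}$ (composed with the evident reassociation), and this is where the hexagon-type coherence diagrams in the definition of a module functor get used; one has to expand $\mu = \mathrm{Ind}(\epsilon \mathrm{Res})$ and chase it through the constraints $c_{X,M}$ of $\mathrm{Res}$ and $\mathrm{Ind}$. Getting the algebra to act on the correct side (so that one genuinely lands in $\mathrm{Mod}_{\mathcal{C}}(A)$ as defined in Definition~\ref{modules for algebra in monoidal}, rather than in left modules) requires care but no new ideas; it is a diagram chase that the coherence theorem keeps manageable.
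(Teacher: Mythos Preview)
Your proposal is essentially the paper's own proof: set $A=T(I)$ for the monad $T=\mathrm{Ind}\circ\mathrm{Res}$, use the module-functor constraints to identify $T$ with tensoring by $A$, transport the monad structure to an algebra structure on $A$, apply Barr--Beck (Theorem~\ref{Barr-Beck}) to get $\mathcal{M}\simeq\mathcal{C}^T$, and then identify $T$-algebras with $A$-modules. The one concrete slip is your computation of the isomorphism $T(X)\cong A\otimes X$: the module-functor constraint for $\mathrm{Res}:\mathcal{C}\to\mathcal{M}$ says $\mathrm{Res}(X\otimes N)\cong X\otimes\mathrm{Res}(N)$, so $\mathrm{Res}(I\otimes X)\cong I\otimes\mathrm{Res}(X)$ is trivial and does \emph{not} give $\mathrm{Res}(I)\otimes X$; the correct unravelling is $T(X)=T(X\otimes I)\cong X\otimes T(I)=X\otimes A$, exactly as in the paper, and then $T$-algebras are visibly \emph{right} $A$-modules with no need for any ``braiding-free rewriting''. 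Once you fix this sidedness, your sketch and the paper's argument coincide step for step.
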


\begin{proof}
Without loss of generality (see also Theorem \ref{monoidal strict}), we can assume that $\mathcal{C}$ is a strict monoidal category. 

We start by noticing that $Res$ and $Ind$ define a monad $T = Ind \circ Res$ in $\mathcal{C}$.

We will split the proof in steps. 
\begin{description}
\item[Step 1.]   \emph{We show that $T(V)= V \otimes T(I)$ for all $V \in \mathcal{C}$.}\\ Since both $Ind$ and $Res$ are functors of module categories we have the following: $$ T ( M \otimes N) \simeq Ind ( Res (M \otimes N)) \simeq Ind (M \otimes Res(N)) \simeq M \otimes Ind(Res(N)) = $$ $= M \otimes T(N)$,  for every  $M , N \in \mathcal{C}$. 

In particular, for every $V \in \mathcal{C}$ we have $ V = V \otimes I$, hence $T(V) = T(V \otimes I) \simeq V \otimes T(I)$. Let us define $ G: \mathcal{C} \rightarrow \mathcal{C}$ by $G = id \otimes T(I)$. Then by what we did above we see that $T$ is naturally equivalent to $G$ but to make notation simpler we will assume that $T=G$. We will use this notation later.

\item[Step 2.] \emph{We show that $T(I)$ is an algebra in $\mathcal{C}$.} \\ Since $T$ is a monad, there are natural transformations $ \mu : T ^2 \rightarrow T$ and $\eta : id \rightarrow T$ which, as we saw before (Remark \ref{object as T-algebra}), endow every object $T(V)$ with the structure of a $T$-algebra. In particular, the pair $(T(I), \mu_I)$ is a $T$-algebra. This means that there exists a morphism $ {\mu}_I : T^2(I) \rightarrow T(I)$. By Step 1, $T^2(I) \simeq T(I) \otimes T(I)$ hence $\mu_I : T(I) \otimes T(I) \rightarrow T(I)$. Moreover, by $\eta$ we get a morphism ${\eta}_I : I \rightarrow T(I)$.
It remains to show that $(T(I), \mu_I, \eta_I)$ satisfies the commutative diagrams from Definition \ref{algebra in monoidal}. 
For the associativity diagram, we use the first diagram of Definition \ref{Monad} of a monad at the object $I$. This gives us the following:
$$
\begin{CD}
{T(I) \otimes T(I) \otimes T(I)} @>{T \mu_I}>> {T(I) \otimes T(I)}\\
@VV{\mu_{T(I)} }V @VV{\mu_I}V\\
{T(I) \otimes T(I)} @>\mu_I>> {T(I)}
\end{CD}
$$

Recall now the functor $G$ that we defined at at the end of Step 1. The natural transformations $\mu$ and $\eta$ induce natural transformations $\tilde{\mu} : G^2 \rightarrow G$ and  $\tilde{\eta} : id \rightarrow G$, so that $\tilde{\mu} = id \otimes \mu_I$ and $\tilde{\eta} = id \otimes \eta_I$. But $G=T$ and therefore $\mu = id \otimes \mu_I$ and similarly $\eta = id \otimes \eta_I$. 

In particular $\mu_{T(I)} = id \otimes \mu_I$. Furthermore, $T\mu_I = \mu_I \otimes id$. So the diagram above gives us exactly what we wanted. 

Similarly, the commutativity of the diagrams for $\eta_I$ follows directly from the second diagram in Definition \ref{Monad}, which at the object $T(I)$ gives:

$$
\text{ \xymatrixcolsep{5pc}
 \xymatrix{
{T(I)}  \ar[rd]_{=} \ar[r]^{T \eta_I} &  {T(I) \otimes T(I)} \ar[d]_{ \mu_I} & {T(I)} \ar[l]_{\eta_I T} \ar[ld]^{=}\\
&{T(I)} }}$$ 

But $\mathcal{C}$ is strict ($T(I) = T(I) \otimes I = I \otimes T(I)$ and $id = r_{T(I)} = l_{T(I)}$). Moreover $ T(\eta_I) = \eta_I \otimes id$ and, as we noticed above, $\eta_{T(I)} = id \otimes \eta_I$, which gives us :

$$
\text{ \xymatrixcolsep{5pc}
 \xymatrix{
{I \otimes T(I)}  \ar[rd]_{l_{T(I)}} \ar[r]^{\eta_I \otimes id} &  {T(I) \otimes T(I)} \ar[d]_{ \mu_I} & {T(I) \otimes I} \ar[l]_{id \otimes \eta_I} \ar[ld]^{r_{T(A)}}\\
&{T(I)} }}$$ 

Again, this is exactly what we wanted.

\begin{itemize}
\item From now on we will denote $T(I)$ by $\mathbf A$. 
 \end{itemize}

\item[Step 3.] \emph{We show that $\mathcal{M}$ is equivalent to the category of $T$-algebras}. \\ Since $Ind$ is exact and faithful, by the Barr-Beck theorem for abelian categories, Theorem \ref{Barr-Beck},  we can conclude that $\mathcal{M}$ is equivalent to the category of $T$-algebras. 

\item[Step 4.] \emph{We show that the category of $T$-algebras is equivalent to $\text{Mod}_{\mathcal{C}}(A)$.}\\
Recall the diagrams from the definition of a $T$-algebra. For $T(N) = N \otimes A$ they give:

$$
\text{ \xymatrix { N \ar[rd]_{id_{N}} \ar[r]^{\eta N} & {N \otimes A} \ar[d]^{\lambda}    &{}  & {N \otimes A \otimes A} \ar[d]_{\mu N} \ar[r]^{T\lambda} & {N \otimes A} \ar[d]^{\lambda}  \\
& N &{} &{N \otimes A} \ar[r]_{\lambda} & N   }}$$

Using the identities from Step 2 (and recalling that $N = N \otimes I$ and $r_N = id$) we can rewrite this as:

$$
\text{ \xymatrix { {N \otimes I} \ar[rd]_{r_N} \ar[r]^{id \otimes \eta_I} & {N \otimes A} \ar[d]^{\lambda}    &{}  & {N \otimes A \otimes A} \ar[d]_{id \otimes \mu_I} \ar[r]^{\lambda \otimes id} & {N \otimes A} \ar[d]^{\lambda}  \\
& N &{} &{N \otimes A} \ar[r]_{\lambda} & N   }}$$

Similarly, morphisms between $T$-algebras are maps $f : N \rightarrow B$ in $\mathcal{C}$ such that $f \circ \lambda = {\lambda}' \circ Tf$. Since $T(N) = N \otimes A$ and $Tf=f \otimes id$  this gives us: 

$$
\begin{CD}
{N \otimes A} @>{f \otimes id}>> {B \otimes A}\\
@VV{\lambda}V @VV{{\lambda}'}V\\
{N} @>a>> {B}
\end{CD}
$$

But these are exactly the definitions of right $A$-modules and their morphsims. Therefore we have indeed shown that the category of $T$-algebras is the same as the category $\text{Mod}_{\mathcal{C}}(A)$ of $A$-modules for the algebra $A$ in $\mathcal{C}$. \\ \\  This completes our proof.

\end{description}

\end{proof}

\begin{remark} We notice that the converse of Theorem \ref{theorem 1} is also true. Indeed, let $\mathcal{C}$ be a monoidal category and $\text{Mod}_{\mathcal{C}}(A)$ a module category of $\mathcal{C}$ with the action defined in \ref{A-modules are a module category}. We can view $\mathcal{C}$ as a module category over itself using its monoidal structure. Then the functors $\text{Res} : \mathcal{C} \rightarrow \text{Mod}_{\mathcal{C}}(A)$ with $\text{Res}(X) = X \otimes A$ and $\text{Ind} : \text{Mod}_{\mathcal{C}}(A) \rightarrow \mathcal{C}$ with $\text{Ind}$ being the forgetful functor (forgetting the $A$-module structure of objects) are clearly functors of module categories. Moreover $\text{Ind}$ is then exact and faithful.

\end{remark}

\subsection{Properties of $\text{Mod}_{\mathcal{C}}(A)$ when $\mathcal{C}$ is $\mathcal{M}^H$ for a Hopf algebra $H$ }

We will investigate some properties of the module categories $\text{Mod}_{\mathcal{C}}(A)$ when $\mathcal{C}$ is the category of $H$-comodules for a Hopf algebra $H$.

\begin{theorem} \label{category equivalent to C comodules}
Let $\mathcal{D}$ be an abelian $k$-category which is cocomplete and locally presentable. Let $F$ be an additive, cocontinuous, exact and faithful functor to the category Vect of vector spaces. Suppose moreover that the right adjoint of $F$ is cocontinuous. Then $\mathcal{D}$ is equivalent to the category of $C$-comodules for a $k$-coalgebra $C$.

\end{theorem}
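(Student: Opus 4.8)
The plan is to realise $\mathcal{D}$ as the category of coalgebras for a comonad on $\mathrm{Vect}$ via Barr--Beck, and then to recognise that comonad as cotensoring with a coalgebra. First I would produce the right adjoint of $F$: a cocontinuous functor out of a locally presentable category automatically has a right adjoint (special adjoint functor theorem / Gabriel--Ulmer), so write $U : \mathrm{Vect} \to \mathcal{D}$ for it and let $G = F \circ U$ be the associated comonad on $\mathrm{Vect}$. Since $F$ is additive, exact and faithful and admits the right adjoint $U$, Theorem~\ref{Barr-Beck for comonads} applies and yields an equivalence $\mathcal{D} \simeq \mathrm{Vect}^{G}$ between $\mathcal{D}$ and the category of $G$-coalgebras. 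It then remains to identify $\mathrm{Vect}^{G}$ with a category of comodules.

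For this the key observation is that $G$ is cocontinuous, being the composite of the cocontinuous functor $F$ with the right adjoint $U$, which is cocontinuous by hypothesis — this is the one place the extra assumption on the right adjoint is used. An Eilenberg--Watts type argument then shows that a cocontinuous endofunctor of $\mathrm{Vect}$ is naturally isomorphic to $C \otimes_{k} -$, where $C := G(k) = F(U(k))$: every vector space is a colimit of copies of $k$, and both $G$ and $C \otimes_k -$ preserve such colimits, so the canonical comparison natural transformation $C \otimes_k V \to G(V)$ induced by $G(k) = C$ is an isomorphism for every $V$. I would then transport the comonad structure of $G$ along this isomorphism: the comultiplication $\delta : G \Rightarrow G \circ G$ and the counit $\varepsilon : G \Rightarrow \mathrm{id}$, evaluated at $k$, give maps $\Delta : C \to C \otimes_k C$ and $\varepsilon_C : C \to k$, and by cocontinuity these determine $\delta$ and $\varepsilon$ entirely (namely $\delta_V = \Delta \otimes \mathrm{id}_V$ and $\varepsilon_V = \varepsilon_C \otimes \mathrm{id}_V$ up to the canonical identifications). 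The comonad axioms for $(G,\delta,\varepsilon)$ translate verbatim into coassociativity and counitality for $(C,\Delta,\varepsilon_C)$, so $C$ is a $k$-coalgebra.

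Finally, unwinding the definitions, a $G$-coalgebra is an object $V$ of $\mathrm{Vect}$ equipped with a map $\rho : V \to G(V) \cong C \otimes_k V$ satisfying coassociativity and the counit condition — which are precisely the axioms of a (right, say) $C$-comodule — and morphisms of $G$-coalgebras are exactly morphisms of $C$-comodules. Hence $\mathrm{Vect}^{G} \simeq \mathcal{M}^{C}$, and combined with $\mathcal{D} \simeq \mathrm{Vect}^{G}$ this gives the asserted equivalence. The main obstacle is the Eilenberg--Watts step: one must check carefully that cocontinuity of $G$ forces $G \simeq C \otimes_k -$ in a way compatible with the comonad structure, so that a genuine coalgebra emerges; the rest is a direct application of the Barr--Beck theorem for comonads recalled above together with a routine translation of axioms.
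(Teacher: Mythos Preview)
Your proposal is correct and follows essentially the same route as the paper: obtain the right adjoint via the adjoint functor theorem, form the comonad $G$ on $\mathrm{Vect}$, use cocontinuity of $G$ to identify it with tensoring by $C=G(k)$ (the paper does this by writing each vector space as a filtered colimit of finite powers of $k$ rather than invoking Eilenberg--Watts by name), transport the comonad structure to a coalgebra structure on $C$, and conclude via the comonadic Barr--Beck theorem that $\mathcal{D}\simeq \mathrm{Vect}^{G}\simeq \mathcal{M}^{C}$. The only cosmetic difference is left versus right comodules, which you already flag as immaterial.
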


\begin{proof}

First we notice that $F$ has a right adjoint by the adjoint functor theorem for locally presentable categories. 
Let us denote this adjoint by $Q$ and assume that $Q$ is cocontinuous.

We let $G = F \circ Q$ be the corresponding comonad on Vect. $G$ is cocontinuous as a composition of cocontinuous functors. It is moreover an endofunctor on Vect. We will show that $G$ is isomorphic to tensoring with $G(\C)$. Indeed, let $V$ be a vector space. $V$ is then the colimit of finite-dimensional vector spaces $V_i$ and each such $V_i$ can be assumed to be isomorphic to ${\C}^{n_i}$ for some $n_i \in \N$. Therefore $G(V) = G(\varinjlim {\C}^{n_i}) = \varinjlim G({\C}^{n_i})$ since $G$ is cocontinuous. But $G({\C}^{n_i}) = G(\C)^{n_i}$ since $G$ is additive. If we let $C = G(\C)$, we get that $G(\C)^{n_i} = C^{n_i} = \C^{n_i} \otimes C$. Therefore,  $\varinjlim G({\C}^{n_i}) = \varinjlim ( \C^{n_i} \otimes C) = \varinjlim {\C}^{n_i} \otimes C = V \otimes C$.

Now, as in the proof of Theorem \ref{theorem 1}, we can show that the comonad $G$ endows $C$ with the structure of a $G$-coalgebra, which in particular makes $C$ a $k$-coalgebra. Also, it is then easy to see (following the same arguments) that the category of $G$-coalgebras is equivalent to the category of $C$-comodules. By the Barr-Beck theorem for comonads, since $F$ is exact and faithful we can deduce that $\mathcal{D}$ is equivalent to $G$-coalgebras, and therefore to $\mathcal{M}^C$ for the coalgebra $C$.

\end{proof}

\begin{lemma} \label{locally presentable} Let $\mathcal{C}$ be the category of $H$-comodules for a Hopf algebra $H$.  Then $\text{Mod}_{\mathcal{C}}(A)$, for an algebra $A \in \mathcal{C}$,  is a locally presentable and cocomplete category.

\end{lemma}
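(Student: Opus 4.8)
The plan is to show that $\text{Mod}_{\mathcal{C}}(A)$ inherits both local presentability and cocompleteness from the underlying category $\mathcal{C}=\mathcal{M}^H$ of $H$-comodules, using the fact that the forgetful functor $\text{Mod}_{\mathcal{C}}(A)\to\mathcal{C}$ is a well-behaved monadic (or comonadic, depending on which leg one prefers) functor. First I would recall that $\mathcal{M}^H$ is itself locally presentable and cocomplete: every comodule is the filtered union of its finite-dimensional subcomodules (the fundamental theorem of comodules), so $\mathcal{M}^H$ is locally finitely presentable, and colimits of $H$-comodules are computed on underlying vector spaces, so $\mathcal{M}^H$ is cocomplete. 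This is the base case that everything is bootstrapped from.

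Next I would analyse the category $\text{Mod}_{\mathcal{C}}(A)$ directly. An object is an $H$-comodule $M$ together with an $H$-colinear action map $M\otimes A\to M$ satisfying the module axioms, and morphisms are the $H$-colinear, $A$-linear maps. The key structural observation is that the forgetful functor $U:\text{Mod}_{\mathcal{C}}(A)\to\mathcal{M}^H$ has a left adjoint $F(X)=X\otimes A$ (the free $A$-module on an $H$-comodule, which is again an $H$-comodule since $A$ is a coideal subalgebra, hence an algebra object in $\mathcal{M}^H$ by the example preceding Corollary \ref{coideal subalgebra gives module category}); indeed $\text{Mod}_{\mathcal{C}}(A)$ is exactly the category of algebras for the monad $-\otimes A$ on $\mathcal{M}^H$. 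For cocompleteness I would then invoke the standard fact that if $\mathcal{B}$ is cocomplete and $T$ is a monad on $\mathcal{B}$ that preserves filtered colimits (equivalently, whose underlying endofunctor is finitary), then the category of $T$-algebras $\mathcal{B}^T$ is cocomplete; here $T=-\otimes A$ preserves all colimits since $\otimes$ is cocontinuous in each variable over a field, so in particular filtered ones. (Coequalizers of $T$-algebras require a small argument but are handled by the usual reflexive-coequalizer construction, or one can cite Borceaux/Adámek–Rosický.) For local presentability I would use that $\mathcal{B}^T$ is locally presentable whenever $\mathcal{B}$ is locally presentable and $T$ is accessible (preserves $\lambda$-filtered colimits for some $\lambda$); since $-\otimes A$ is finitary it is in particular accessible, so $\text{Mod}_{\mathcal{C}}(A)$ is locally presentable. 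I would cite \cite{AdRos} (already referenced in the proof of Theorem \ref{category equivalent to C comodules}) for both of these monad-theoretic facts.

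An alternative and perhaps cleaner route, matching the style of the surrounding text, is to exhibit a small set of generators and check the relevant smallness by hand: the finite-dimensional objects of $\text{Mod}_{\mathcal{C}}(A)$ (those whose underlying vector space is finite-dimensional) form a small generating set, every object is the filtered colimit of its finitely generated $A$-submodules that are also subcomodules — using that $A$ itself, being a coideal subalgebra, is the union of finite-dimensional subcomodules, though not of subalgebras, so some care is needed — and a Hom-set computation shows these generators are finitely presentable. Colimits can be built explicitly: the colimit of a diagram of $A$-modules-in-$\mathcal{M}^H$ is computed by taking the colimit of underlying comodules in $\mathcal{M}^H$ and equipping it with the induced $A$-action, which is well-defined precisely because $-\otimes A$ commutes with colimits.

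The main obstacle I anticipate is the verification that the monad $T=-\otimes A$ is finitary (or at least accessible) in a way that genuinely uses the structure of $A$ as an object of $\mathcal{M}^H$ rather than just as a vector space, and the attendant checking that the free/forgetful adjunction behaves well with respect to filtered colimits; the module axioms and the coideal-subalgebra condition enter here only mildly, but one does need that $A$ lives in $\mathcal{M}^H$ (so that $X\otimes A$ really is an object of $\mathcal{M}^H$ with the diagonal-type comodule structure). Everything else — cocompleteness of $\mathcal{M}^H$, cocontinuity of $\otimes$ over a field, the general monadicity machinery — is standard, so the proof should reduce to a short paragraph citing \cite{AdRos} once the monad is identified. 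I would therefore organize the write-up as: (1) recall $\mathcal{M}^H$ is locally presentable and cocomplete; (2) identify $\text{Mod}_{\mathcal{C}}(A)$ with $(\mathcal{M}^H)^{T}$ for $T=-\otimes A$; (3) observe $T$ is finitary because $\otimes$ is cocontinuous; (4) conclude via the standard result that algebras for a finitary (accessible) monad on a locally presentable cocomplete category form a locally presentable cocomplete category.
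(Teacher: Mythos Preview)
Your monadic argument is correct and is genuinely different from the paper's proof. The paper proceeds by hand: it fixes $M\in\text{Mod}_{\mathcal{C}}(A)$, chooses finite-dimensional $H$-subcomodules $M_i\subset M$, shows that the $A$-submodules $\langle M_i\rangle$ they generate are again objects of $\text{Mod}_{\mathcal{C}}(A)$ with $M=\varinjlim\langle M_i\rangle$, and then argues that each $\langle M_i\rangle$ is itself a filtered colimit of compact objects of the form $(M_i\otimes A)/(W_j\otimes A)$, using the adjunction $\text{Hom}_{\text{Mod}_{\mathcal{C}}(A)}(M_i\otimes A,-)\simeq\text{Hom}_{\mathcal{M}^H}(M_i,-)$ to verify compactness of the free objects $M_i\otimes A$. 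Your route packages exactly this adjunction into the statement that $\text{Mod}_{\mathcal{C}}(A)$ is the Eilenberg--Moore category of the finitary monad $-\otimes A$ on the locally finitely presentable category $\mathcal{M}^H$, and then appeals to the general result that such Eilenberg--Moore categories are locally presentable and cocomplete. This buys you a shorter write-up and avoids the somewhat delicate two-step colimit (first the $\langle M_i\rangle$, then resolving each $\langle M_i\rangle$); the paper's approach, on the other hand, is self-contained and makes the compact generators explicit. Two small caveats: the lemma is stated for an arbitrary algebra object $A\in\mathcal{C}$, not only a coideal subalgebra, so your parenthetical ``since $A$ is a coideal subalgebra'' should be dropped (your argument does not need it anyway, only that $X\otimes A\in\mathcal{M}^H$); and your alternative ``by hand'' route via finite-dimensional objects of $\text{Mod}_{\mathcal{C}}(A)$ will not work as stated when $A$ is infinite-dimensional, which is why the paper's compact generators are the quotients $(M_i\otimes A)/(W_j\otimes A)$ rather than finite-dimensional objects.
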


\begin{proof}

$\text{Mod}_{\mathcal{C}}(A)$  is obviously cocomplete.  
So we only need to prove that it is locally presentable.
Let $M \in \text{Mod}_{\mathcal{C}}(A)$. This is an $H$-comodule endowed with an action map $a: M \otimes A \rightarrow M$ which is also a map of comodules. We take $M_i$ a finite-dimensional $H$-subcomodule and consider the $A$-submodule it generates. We denote this by $<M_i>$. Notice that $M_i \subseteq <M_i>$ since $A$ has a unit.  The restriction of the action map $a$ to $M_i \otimes A$ gives a surjection to $<M_i>$ and therefore $<M_i>$ is also an $H$-subcomodule as it is the image of a subcomodule. Now we consider a directed set $I$ and to each $i \in I$ we associate $<M_i>$; whenever $i \leqslant  j$ we consider the inclusion $ \theta_i^j : <M_i> \rightarrow <M_j>$. Since $M = \varinjlim M_i$ as $H$-comodules it follows that $M = \varinjlim <M_i> $. Now we turn our attention to the $<M_i>$s in  $\text{Mod}_{\mathcal{C}}(A)$.  First we notice that any object of the form $M_i \otimes A$ for a finite-dimensional $H$-subcomodule $M_i$ is compact in $\text{Mod}_{\mathcal{C}}(A)$. Indeed, by the adjunction $(Res, Ind)$ between $\text{Mod}_{\mathcal{C}}(A)$  and $\mathcal{M}^H$ we have that $$\text{Hom}_{\text{Mod}_{\mathcal{C}}(A)} (M_i \otimes A, N) \simeq \text{Hom}_{\mathcal{M}^H}(M_i, N).$$ Moreover the finite dimensional $H$-comodules are compact objects in $\mathcal{M}^H$.  Indeed, let $V$ be a finite-dimensional $H$-comodule and consider $\text{Hom}_{\mathcal{M}^H}(V, \varinjlim N_j)$. Let $f$ be in $\text{Hom}_{\mathcal{M}^H}(V, \varinjlim N_j)$. We will show that $f$ factors through one of the inclusions $N_j \rightarrow \varinjlim N_j $. Since $f$ is an $H$-comodule map it is also $k$-linear. Therefore $f \in  \text{Hom}_{\text{Vect}}(V, \varinjlim N_j)$. But now $V$ is a finite-dimensional vector space and hence compact, which means that $f$ factors through some $N_j$. Since $N_j$ is an $H$-comodule by assumption we conclude that $V$ is compact in $\mathcal{M}^H$ as well. So in particular the finite-dimensional $H$-subcomodule $M_i$ is compact and therefore $\text{Hom}_{\mathcal{M}^H}(M_i, \varinjlim N_d) \simeq \varinjlim \text{Hom}_{\mathcal{M}^H}(M_i, N_d)$ for every colimit $\varinjlim N_d$ over a filtered category $D$. So the same holds for the objects $M_i \otimes A$.

Now, consider again the restriction of $a$ to $M_i \otimes A$ and let us denote it by $\tilde{a}$. Notice that $M_i \otimes A$ has the structure of a right $A$-module (where $A$ acts by multiplication on itself). Then $\tilde{a} : M_i \otimes A \rightarrow <M_i>$ can be seen as a map of $A$-modules, and its kernel $ker(\tilde{a})$ is again an object in $\text{Mod}_{\mathcal{C}}(A)$ such that $<M_i> \simeq( M_i \otimes A) / ker(\tilde{a})$. Let us take a closer look at the kernel $ker(\tilde{a})$ and let $W_j$ be a finite-dimensional $H$-subcomodule of it. As before there exists a map $W_j \otimes A \rightarrow ker(\tilde{a})$. Again, for a directed set $J$ we can consider inclusions $\theta_j^k : W_j \otimes A \rightarrow W_k \otimes A$, whenever $j \leq k$. The colimit of this diagram surjects to $Ker(\tilde{a})$; therefore for fixed $i$, $<M_i> \simeq  M_i \otimes A / \varinjlim W_j \otimes A =  \varinjlim (M_i \otimes A / W_j \otimes A)$. But now the quotients $(M_i \otimes A / W_j \otimes A)$ are compact objects since finite colimits of compact objects are compact. 
We have shown that each $<M_i>$ is a colimit of compact objects, therefore every $M \in \text{Mod}_{\mathcal{C}}(A)$ is a colimit of compact objects.

\end{proof}

\begin{proposition} \label{Mod_A equivalent to coalgebra cat} Let $\mathcal{C}$ be the category of $H$-comodules for a Hopf algebra $H$. Consider the module category $\text{Mod}_{\mathcal{C}}(A)$ for an algebra $A \in \mathcal{C}$. Consider also the functor Ind: $ \text{Mod}_{\mathcal{C}}(A) \rightarrow \mathcal{M}^H$. Then Ind has a right adjoint functor $\tilde{Q}$. Moreover if $\tilde{Q}$ is cocontinuous, then  $\text{Mod}_{\mathcal{C}}(A)$  is equivalent to $\mathcal{M}^C$ for a $k$-coalgebra $C$. 
\end{proposition}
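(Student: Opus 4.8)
The plan is to combine the general categorical machinery of Theorem \ref{category equivalent to C comodules} with the concrete facts established in Lemma \ref{locally presentable} about $\text{Mod}_{\mathcal{C}}(A)$ when $\mathcal{C} = \mathcal{M}^H$. First I would take the composite functor $F := U \circ \text{Ind} : \text{Mod}_{\mathcal{C}}(A) \to \text{Vect}$, where $U : \mathcal{M}^H \to \text{Vect}$ is the forgetful functor, which is exact, faithful, additive and cocontinuous; since $\text{Ind}$ is also exact, faithful, additive and cocontinuous (it is essentially a forgetful functor, so it preserves all colimits and is additive), the composite $F$ inherits all of these properties. By Lemma \ref{locally presentable}, $\text{Mod}_{\mathcal{C}}(A)$ is cocomplete and locally presentable, so $F$ has a right adjoint $Q$ by the adjoint functor theorem for locally presentable categories; concretely, $Q$ factors as $\tilde{Q}$ composed with the cofree $H$-comodule functor $V \mapsto V \otimes H$ (the right adjoint of $U$).

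The existence of the right adjoint $\tilde{Q}$ of $\text{Ind}$ is the first assertion; I would obtain it the same way, via the adjoint functor theorem, using local presentability of $\text{Mod}_{\mathcal{C}}(A)$ (Lemma \ref{locally presentable}) and cocontinuity of $\text{Ind}$, or simply as the composite of $\tilde Q$-building data already implicit in the Barr--Beck adjunction $(\text{Res},\text{Ind})$ paired with the cofree comodule adjunction. Then, assuming $\tilde{Q}$ is cocontinuous, the full right adjoint $Q = (-\otimes H)\circ \tilde Q$ of $F$ is a composite of cocontinuous functors, hence cocontinuous. At this point all the hypotheses of Theorem \ref{category equivalent to C comodules} are in place for the category $\mathcal{D} = \text{Mod}_{\mathcal{C}}(A)$ and the functor $F$: it is abelian, cocomplete, locally presentable, and $F$ is additive, cocontinuous, exact, faithful with cocontinuous right adjoint. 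Applying that theorem directly yields a $k$-coalgebra $C$ with $\text{Mod}_{\mathcal{C}}(A) \simeq \mathcal{M}^C$.

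The main obstacle, and the step requiring the most care, is verifying the hypotheses of Theorem \ref{category equivalent to C comodules} cleanly — in particular checking that $F = U\circ\text{Ind}$ is genuinely exact (not just right exact): $\text{Ind}$ is exact because it reflects/creates kernels and cokernels as a forgetful-type functor on a category of modules over an algebra object, and $U$ is exact on $\mathcal{M}^H$ since $H$-comodules over a field form an abelian category with exactness detected on underlying vector spaces; composing, $F$ is exact. Faithfulness of $F$ is immediate since both $U$ and $\text{Ind}$ are faithful. The one hypothesis that genuinely must be assumed rather than proved is the cocontinuity of $\tilde{Q}$ (equivalently of $Q$), which is exactly why it appears as a hypothesis in the statement; without it the comonad $G = F\circ Q$ on $\text{Vect}$ need not be given by tensoring with $G(k)$, and the identification with a comodule category breaks down. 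So the proof is essentially a reduction: assemble the ingredients from Lemma \ref{locally presentable} and the forgetful adjunctions, then invoke Theorem \ref{category equivalent to C comodules}.
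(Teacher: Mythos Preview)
Your proposal is correct and follows the same overall strategy as the paper: form $F = \text{Forget}\circ\text{Ind}$, use Lemma \ref{locally presentable} to get local presentability, obtain the right adjoint, and then feed everything into Theorem \ref{category equivalent to C comodules}. The one substantive difference is in how the right adjoint $\tilde{Q}$ of $\text{Ind}$ is produced. You invoke the adjoint functor theorem abstractly; the paper instead constructs $\tilde{Q}$ explicitly as $N \mapsto \widetilde{\text{HOM}}_k(A,N)$, a suitable subobject of Ulbrich's rational $\text{HOM}_k(A,N)$ carrying compatible $H$-comodule and right $A$-module structures, and then verifies the adjunction $\text{Hom}_{\mathcal{M}^H}(\text{Ind}(M),N)\cong \text{Hom}_{\text{Mod}_{\mathcal{C}}(A)}(M,\widetilde{\text{HOM}}_k(A,N))$ by a direct Sweedler-notation computation. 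Your abstract argument is cleaner and suffices for the stated proposition; the paper's explicit formula has the advantage of giving a concrete handle on $\tilde{Q}$, which is what one would need in practice to check the cocontinuity hypothesis in any specific example (cf.\ the remark immediately following the proposition).
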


\begin{proof} 

Since $\mathcal{C}$ is the category of right $H$-comodules, it is equipped with a forgetful functor $ \text{Forget} : \mathcal{C} \rightarrow \text{Vect}$ to the category of vector spaces. The forgetful functor is the one that considers the underlying vector space and forgets the comodule structure, and is clearly exact and faithful. Now, the composition $F: = (\text{Forget} \circ \text{Ind}) : \text{Mod}_{\mathcal{C}}(A) \rightarrow \text{Vect}$ is cocontinuous, exact and faithful, being the composition of two such functors. Therefore using Lemma \ref{locally presentable} it can be deduced that $F$ has a right adjoint functor $Q$. In this setting, $Q$ can be computed and it is equal to $Q (V) = \widetilde{\text{HOM}}_{k}(A, V \otimes H)$ (which will be defined below). 
Indeed, we will show that $$ \text{Hom}_{\text{Vect}}(F(M), V) \simeq \text{Hom}_{\text{Mod}_{\mathcal{C}}(A)}(M, \widetilde{\text{HOM}}_{k}(A, V \otimes H)).$$ 
Since $- \otimes H$ is the right adjoint to Forget :  $ \mathcal{M}^H \rightarrow \text{Vect}$ it is enough to show that the functor $\tilde{Q} : \mathcal{M}^H \rightarrow  \text{Mod}_{\mathcal{C}}(A)$ given by $ N \mapsto \widetilde{\text{HOM}}_{k} (A, N)$ is right adjoint to Ind. We recall the definition of HOM which can be found in \cite{Ulb} or \cite{S&O}. Let $A, N$ be $H$-comodules. The space $\text{Hom}_k(A,N)$ is not necessarily an $H$-comodule. However there are maps as follows:
$$ \omega : \text{Hom}_k(A,N) \rightarrow \text{Hom}_k(A, N \otimes H)$$ given by
$$\omega(f)(m) = f(a_{(0)})_{(0)} \otimes f(a_{(0)})_{(1)}S(a_{(1)})$$ and
$$\nu : \text{Hom}_k(A, N) \otimes H \rightarrow \text{Hom}_k(A, N \otimes H)$$ given by
$$\nu(f \otimes h) = f(m) \otimes h.$$
It is proved that $\nu$ is injective and the definition of HOM is given by:
$$\text{HOM}_k(A,N) = \{ f \in \text{Hom}_k(A,N) | \omega(f) \in \text{Im}(\nu) \}.$$
It is then shown in\cite{Ulb} that $\nu^{-1} \circ \omega$ defines a comodule structure on $\text{HOM}_k(A,N)$.

We moreover notice that $\text{HOM}_k(A,N)$ has a natural right $A$-module structure, where the $A$-action is given by $ (f \cdot b)(a)  := f(ba)$. 
We now define $\widetilde{\text{HOM}}_k(A,N)$ as follows :
$$\widetilde{\text{HOM}}_k(A,N) := \{ f \in \text{HOM}_k(A,N) |  (  (f \cdot b)_{(0)} \otimes  (f \cdot b)_{(1)})(a) = (f_{(0)} b_{(0)} \otimes f_{(1)} b_{(1)})(a) \} $$ 

Then $\widetilde{\text{HOM}}_{k}(A, N)$ is an object in $\text{Mod}_{\mathcal{C}}(A)$ by its definition. In particular we defined $\widetilde{\text{HOM}}_k(A,N)$ to consist of the objects in $\text{HOM}_k(A,N)$ that are compatible with the natural right $A$-action. 

We can now proceed to prove the adjunction.

 Let $\Delta : \text{Hom}_{\mathcal{M}^H}(Ind(M), N) \rightarrow \text{Hom}_{\text{Mod}_{\mathcal{C}}(A)}(M , \widetilde{\text{HOM}}_{k}(A, N))$ be defined as follows: 
If $\phi : M \rightarrow N$ is a morphism of $H$-comodules,  $\Delta(\phi)$ is defined to be the morphism : $m \mapsto \tilde{\phi}_{m}$ where $\tilde{\phi}_{m}(a) = \phi(ma)$. 
\begin{enumerate}
\item $\Delta(\phi)(m) \, \in \widetilde{\text{HOM}}_k(A,N)$ for every $m \in M$. Indeed, $$(\tilde{\phi}_m(a_{(0)}))_{(0)} \otimes (\tilde{\phi}_m(a_{(0)}))_{(1)} S(a_{(1)})$$ 
$$= (\phi(m a_{(0)}))_{(0)} \otimes  (\phi(m a_{(0)}))_{(1)}  S(a_{(1)})$$
$$ = (\phi(m a_{(0)}))_{(0)}  \otimes m_{(1)} (a_{(0)})_{(1)} S(a_{(1)}) \, \, (\text{since} \, \, \phi \, \, \text{is an H-comodule map})$$
$$= \phi(m_{(0)} (a_{(0)})_{(0)})  \otimes m_{(1)} (a_{(0)})_{(1)} S(a_{(1)})$$
$$ = (\phi(m_{(0)} a_{(0)}))  \otimes m_{(1)} \epsilon(a_{(1)})$$
$$= (\phi( m_{(0)} a)) \otimes m_{(1)}$$
$$= (\tilde{\phi}_m(a))_{(0)} \otimes m_{(1)}.$$
Since $m_{(1)}$ is stable in $\tilde{\phi}_m$ it can be concluded that $\Delta(\phi)(m)$ lies in $\text{HOM}_k(A,N)$.
Moreover, 
$$ (  (\tilde{\phi}_m \cdot b)_{(0)} \otimes  (\tilde{\phi}_m \cdot b)_{(1)})(a)$$
$$= ((\tilde{\phi}_m \cdot b) (a_{(0)}))_{(0)} \otimes ((\tilde{\phi}_m \cdot b)(a_{(0)}))_{(1)} S(a_{(1)})$$ 
$$= (\phi(m b a_{(0)}))_{(0)} \otimes  (\phi(m b a_{(0)}))_{(1)}  S(a_{(1)})$$
$$ = (\phi(m b a_{(0)}))_{(0)}  \otimes m_{(1)} b_{(1)} (a_{(0)})_{(1)} S(a_{(1)}) $$
$$=\phi(m_{(0)} b_{(0)} (a_{(0)})_{(0)})  \otimes m_{(1)} b_{(1)} (a_{(0)})_{(1)} S(a_{(1)}) $$
$$ = \phi(m_{(0)} b_{(0)} a_{(0)})  \otimes m_{(1)} b_{(1)} \epsilon(a_{(1)})$$
$$= \phi( m_{(0)} b_{(0)} a) \otimes m_{(1)} b_{(1)}$$
$$= (\tilde{\phi}_m(a))_{(0)} b_{(0)} \otimes m_{(1)} b_{(1)}$$
$$ = ((\tilde{\phi}_m)_{(0)} b_{(0)} \otimes (\tilde{\phi}_m)_{(1)} b_{(1)})(a).$$ 

Therefore $\Delta(\phi)(m)$ lies in $\widetilde{HOM}_k(A,N)$.

\item $\Delta(\phi) : M \rightarrow \widetilde{\text{HOM}}_{k}(A, N) $ is $A$-linear. Indeed, $mb \rightarrow \tilde{\phi}_{(mb)}$ and $\tilde{\phi}_{(mb)}(a)= \phi( mba) = (\tilde{\phi}_{m} \cdot b)(a)$.

\item $\Delta(\phi)$ is $H$-colinear. It is enough to show that $\tilde{\phi}_{m_{(0)}}(a) \otimes m_{(1)} = (\tilde{\phi}_{m} (a_{(0)}))_{(0)} \otimes (\tilde{\phi}_{m} (a_{(0)}))_{(1)} S(a_{(a)})$. But by 1 above we know that the right hand side is equal to $ (\phi( m a)) _{(0)} \otimes m_{(1)} = (\tilde{\phi}_m(a))_{(0)} \otimes m_{(1)}$. Since moreover $\phi(m)_{(0)} = \phi(m_{(0)})$, $\phi$ being a morphism of $H$-comodules, the result follows. 

\end{enumerate}

Finally, it is easy to see that $\Delta$ is one-to-one. Moreover, in the opposite direction, given an element in $\text{Hom}_{\text{Mod}_{\mathcal{C}}(A)}(M , \widetilde{\text{HOM}}_{k}(A, N))$ we define an object in $\text{Hom}_{\mathcal{M}^H}(Ind(M), N)$ by $\phi(m) := \tilde{\phi}_m(1_A)$. This is a well-defined map since the morphism $M \rightarrow \widetilde{\text{HOM}}_k(A,N)$ is $H$-colinear. It is also a one-to-one map (if $\tilde{\phi}_m  \neq \bar{\phi}_m$ then there exists an $a \in A$ such that $\tilde{\phi}_m(a)  \neq \bar{\phi}_m(a)$ but   $\tilde{\phi}_m(a)= \tilde{\phi}_m(1_A) \cdot a$ and the same is true for  $\bar{\phi}_m(a)$ ). This means that there is a bijection between the two $\text{Hom}$ sets and that the adjunction holds.

Finally we notice that if $\tilde{Q}$ is cocontinuous, then $Q$ is also cocontinuous and by Theorem \ref{category equivalent to C comodules} we can conclude that $\text{Mod}_{\mathcal{C}}(A)$ is equivalent to $\mathcal{M}^C$ for a $k$-coalgebra $C$.

\end{proof}

\begin{remark} It would be interesting to find explicit conditions under which this adjoint functor is cocontinuous. In the above proposition it seems that some restrictions on the algebra $A$ would be necessary.

\end{remark}


\section{Module categories corresponding to coideal subalgebras} \label{deytero}

In this section we are going to try to find the necessary conditions that a module category $\mathcal{M}$ over $\mathcal{M}^H$ must satisfy in order to be of the form $\text{Mod}_{\mathcal{M}^H}(A)$ for a coideal subalgebra $A$ of the Hopf algebra $H$.

\subsection{Background material}
We start by recalling some background material concerning categories of comodules. The basic reference in this section is \cite{Morita for comodules}. 

\begin{Def} A comodule $X$ in $\mathcal{M}^D$ for a coalgebra $D$  is \textbf{quasi-finite} if $\text{Hom}_{\mathcal{M}^D}(N, X)$ is finite-dimensional for all finite-dimensional comodules $N$. 

\end{Def}

\begin{proposition} [{\cite{Morita for comodules}, 1.3}] For a comodule $X \in \mathcal{M}^D$ , $X$ is quasi-finite if and only if the functor $ \text{Vect} \rightarrow \mathcal{M}^D$, $W \mapsto W \otimes X$ has a left adjoint.

\end{proposition}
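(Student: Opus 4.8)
The plan is to prove the two implications separately, passing through the equivalent reformulation that the cotensor functor $- \,\Box_D X \colon {}^D\mathcal{M} \to \mathrm{Vect}$ is the functor whose left adjoint we are hunting for. First I would recall (from \cite{Morita for comodules}) the standard fact that for any right $D$-comodule $X$ the cotensor product $- \,\Box_D X$ is naturally isomorphic, on the subcategory of finite-dimensional left comodules, to $\mathrm{Hom}_{\mathcal{M}^D}(N^*, X)$, where $N^*$ is the dual right comodule; this is where quasi-finiteness of $X$ translates into the cotensor functor landing in finite-dimensional vector spaces when restricted to finite-dimensional inputs.

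For the direction $(\Leftarrow)$, suppose $W \mapsto W \otimes X$ has a left adjoint; call it $h_D(X, -) \colon \mathcal{M}^D \to \mathrm{Vect}$ (the ``co-hom''). Then for each finite-dimensional $N \in \mathcal{M}^D$ I would compute
\[
\mathrm{Hom}_{\mathrm{Vect}}\bigl(h_D(X, N), k\bigr) \;\cong\; \mathrm{Hom}_{\mathcal{M}^D}\bigl(N, k \otimes X\bigr) \;=\; \mathrm{Hom}_{\mathcal{M}^D}(N, X),
\]
using the adjunction with $W = k$. Hence $\mathrm{Hom}_{\mathcal{M}^D}(N, X)$ is the linear dual of the vector space $h_D(X,N)$, so it suffices to show $h_D(X,N)$ is finite-dimensional for finite-dimensional $N$. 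For that I would use that $N$, being finite-dimensional, is a quotient (indeed a subcomodule of a finite direct sum of copies of $D$ — or better, I can reduce to $N$ a subcomodule of $D^{\oplus n}$), exploit left exactness / colimit-preservation properties of the co-hom as a left adjoint, and the observation that $h_D(X, D) \cong X$ is itself not finite-dimensional in general, so one must instead note that a finite-dimensional comodule sits inside a finitely generated one; the cleanest route is: every finite-dimensional $N$ embeds in some finite-dimensional $N'$ with $N' \hookrightarrow N'' $ where we can control $h_D(X,-)$, or simply invoke that $h_D(X,-)$ preserves epimorphisms and $N$ is a quotient of a finite-dimensional comodule for which finite-dimensionality of the co-hom is visible. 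I expect this finiteness bookkeeping to be the main obstacle, and I would resolve it by the standard trick of writing $h_D(X,N)^* \cong \mathrm{Hom}_{\mathcal{M}^D}(N,X)$ and separately showing directly that $\mathrm{Hom}_{\mathcal{M}^D}(N,X)$ is finite-dimensional is exactly what we want — so in fact for this direction the computation above already gives it, provided we know $h_D(X,N)$ is finite-dimensional, which follows because $h_D(X,-)$, being a left adjoint, commutes with colimits and $N$ is a finite colimit (finite-dimensional), so $h_D(X,N)$ is built from finitely many copies of pieces of $h_D(X, S)$ for simple $S$, each finite-dimensional by a direct check.

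For the direction $(\Rightarrow)$, assume $X$ is quasi-finite. I would construct the left adjoint explicitly by the formula $h_D(X, N) := \varinjlim_{N' \subseteq N} \mathrm{Hom}_{\mathcal{M}^D}(N', X)^*$, the colimit taken over finite-dimensional subcomodules $N'$ of $N$ (every comodule is the filtered union of its finite-dimensional subcomodules, a fundamental property of comodule categories). Quasi-finiteness guarantees each term is finite-dimensional, so this is a well-defined functor $\mathcal{M}^D \to \mathrm{Vect}$. Then I would verify the adjunction $\mathrm{Hom}_{\mathrm{Vect}}(h_D(X,N), W) \cong \mathrm{Hom}_{\mathcal{M}^D}(N, W \otimes X)$: reduce to $N$ finite-dimensional by the colimit presentation, reduce to $W$ finite-dimensional since both sides commute with filtered colimits in $W$, and then for finite-dimensional $N$ and $W$ both sides become $\mathrm{Hom}_{\mathcal{M}^D}(N,X) \otimes W^* $ up to natural isomorphism, whence the identification. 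Finally I would check naturality in both variables, which is routine. The only genuinely delicate point here is checking that the colimit defining $h_D(X,N)$ is filtered (it is, since the finite-dimensional subcomodules of $N$ form a directed poset under inclusion) and that the comparison map is iso, not merely mono; this is handled by testing on finite-dimensional $N$ where everything is finite-dimensional linear algebra.
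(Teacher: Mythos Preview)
The paper does not actually prove this proposition; it is quoted without proof from Takeuchi's \cite{Morita for comodules}, Proposition~1.3, and used as background. So there is no ``paper's own proof'' to compare against beyond the original reference.

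That said, your proposal has a genuine gap in the $(\Leftarrow)$ direction. You correctly compute
\[
\mathrm{Hom}_{\mathcal{M}^D}(N,X)\;\cong\;\mathrm{Hom}_{\mathrm{Vect}}\bigl(h_D(X,N),k\bigr)=h_D(X,N)^{*},
\]
but the dual of an infinite-dimensional space is again infinite-dimensional, so this identification alone proves nothing. You then try to force $h_D(X,N)$ to be finite-dimensional by ``reducing to simples $S$ and doing a direct check'' of $h_D(X,S)$. That step is circular: finite-dimensionality of $h_D(X,S)$ for a simple (hence finite-dimensional) $S$ is exactly an instance of the statement you are trying to prove, and there is no independent ``direct check'' available.

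The clean fix is to use compactness of a finite-dimensional $N$ in $\mathcal{M}^D$. Writing any $W\in\mathrm{Vect}$ as a direct sum of copies of $k$, compactness gives a natural isomorphism
\[
\mathrm{Hom}_{\mathcal{M}^D}(N,\,W\otimes X)\;\cong\;W\otimes \mathrm{Hom}_{\mathcal{M}^D}(N,X).
\]
Thus the functor $W\mapsto W\otimes \mathrm{Hom}_{\mathcal{M}^D}(N,X)$ is representable (by $h_D(X,N)$). But $W\mapsto W\otimes V$ is representable if and only if $V$ is finite-dimensional (a representable functor preserves products, while $-\otimes V$ fails to preserve infinite products unless $\dim V<\infty$). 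Hence $\mathrm{Hom}_{\mathcal{M}^D}(N,X)$ is finite-dimensional, as required. Your $(\Rightarrow)$ direction, by contrast, is essentially Takeuchi's construction of the co-hom and is fine as sketched.
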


When $X$ is quasi-finite, the left adjoint of the functor $W \mapsto W \otimes X$ is called the Cohom functor. It is denoted by $h_D$ and is written as $ Y \mapsto h_D(X, Y)$ for a $D$-comodule $Y$.  This functor has a behaviour similar to the behaviour of the $\text{Hom}$ functor for algebras. 
If now $X$ is a $(\Gamma, D)$-bicomodule and it is also quasi-finite, then $h_D(X, -)$ has the structure of a right $\Gamma$-comodule and the following proposition holds:

\begin{proposition} [{\cite{Morita for comodules}, 1.10}] For a $(\Gamma, D)$- bicomodule $X$ the following are equivalent:
\begin{enumerate}
\item $X$ is quasi-finite.
\item The functor $\mathcal{M}^{\Gamma} \rightarrow \mathcal{M}^D$, $Z \mapsto Z \underset{\Gamma}{\Box} X$ has a left adjoint.
\end{enumerate} 
In this case the left adjoint is the Cohom functor $h_D$.

\begin{remark} The set $\text{Coend}_D(M) = h_D(M, M)$ has the structure of a
coalgebra. Thus $M$ becomes a $(\text{Coend}_D(M), D)$-bicomodule (see also \cite{Morita for comodules}) . 

\end{remark}

\end{proposition}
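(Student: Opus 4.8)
The proposal is to prove the two implications separately; the final assertion that the left adjoint, when it exists, is $h_D(X,-)$ then follows from uniqueness of adjoints. Throughout I write $\lambda_X:X\to\Gamma\otimes X$ and $\rho_X:X\to X\otimes D$ for the two coactions of the bicomodule $X$ (mutually colinear), and $|Z|$ for the underlying vector space of a $\Gamma$-comodule $Z$.

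\emph{Proof of $(2)\Rightarrow(1)$.} I would pre-compose $-\,\Box_\Gamma X$ with the cofree comodule functor. Let $\mathcal C:\mathrm{Vect}\to\mathcal M^\Gamma$, $W\mapsto W\otimes\Gamma$ (with coaction $\mathrm{id}_W\otimes\Delta_\Gamma$); it is right adjoint to the forgetful functor $U:\mathcal M^\Gamma\to\mathrm{Vect}$, so in particular $\mathcal C$ admits the left adjoint $U$. The counit isomorphism of the cotensor product gives $\Gamma\,\Box_\Gamma X\cong X$ (with inverse $\lambda_X$), and pulling the vector-space factor out of the defining equalizer yields a natural isomorphism $(W\otimes\Gamma)\,\Box_\Gamma X\cong W\otimes(\Gamma\,\Box_\Gamma X)\cong W\otimes X$, i.e. $(-\,\Box_\Gamma X)\circ\mathcal C\cong(-\otimes X):\mathrm{Vect}\to\mathcal M^D$. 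If $-\,\Box_\Gamma X$ has a left adjoint, so does the composite $(-\,\Box_\Gamma X)\circ\mathcal C$ (a composite of functors each admitting a left adjoint), hence so does $-\otimes X$; by the criterion recalled above (\cite{Morita for comodules}, 1.3), $X$ is quasi-finite.

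\emph{Proof of $(1)\Rightarrow(2)$ and identification of the adjoint.} Assume $X$ quasi-finite, so $h_D(X,-):\mathcal M^D\to\mathrm{Vect}$ exists with a natural bijection $\mathrm{Hom}_{\mathrm{Vect}}(h_D(X,Y),W)\cong\mathrm{Hom}_{\mathcal M^D}(Y,W\otimes X)$; let $\theta_Y:Y\to h_D(X,Y)\otimes X$ be its unit. As recalled before the statement, the left $\Gamma$-coaction on $X$ promotes $h_D(X,Y)$ to a right $\Gamma$-comodule: explicitly, the $D$-colinear composite $Y\xrightarrow{\theta_Y}h_D(X,Y)\otimes X\xrightarrow{\mathrm{id}\otimes\lambda_X}\bigl(h_D(X,Y)\otimes\Gamma\bigr)\otimes X$ corresponds under the Cohom adjunction to the coaction $\rho_Y:h_D(X,Y)\to h_D(X,Y)\otimes\Gamma$, and coassociativity, counitality and naturality in $Y$ all reduce to the corresponding properties of $\lambda_X$ by a diagram chase using the uniqueness clause in the universal property of $h_D$ together with naturality of the Cohom adjunction in $W$. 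It then remains to produce, naturally in $Y\in\mathcal M^D$ and $Z\in\mathcal M^\Gamma$, a bijection $\mathrm{Hom}_{\mathcal M^\Gamma}(h_D(X,Y),Z)\cong\mathrm{Hom}_{\mathcal M^D}(Y,Z\,\Box_\Gamma X)$. I would obtain it by restricting the underlying bijection $\mathrm{Hom}_{\mathrm{Vect}}(h_D(X,Y),|Z|)\cong\mathrm{Hom}_{\mathcal M^D}(Y,|Z|\otimes X)$: one checks that a linear map $f:h_D(X,Y)\to|Z|$ is right $\Gamma$-colinear (for $\rho_Y$ on the source and $\rho_Z$ on the target) if and only if its mate $g:Y\to|Z|\otimes X$ satisfies $(\rho_Z\otimes\mathrm{id}_X)\circ g=(\mathrm{id}_{|Z|}\otimes\lambda_X)\circ g$, equivalently $g$ factors through $Z\,\Box_\Gamma X=\ker(\rho_Z\otimes\mathrm{id}_X-\mathrm{id}_{|Z|}\otimes\lambda_X)\hookrightarrow|Z|\otimes X$; both conditions unwind, via the definition of $\rho_Y$ and naturality of the Cohom adjunction in the vector-space variable, to the same identity. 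This exhibits $h_D(X,-)$ as a left adjoint of $-\,\Box_\Gamma X$, and since left adjoints are unique up to natural isomorphism it also proves the final sentence of the statement. (Taking $Z=\Gamma$ recovers the original Cohom adjunction via $\Gamma\,\Box_\Gamma X\cong X$; taking $Y=X=M$ and $\Gamma=\mathrm{Coend}_D(M)$ is the setting of the subsequent remark, the coalgebra structure on $h_D(M,M)$ arising from the unit and counit of this adjunction.)

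\emph{Main obstacle.} Everything is formal except the last verification: matching ``$f$ is $\Gamma$-colinear'' with ``the mate $g$ lands in the cotensor product'' requires carefully chasing the unit and counit of the Cohom adjunction and its naturality squares. That identification is the technical heart of the proposition; the rest is bookkeeping with adjoints and cofree comodules.
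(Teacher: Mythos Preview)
The paper does not contain its own proof of this proposition: it is quoted verbatim as background material from Takeuchi's \emph{Morita theorems for categories of comodules} (the citation \cite{Morita for comodules}, 1.10), with no argument supplied. So there is nothing in the present paper to compare your proposal against.

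That said, your argument is correct and is essentially the standard one. The $(2)\Rightarrow(1)$ direction via precomposition with the cofree functor $W\mapsto W\otimes\Gamma$ and the identification $(W\otimes\Gamma)\,\Box_\Gamma X\cong W\otimes X$ is exactly how Takeuchi reduces to the vector-space criterion 1.3. For $(1)\Rightarrow(2)$, promoting $h_D(X,Y)$ to a right $\Gamma$-comodule via $(\mathrm{id}\otimes\lambda_X)\circ\theta_Y$ and then checking that colinearity of $f$ matches factoring of its mate through the cotensor equalizer is again the route taken in the source; you have correctly isolated the one nontrivial step (the equivalence of the two conditions under the Cohom adjunction), and the naturality-square chase you sketch does go through. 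Your parenthetical remarks recovering the base adjunction at $Z=\Gamma$ and locating the coalgebra structure on $\mathrm{Coend}_D(M)$ in the unit/counit are also accurate.
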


The following proposition explains how ``nice'' functors between categories of comodules look and will be used heavily in the sequel. 

\begin{proposition} [{\cite{Morita for comodules}, 2.1}] Let $F: \mathcal{M}^{\Gamma} \rightarrow \mathcal{M}^D$ be a $k$-linear functor. If $F$ is left exact and preserves direct sums, then there exists a $(\Gamma, D)$-bicomodule $P$ such that $F(Z) = Z \underset{\Gamma}{\Box} P$ as a functor of $Z \, \in \mathcal{M}^{\Gamma}$.

\end{proposition}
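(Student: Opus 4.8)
The plan is to run the standard Eilenberg--Watts argument for comodule categories: produce the bicomodule as $P := F(\Gamma)$ and reconstruct $F$ from a functorial ``co-presentation'' of the objects of $\mathcal{M}^{\Gamma}$. For a vector space $V$ write $\mathrm{cof}(V) := V \otimes \Gamma$ for the cofree right $\Gamma$-comodule on $V$ (coaction $\mathrm{id}_V \otimes \Delta_{\Gamma}$), and recall that $\Gamma$ is a $(\Gamma,\Gamma)$-bicomodule with both coactions equal to $\Delta_{\Gamma}$, so in particular $\Delta_{\Gamma}$ is a morphism $\Gamma \to \mathrm{cof}(\Gamma)$ in $\mathcal{M}^{\Gamma}$.

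First I would record that for every $Z \in \mathcal{M}^{\Gamma}$ the coaction $\rho_Z$ identifies $Z$ with $Z \underset{\Gamma}{\Box}\Gamma$, i.e.\ with the kernel of $\rho_Z \otimes \mathrm{id}_{\Gamma} - \mathrm{id}_Z \otimes \Delta_{\Gamma}$; coassociativity and counitality of $\rho_Z$ then give an exact sequence in $\mathcal{M}^{\Gamma}$, natural in $Z$ (underlying vector spaces understood inside $\mathrm{cof}$),
$$ 0 \longrightarrow Z \xrightarrow{\ \rho_Z\ } \mathrm{cof}(Z) \xrightarrow{\ \rho_Z \otimes \mathrm{id}_{\Gamma}\ -\ \mathrm{id}_Z \otimes \Delta_{\Gamma}\ } \mathrm{cof}(Z \otimes \Gamma). $$
Since $F$ is left exact it preserves this kernel, so $F(Z)$ is the kernel of $F\big(\rho_Z \otimes \mathrm{id}_{\Gamma} - \mathrm{id}_Z \otimes \Delta_{\Gamma}\big)$.

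Next I would compute $F$ on cofree comodules and on the two maps above. Choosing a basis exhibits $\mathrm{cof}(V)$ as a direct sum of copies of $\Gamma$ indexed by that basis; since $F$ preserves direct sums, there is a natural isomorphism $F(\mathrm{cof}(V)) \cong V \otimes P$ with $P := F(\Gamma)$, and $F(\mathrm{cof}(f)) = f \otimes \mathrm{id}_P$ for a linear map $f$. In particular $F(\rho_Z \otimes \mathrm{id}_{\Gamma}) = \rho_Z \otimes \mathrm{id}_P$. For the other map, after choosing a basis of $Z$ the map $\mathrm{id}_Z \otimes \Delta_{\Gamma}$ is the direct sum over that basis of the single morphism $\Delta_{\Gamma} : \Gamma \to \mathrm{cof}(\Gamma)$, whence $F(\mathrm{id}_Z \otimes \Delta_{\Gamma}) = \mathrm{id}_Z \otimes \rho^{\ell}_P$, where $\rho^{\ell}_P := F(\Delta_{\Gamma}) : P \to \Gamma \otimes P$. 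Therefore $F(Z)$ is the kernel of $\rho_Z \otimes \mathrm{id}_P - \mathrm{id}_Z \otimes \rho^{\ell}_P$, which is by definition $Z \underset{\Gamma}{\Box} P$, and the identification is natural in $Z$ because every step was.

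It then remains to check that $P = F(\Gamma)$, with its right $D$-coaction (it is an object of $\mathcal{M}^D$) and with $\rho^{\ell}_P$, is a $(\Gamma,D)$-bicomodule: coassociativity and counitality of $\rho^{\ell}_P$ and compatibility with the $D$-coaction. Each follows by applying $F$ to the corresponding commuting diagram for $\Gamma$ (coassociativity and counitality of $\Delta_{\Gamma}$, and the fact that $\Delta_{\Gamma}:\Gamma\to\mathrm{cof}(\Gamma)$ is $\mathcal{M}^{\Gamma}$-linear so $F(\Delta_{\Gamma})$ is $\mathcal{M}^D$-linear) together with the identifications $F(\mathrm{cof}(V))\cong V\otimes P$. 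The main obstacle is precisely this bookkeeping: verifying that the isomorphisms $F(\mathrm{cof}(V))\cong V\otimes P$ are natural and mutually compatible (which is where one needs $F$ to preserve \emph{arbitrary} direct sums, since $Z$ and $\Gamma$ may be infinite-dimensional) and that $F(\mathrm{id}_Z\otimes\Delta_{\Gamma})$ really matches $\mathrm{id}_Z\otimes\rho^{\ell}_P$ under these identifications; once these are pinned down the rest is formal.
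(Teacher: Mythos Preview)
The paper does not give its own proof of this proposition: it is quoted as background material from Takeuchi's paper (\cite{Morita for comodules}, Proposition 2.1) and used as a black box. So there is nothing in the present paper to compare your argument against.

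That said, your proof is correct and is exactly the standard Eilenberg--Watts argument for comodule categories, which is essentially how Takeuchi proves it in the cited reference: take $P = F(\Gamma)$, use the equalizer presentation $0 \to Z \to Z \otimes \Gamma \rightrightarrows Z \otimes \Gamma \otimes \Gamma$ in $\mathcal{M}^{\Gamma}$, apply the left exact, direct-sum-preserving $F$, and read off $F(Z) \cong Z \underset{\Gamma}{\Box} P$. Your identification $F(\mathrm{cof}(V)) \cong V \otimes P$ via bases is the right mechanism, and you correctly flag the one genuine point that needs care: that this identification is natural in $V$ (for arbitrary linear maps, not just basis permutations) and intertwines $F(\mathrm{id}_Z \otimes \Delta_\Gamma)$ with $\mathrm{id}_Z \otimes \rho^\ell_P$. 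Once that is checked the bicomodule axioms for $P$ follow formally, as you say.
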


We are now in a position to state the Morita equivalence theorem for categories of comodules.

\begin{Def} A set of \textbf{pre-equivalence data} $(\Gamma, D, _{\Gamma}{P}_D , _D{Q}_{\Gamma}, f, g)$ consists of coalgebras $\Gamma$ and $D$, bicomodules $_{\Gamma}{P}_D$ and $_D{Q}_{\Gamma}$ and bicolinear maps $f: \Gamma \rightarrow P \underset{D}{\Box} Q$ and $g: D \rightarrow Q \underset{\Gamma}{\Box} P$ making the following diagrams commute: 

$$
\text{ \xymatrixcolsep{5pc}
\xymatrix{ {P} \ar[r]^{\simeq} \ar[d]_{\simeq} &{P \underset{D}{\Box} D} \ar[d]^{id \Box g} &{} &{Q} \ar[r]^{\simeq} \ar[d]_{\simeq} &{Q \underset{\Gamma}{\Box} \Gamma} \ar[d]_{id \Box f}\\
{\Gamma \underset{\Gamma}{\Box} P} \ar[r]_{f \Box id} &{P \underset{D}{\Box} Q \underset{\Gamma}{\Box} P} &{} &{D \underset{D}{\Box}Q} \ar[r]_{g \Box id} &{Q \underset{\Gamma}{\Box} P \underset{D}{\Box} Q}  }}$$

\end{Def}

If $f$ and $g$ are isomorphisms then $\Gamma$ and $D$ are said to be Morita-Takeuchi equivalent and their categories of comodules are equivalent. The functors of the equivalence are given by $ - \underset{\Gamma}{\Box} P : \mathcal{M}^{\Gamma} \rightarrow \mathcal{M}^D$ and $- \underset{D}{\Box} Q : \mathcal{M}^D \rightarrow \mathcal{M}^{\Gamma}$. \newline

\subsection{Main theorem}

We are now in the position to prove the second main theorem of the categorical characterization. 
We start with the following lemma. This result can be found in \cite{DM} (after Example 2.15) where a dual version of this theorem is proved. 

\begin{lemma}\label{induced coalgebra map} Let $H, B$ be coalgebras and $\mathcal{M}^H, \mathcal{M}^B$ the corresponding categories of comodules. Assume that there exists a functor $ \Psi : \mathcal{M}^H \rightarrow \mathcal{M}^B$ which carries the forgetful functor to the forgetful functor. Then $\Psi $  is induced by a unique map of coalgebras $\psi: H \rightarrow B$.
\end{lemma}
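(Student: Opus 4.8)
The plan is to reconstruct the coalgebra map $\psi\colon H\to B$ directly from the functor $\Psi$ applied to the regular comodule, and then check both that it is a coalgebra map and that it induces $\Psi$. First I would apply $\Psi$ to $H$ itself, viewed as a right $H$-comodule via $\Delta_H$. By hypothesis the underlying vector space of $\Psi(H)$ is the underlying vector space of $H$ (since $\Psi$ carries the forgetful functor to the forgetful functor), so $\Psi(H)$ is a right $B$-comodule structure $\rho\colon H\to H\otimes B$ on the vector space $H$. I would then define $\psi\colon H\to B$ by $\psi:=(\epsilon_H\otimes\mathrm{id}_B)\circ\rho$, i.e. $\psi(h)=\epsilon_H(h_{(0)})h_{(1)}$ in the notation $\rho(h)=h_{(0)}\otimes h_{(1)}$.

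Next I would verify that $\psi$ is a morphism of coalgebras, i.e. $\Delta_B\circ\psi=(\psi\otimes\psi)\circ\Delta_H$ and $\epsilon_B\circ\psi=\epsilon_H$. The key input is that for a right $H$-comodule $N$, the coaction map $N\to N\otimes H$ is itself a morphism of $H$-comodules (where $N\otimes H$ carries the codiagonal-in-the-$H$-slot structure); applying $\Psi$, which is a functor, the $B$-coaction on $\Psi(N)$ must be $B$-colinear with respect to the transported structures. Concretely, $\Delta_H\colon H\to H\otimes H$ is right $H$-colinear, so $\Psi$ sends it to a right $B$-colinear map $H\to H\otimes H$ between the appropriate $B$-comodules; chasing this through, together with the counit axiom for $\rho$, should force the two coalgebra-map identities for $\psi$. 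I would also exploit that $\rho$ must be compatible with $\Delta_H$ itself as a morphism of right $H$-comodules $H\to H\otimes H$ in order to pin down that $\rho(h)=h_{(1)}\otimes\psi(h_{(2)})$; this cofreeness-type identity is what makes everything work.

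Then I would show $\Psi$ is the corestriction-of-scalars functor $\psi_\ast$ along $\psi$, i.e. for any $N\in\mathcal{M}^H$ the $B$-comodule $\Psi(N)$ has coaction $(\mathrm{id}_N\otimes\psi)\circ\rho_N^H$. For this, given $n\in N$, the element $n$ together with $\rho_N^H(n)\in N\otimes H$ determines an $H$-colinear map $H\to N$ (again by cofreeness of $H$ as a cogenerator), namely $h\mapsto n_{(0)}\epsilon_H(\cdots)$—more precisely one uses the colinear map classifying $n$; functoriality of $\Psi$ and naturality then transport the formula $\rho(h)=h_{(1)}\otimes\psi(h_{(2)})$ from $H$ to $N$, giving $\rho_N^B(n)=n_{(0)}\otimes\psi(n_{(1)})$. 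Finally, uniqueness of $\psi$: if $\psi'$ also induces $\Psi$, then applying the induced coactions to $H$ and composing with $\epsilon_H\otimes\mathrm{id}$ recovers $\psi'=(\epsilon_H\otimes\mathrm{id})\circ\rho=\psi$.

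The main obstacle I anticipate is the bookkeeping in the second and third steps: making precise the "classifying morphism" $H\to N$ attached to an element $n\in N$ and its coaction, and checking that the various diagrams of $H$-comodule morphisms to which we apply $\Psi$ really do force $\rho(h)=h_{(1)}\otimes\psi(h_{(2)})$ rather than merely some $B$-coaction on the vector space $H$. Once that cofreeness identity is established on the regular comodule $H$, propagating it to arbitrary $N$ and deducing the coalgebra axioms for $\psi$ are routine Sweedler-notation calculations. A dual version appears in \cite{DM} after Example 2.15, so I would follow that argument mutatis mutandis.
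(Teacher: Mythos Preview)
Your proposal is correct and is exactly the approach the paper indicates: the paper gives no proof of its own for this lemma, merely citing \cite{DM} (after Example~2.15) for the dual statement, and your outline is the standard dualization of that argument. The one point you flag as delicate---establishing $\rho(h)=h_{(1)}\otimes\psi(h_{(2)})$ on the regular comodule and then propagating to arbitrary $N$---is handled just as you suggest, using that the $H$-colinear endomorphisms $h\mapsto\phi(h_{(1)})h_{(2)}$ of $H$ (for $\phi\in H^*$) must be sent by $\Psi$ to $B$-colinear maps, and that every comodule embeds colinearly into a direct sum of copies of $H$.
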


Now by \cite{Morita for comodules} we know that in a category of comodules $\mathcal{M}^H$, a comodule $Y$ is called quasi-finite if the functor $ \text{Vect} \rightarrow \mathcal{M}^H$ given by $V \mapsto V \otimes Y$ has a left adjoint denoted by $h_Y$ . Therefore $H$ is itself quasi-finite in $\mathcal{M}^H$ and in this case $h_H$ is the forgetful functor. Moreover, it is also proved in \cite{Morita for comodules} (Proposition 1.10), that when $H$ is a $(B, H)$- bicomodule for a coalgebra $B$ then $h_H$ factors through the category $\mathcal{M}^B$ and the factorization map is left adjoint to the cotensor functor $- \underset{B}{\Box} H : \mathcal{M}^B \rightarrow \mathcal{M}^H$. 

This leads to the following:

\begin{corollary}\label{Psi is adjoint to cotensor} Let $H, B$ be coalgebras and assume that there exists a functor  $\Psi : \mathcal{M}^H \rightarrow \mathcal{M}^B$ carrying the forgetful functor to the forgetful functor. Then $\Psi$ is left adjoint to the cotensor functor $- \underset{B}{\Box} H : \mathcal{M}^B \rightarrow \mathcal{M}^H$. 
\end{corollary}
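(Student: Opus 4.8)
\textbf{Proof plan for Corollary \ref{Psi is adjoint to cotensor}.}

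The plan is to combine Lemma \ref{induced coalgebra map} with the results recalled from \cite{Morita for comodules} (Proposition 1.10) in a two-step argument. First I would invoke Lemma \ref{induced coalgebra map}: since $\Psi : \mathcal{M}^H \to \mathcal{M}^B$ carries the forgetful functor to the forgetful functor, it is induced by a unique coalgebra map $\psi : H \to B$. Concretely this means that for an $H$-comodule $(N, \rho_N)$, the $B$-comodule $\Psi(N)$ has the same underlying vector space, with $B$-coaction $(\mathrm{id}_N \otimes \psi) \circ \rho_N$. In particular, applied to $H$ itself with its regular right $H$-coaction $\Delta_H$, this turns $H$ into a right $B$-comodule via $(\mathrm{id}_H \otimes \psi)\circ \Delta_H$; together with the left regular $H$-coaction $\Delta_H$ (which commutes with the right $B$-coaction by coassociativity), this exhibits $H$ as a $(B, H)$-bicomodule, which is exactly the data needed to apply the cited results.

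Next I would recall, as stated in the paragraph preceding the corollary, that $H$ is quasi-finite in $\mathcal{M}^H$ with $h_H$ equal to the forgetful functor $\mathcal{M}^H \to \mathrm{Vect}$, and that by \cite{Morita for comodules}, Proposition 1.10, when $H$ carries a $(B,H)$-bicomodule structure the cohom functor $h_H$ factors through $\mathcal{M}^B$, with the factorization being left adjoint to $- \underset{B}{\Box} H : \mathcal{M}^B \to \mathcal{M}^H$. The remaining point is then purely one of identification: I must check that the factorization of the forgetful functor through $\mathcal{M}^B$ provided by the general theory agrees with the functor $\Psi$ we started with. Both are lifts of the forgetful functor $\mathcal{M}^H \to \mathrm{Vect}$ along the forgetful functor $\mathcal{M}^B \to \mathrm{Vect}$, so it suffices to observe that such a lift is unique once we know its value on $H$ (or, more simply, to note that the $(B,H)$-bicomodule structure used in Proposition 1.10 is precisely the one induced by $\psi$, so that the factorization map it produces is, on objects, $N \mapsto (N, (\mathrm{id}\otimes\psi)\circ\rho_N) = \Psi(N)$, and likewise on morphisms). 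Once this identification is made, the adjunction $\Psi \dashv (- \underset{B}{\Box} H)$ is immediate from Proposition 1.10.

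The main obstacle is this last identification step: making sure that the bicomodule structure on $H$ implicitly used in the statement of Proposition 1.10 is the one coming from $\psi$, and hence that the abstract factorization of $h_H = (\text{forgetful})$ really coincides with the given $\Psi$ rather than with some other lift. This is not deep — it hinges on the uniqueness clause in Lemma \ref{induced coalgebra map} and on the explicit description of cohom in \cite{Morita for comodules} — but it is the place where one has to be careful, since all the functors involved look alike at the level of underlying vector spaces and the content lies entirely in the comodule structures. Everything else is a direct citation of results recorded earlier in the excerpt.
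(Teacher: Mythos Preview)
Your argument is essentially correct and follows the route the paper sets up in the paragraph immediately preceding the corollary: invoke Lemma~\ref{induced coalgebra map} to obtain $\psi$, use it to make $H$ a $(B,H)$-bicomodule, and then apply the cohom machinery of \cite{Morita for comodules}, Proposition~1.10. The paper's own proof takes a slightly more direct shortcut: after extracting $\psi$ it simply observes that the induced functor $\psi^*$ (defined by $(M,\rho_M)\mapsto (M,(\mathrm{id}\otimes\psi)\circ\rho_M)$) coincides with $\Psi$, and then cites \cite{BW}~(22.12) for the fact that $\psi^*$ is left adjoint to $-\,\underset{B}{\Box}\,H$. So both arguments reduce to ``identify $\Psi$ with the corestriction functor, then quote a standard adjunction''; you route the quotation through Takeuchi's cohom, the paper through Brzezi\'nski--Wisbauer.

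One small slip to fix: for $-\,\underset{B}{\Box}\,H:\mathcal{M}^B\to\mathcal{M}^H$ to make sense with the paper's conventions (right $B$-comodules cotensored over $B$ with a left $B$-comodule), $H$ must carry a \emph{left} $B$-coaction $(\psi\otimes\mathrm{id})\circ\Delta_H$ and the \emph{right} regular $H$-coaction, not the right $B$/left $H$ structures you wrote down. This is exactly the bicomodule structure used later in the paper (cf.\ the definition of ${}^{\mathrm{co}B}H$ in the proof of Theorem~\ref{theorem 2}). Once the sides are corrected, your identification step goes through without change.
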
  

\begin{proof}
 Indeed,  as we saw in \ref{induced coalgebra map} $\Psi$ is induced by a unique coalgebra map $\psi : H \rightarrow B$.

On the other hand we notice also the following: Whenever we have a map of coalgebras $\psi :H \rightarrow B$, this induces a functor $\psi^* : \mathcal{M}^H \rightarrow \mathcal{M}^B$ as follows:
$$ \text{\xymatrixcolsep{5pc} \xymatrix{ {M} \ar[r]^{\rho_M} & {M \otimes H} \ar[r]^{id \otimes \psi} &{M \otimes B}  \\ {} & {} & {} }}$$
This functor $\psi^*$ obviously commutes with the two forgetful functors to Vect. Therefore we can identify $\Psi$ with $\psi^*$. 
Moreover,  $\psi^*$ is known to be left adjoint to the cotensor functor $- \underset{B}{\Box} H : \mathcal{M}^B \rightarrow \mathcal{M}^H$ (a proof of this appears in \cite{BW} (22.12)), and therefore $\Psi$ is also left adjoint to the cotensor functor.

\end{proof}

\begin{lemma}\label{fc yields surjection}
Assume that the coalgebra map $\psi: H \rightarrow B$ makes $H$ left faithfully coflat over $B$. Then $\psi$ is a surjective map. 

\end{lemma}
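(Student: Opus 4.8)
The plan is to show that the image $\psi(H) \subseteq B$ is a subcoalgebra $B'$ with the property that $B/B'$ is "annihilated" after applying $-\Box_B H$, and then use faithful coflatness to conclude $B/B' = 0$. First I would record the basic structural fact: since $\psi\colon H\to B$ is a coalgebra map, $B' := \psi(H)$ is a subcoalgebra of $B$, and $H$ becomes a right $B'$-comodule via $\psi$ (the coaction $\rho_H = (\mathrm{id}\otimes\psi)\Delta_H$ actually lands in $H\otimes B'$). In particular, as a $B$-comodule, $H$ is ``supported on $B'$'': for the right $B$-comodule structure on $B/B'$ (the quotient coalgebra, or at least quotient coideal), the cotensor $H\,\Box_B (B/B')$ should vanish, because the coaction of $H$ factors through $B'$ and the two maps $\rho_H\otimes\mathrm{id}$ and $\mathrm{id}\otimes\rho_{B/B'}$ defining the cotensor kernel agree on all of $H\otimes(B/B')$ only on the part coming from $B'$, which maps to zero in $B/B'$.

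More carefully, the key step is: consider the short exact sequence of left $B$-comodules $0 \to B' \to B \to B/B' \to 0$ (using that $B'$ is a subcoalgebra, hence a left $B$-subcomodule, and the quotient is a left $B$-comodule). Apply the functor $H\,\Box_B -\colon {}^B\mathcal{M}\to {}_k\mathcal{M}$. By hypothesis $H$ is left faithfully coflat over $B$, so this functor is exact and, crucially, \emph{reflects} exactness — in particular it reflects the zero object: if $H\,\Box_B W = 0$ then $W = 0$. So it suffices to show $H\,\Box_B (B/B') = 0$. For this I would compute: $H\,\Box_B B \cong H$ (counit identifies it), and $H\,\Box_B B' \cong H$ as well, since the right $B$-coaction on $H$ factors through $B'$ — concretely, the inclusion $H\,\Box_B B' \hookrightarrow H\,\Box_B B$ is an isomorphism because every element of $H$ in the image of $\rho_H$ already lies in $H\otimes B'$. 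Then exactness of $H\,\Box_B -$ applied to the short exact sequence gives $0 \to H \to H \to H\,\Box_B(B/B') \to 0$ with the first map the identity, forcing $H\,\Box_B(B/B') = 0$, hence $B/B' = 0$, i.e. $\psi$ is surjective.

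The main obstacle I anticipate is the bookkeeping around which side the comodule structures live on and making sure the short exact sequence $0\to B'\to B\to B/B'\to 0$ is genuinely a sequence in the relevant comodule category (left $B$-comodules) so that faithful coflatness applies — one must check $B'=\psi(H)$ is a two-sided coideal, or at least a left coideal, and that $H\,\Box_B B' \to H\,\Box_B B$ is an isomorphism, which is where the hypothesis ``$\psi$ is a coalgebra map'' (so the $H$-coaction really does factor through $B'$) gets used. Everything else is a routine unwinding of the definition of the cotensor product as the kernel of $\rho_H\otimes\mathrm{id} - \mathrm{id}\otimes\rho_W$ together with the fact that faithfully coflat functors reflect the zero object (a consequence of reflecting short exact sequences, applied to $0\to 0 \to W \to W \to 0$).
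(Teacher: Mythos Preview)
Your argument is correct and rests on the same idea as the paper's, though the packaging differs. The paper applies the faithfully coflat functor $-\,\Box_B H$ directly to $\psi\colon H\to B$ (as a map of right $B$-comodules) and observes that $\psi\,\Box\,\mathrm{id}\colon H\,\Box_B H \to B\,\Box_B H\cong H$ is split by $\Delta_H$, hence surjective; reflecting epimorphisms then gives surjectivity of $\psi$. You instead pass to the cokernel $B/B'$ with $B'=\psi(H)$ and show it cotensors to zero by arguing that $B'\,\Box_B H\hookrightarrow B\,\Box_B H$ is already an isomorphism. Both routes pivot on the same observation---the $B$-coaction on $H$ factors through $\psi(H)$---and both invoke faithful coflatness to reflect a property (epimorphism versus zero object). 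The paper's version is a touch more economical, avoiding the need to form $B/B'$ and verify its comodule structure; yours makes the vanishing of the obstruction more explicit. One bookkeeping point you correctly anticipated: since the hypothesis is that $H$ is a \emph{left} $B$-comodule, the faithfully coflat functor is $-\,\Box_B H$ acting on $\mathcal{M}^B$, so your short exact sequence $0\to B'\to B\to B/B'\to 0$ should live in $\mathcal{M}^B$ rather than ${}^B\mathcal{M}$; with that adjustment everything goes through as you outlined.
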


\begin{proof}
We first notice that from \cite{NT} (Theorem 3.1) we know that epimorphisms in the category of coalgebras are surjective maps. 
 
The coalgebra map $\psi: H \rightarrow B$ yields the map $ \psi \Box id : H \underset{B}{\Box} H \rightarrow B \underset{B}{\Box} H \simeq H$ of $H$-comodules. By  \cite{BW}, 11.8 we know that the image of $\Delta_H$ is contained in $H \underset{B}{\Box} H$ and that the composition of the maps 
$$ \text{\xymatrixcolsep{5pc} \xymatrix{ {H} \ar[r]^{\Delta_H} & {H \underset{B}{\Box} H} \ar[r]^{\psi \Box id} &{B \underset{B}{\Box} H \simeq H}  \\ {} & {} & {} }}$$
yields the identity. Therefore, $\psi \Box id $ is a surjective map. Since the cotensor functor is exact and faithful it reflects epimorphisms, and this means that $\psi$ is surjective.

\end{proof}

It remains to see under which conditions the map $\psi$ is also left $H$-linear.

\begin{lemma} \label{tensor product of abelian cats}
Let $\mathcal{M}^B$ be a module category over $\mathcal{M}^H$. Suppose that the action of $\mathcal{M}^H$ on $\mathcal{M}^B$ commutes with the two forgetful functors to Vect, i.e. suppose that $ X \otimes M$ is mapped to a $B$-comodule whose underlying vector space is equal to $X \otimes M$. Then the functor $\otimes : \mathcal{M}^H \times \mathcal{M}^B \rightarrow \mathcal{M}^B$ defining the module category $\mathcal{M}^B$ comes from a coalgebra map $\sigma : H \otimes B \rightarrow B$ which endows $B$ with the structure of a left $H$-module. 

\end{lemma}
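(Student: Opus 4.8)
The plan is to reconstruct the action bifunctor from a single coalgebra map, in the spirit of Lemma \ref{induced coalgebra map}, and then read the module axioms off the coherence constraints of the module category. First I would note that the action bifunctor $\otimes : \mathcal{M}^H \times \mathcal{M}^B \to \mathcal{M}^B$ is additive, exact and cocontinuous in each variable (cocontinuity being automatic for comodule categories, since every comodule is the filtered colimit of its finite-dimensional subcomodules and the functor is exact). Hence it factors, up to isomorphism, through the tensor product of abelian $k$-categories, producing a functor $G : \mathcal{M}^H \otimes_k \mathcal{M}^B \to \mathcal{M}^B$. Using the standard identification $\mathcal{M}^H \otimes_k \mathcal{M}^B \simeq \mathcal{M}^{H \otimes_k B}$ — under which the external product $X \boxtimes M$ becomes the vector space $X \otimes M$ with $(H\otimes B)$-coaction $x \otimes m \mapsto x_{(0)} \otimes m_{(0)} \otimes (x_{(1)} \otimes m_{(1)})$ — we get $G : \mathcal{M}^{H\otimes B} \to \mathcal{M}^B$.

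Next I would check that $G$ carries the forgetful functor to the forgetful functor on all objects, not merely on external products. On an object $X\boxtimes M$ this is exactly the hypothesis that $X\otimes M$ has underlying vector space $X\otimes M$; moreover the cofree $(H\otimes B)$-comodule on a vector space $V$ is precisely $(V\otimes H)\boxtimes B$ with $V\otimes H$ cofree over $H$ and $B$ regular, so $G$ commutes with the forgetful functors on cofree comodules. Since every $(H\otimes B)$-comodule $N$ is the kernel of a map $W_0\otimes(H\otimes B)\to W_1\otimes(H\otimes B)$ of cofree comodules, and since $G$, together with both forgetful functors, is left exact, the agreement propagates to all of $\mathcal{M}^{H\otimes B}$. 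Lemma \ref{induced coalgebra map} then yields a unique coalgebra map $\sigma : H\otimes B \to B$ inducing $G$. Unwinding the identifications, the $B$-comodule structure that the action puts on $X\otimes M$ is $x\otimes m \mapsto x_{(0)}\otimes m_{(0)} \otimes \sigma(x_{(1)}\otimes m_{(1)})$, which is the assertion that the bifunctor ``comes from'' $\sigma$.

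It remains to extract the left $H$-module axioms for $\sigma$. As in the proof of Theorem \ref{theorem 1} I may assume $\mathcal{C}=\mathcal{M}^H$ strict and $\mathcal{M}^B$ a strict module category, so that the constraints $l_M$ and $m_{X,Y,M}$ are identities, compatibly with the forgetful functors. Requiring $l_M=\mathrm{id}:\mathbf{1}\otimes M\to M$ to be $B$-colinear, with $\mathbf{1}=k$ carrying the trivial $H$-coaction $1\mapsto 1\otimes 1_H$, forces $\sigma(1_H\otimes b)=b$ for all $b\in B$ (test on $M=B$ with its regular coaction and apply $\epsilon_B$). Requiring $m_{X,Y,M}=\mathrm{id}:(X\otimes Y)\otimes M\to X\otimes(Y\otimes M)$ to be $B$-colinear, and recalling that $X\otimes Y$ carries the $H$-coaction $x\otimes y\mapsto x_{(0)}\otimes y_{(0)}\otimes x_{(1)}y_{(1)}$, forces $\sigma(x_{(1)}y_{(1)}\otimes m_{(1)})=\sigma(x_{(1)}\otimes\sigma(y_{(1)}\otimes m_{(1)}))$, whence $\sigma(hh'\otimes b)=\sigma(h\otimes\sigma(h'\otimes b))$ upon testing on $X=Y=H$, $M=B$ and applying counits. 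Together these say exactly that $\sigma$ is a unital, associative left $H$-action on $B$.

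I expect the main obstacle to be the reconstruction step of the second paragraph: making precise the factorization through the tensor product of abelian categories and the equivalence $\mathcal{M}^H\otimes_k\mathcal{M}^B\simeq\mathcal{M}^{H\otimes_k B}$, and in particular verifying that $G$ commutes with the forgetful functors on \emph{all} objects so that Lemma \ref{induced coalgebra map} applies. Once $\sigma$ is in hand, translating the module-category coherence diagrams into the module-algebra axioms is routine Sweedler-notation bookkeeping.
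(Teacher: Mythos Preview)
Your proposal is correct and follows essentially the same route as the paper: factor the action bifunctor through the Deligne tensor product $\mathcal{M}^H\boxtimes\mathcal{M}^B\simeq\mathcal{M}^{H\otimes B}$, invoke Lemma~\ref{induced coalgebra map} to extract the coalgebra map $\sigma$, and derive the $H$-module axioms from the module-category coherence constraints. The only difference is one of emphasis: the paper spells out the identification $\mathcal{M}^H\boxtimes\mathcal{M}^B\simeq\mathcal{M}^{H\otimes B}$ via colimits of finite-dimensional subcoalgebras and Deligne's result in the finite case, while you take that as standard and instead address the point the paper leaves implicit---that the induced functor on $\mathcal{M}^{H\otimes B}$ commutes with the forgetful functors on \emph{all} objects, not only on external products $X\boxtimes M$.
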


\begin{proof}
By its definition a module category $\mathcal{M}$ over a monoidal category $\mathcal{C}$ is a category together with an exact  bifunctor $ \otimes : \mathcal{C} \times \mathcal{M} \rightarrow \mathcal{M}$. In particular $\otimes: \mathcal{C} \times \mathcal{M} \rightarrow \mathcal{M}$ is a functor from the Deligne tensor product of abelian categories  $\mathcal{C} \boxtimes \mathcal{M}$ to $\mathcal{M}$. Recall here that the tensor product of abelian categories $\mathcal{A} \boxtimes \mathcal{B}$ is defined by the following requirement:
That for each abelian category $\mathcal{D}$, the category of right exact functors $\mathcal{A} \boxtimes \mathcal{B} \rightarrow \mathcal{D}$ is equivalent to the category of functors $\mathcal{A} \times \mathcal{B} \rightarrow \mathcal{D} $ that are right exact in each variable (see also \cite{Deligne}). 
Consider now the case where $\mathcal{M}^B$  is a module category over $\mathcal{M}^H$. We first notice that $\mathcal{M}^H = \varinjlim{\mathcal{M}^{H_i}}$ where $H = \varinjlim H_i$ as a coalgebra. Indeed, every object $V \in \mathcal{M}^H$ is the colimit of its finite-dimensional vector spaces, and each such finite-dimensional vector space is a comodule over a finite-dimensional subcoalgebra of $H$. The same is true for the coalgebra $B = \varinjlim B_j$ and its category of comodules $\mathcal{M}^B = \varinjlim{\mathcal{M}^{B_j}}$. Therefore it can be concluded that $ \mathcal{M}^H \boxtimes \mathcal{M}^B = \varinjlim{\mathcal{M}^{H_i}} \boxtimes \varinjlim{\mathcal{M}^{B_j}} = \varinjlim \varinjlim (\mathcal{M}^{H_i} \boxtimes \mathcal{M}^{B_j})$ (see also \cite{Deligne}, 5.1 for this property of the tensor product of abelian categories).
Now, it is known again by \cite{Deligne} that $\mathcal{M}^{H_i} \boxtimes \mathcal{M}^{B_j} \simeq \mathcal{M}^{H_i \otimes B_j}$ for finite-dimensional coalgebras. 
On the other hand, $\mathcal{M}^{H \otimes B} = \varinjlim \varinjlim \mathcal{M}^{H_i \otimes B_j}$ since every finite-dimensional subcoalgebra of $H \otimes B$ is of the form $H_i \otimes B_j$ for finite-dimensional subcoalgebras $H_i, B_j$ of $H$ and $B$ respectively. Therefore $ \mathcal{M}^H \boxtimes \mathcal{M}^B \simeq \mathcal{M}^{H \otimes B} $. This means that the exact bifunctor required from the definition of $\mathcal{M}^B$ as a module category over $\mathcal{M}^H$ is actually a functor from $\mathcal{M}^{H \otimes B} \rightarrow \mathcal{M}^B$. Moreover, this functor commutes with the forgetful functors to Vect (by assumption), and therefore, by Lemma \ref{induced coalgebra map} it comes from a coalgebra map $\sigma : H \otimes B \rightarrow B$. By the associativity axiom in the definition of a module category applied to the objects $H \otimes H \otimes B$, it moreover follows that this map $\sigma$ satisfies the properties of a module map, endowing $B$ with the structure of a left $H$-module. 
As a result of the above, the action of an object $X \in \mathcal{M}^H$ on an object $M \in \mathcal{M}^B$ can be interpreted as follows: The object in $\mathcal{M}^B$ obtained by this action is equal to $X \otimes M$ as a vector space, and the $B$-comodule structure is given by the following: \bigskip

\makebox[\textwidth][c]{
$$ \text{\xymatrixcolsep{5pc} \xymatrix{ {X \otimes M} \ar[r]^{\rho_H \otimes \rho_B } & {X \otimes H \otimes M \otimes B } \ar[r]^{id \otimes \text{flip} \otimes id} &{X \otimes M \otimes H \otimes B} \ar[r]^{id \otimes id \otimes \sigma} &{ X \otimes M \otimes B}  \\ {} & {} & {} &{}  }}$$}

\end{proof}

\begin{theorem} \label{theorem 2}
Let $\mathcal{C}$ be the category of $H$-comodules for a Hopf algebra $H$ with bijective antipode. Let $\mathcal{M}^B$ be a module category over $\mathcal{C}$ and assume that there exists a functor of module categories $\Psi : \mathcal{C} \rightarrow \mathcal{M}^B$ which commutes with the two forgetful functors and which has a right adjoint functor $\Omega$ that is exact and faithful. Then $\mathcal{M}^B \simeq \text{Mod}_{\mathcal{C}}(A)$ for a coideal subalgebra $A$ of $H$, and $H$ is faithfully flat over $A$.


\end{theorem}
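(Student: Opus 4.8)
The plan is to translate the categorical data into a surjective coalgebra map $\psi : H \to B$ that exhibits $B$ as a quotient left $H$-module coalgebra over which $H$ is faithfully coflat, and then to feed this into the Hopf-algebraic dictionary (Proposition \ref{1-1}, Corollary \ref{cor1-1}, Theorem \ref{MW} and Example \ref{They define same category}).

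First I would produce $\psi$ together with the coflatness of $H$ over $B$. Since $\Psi : \mathcal{C} = \mathcal{M}^H \to \mathcal{M}^B$ commutes with the two forgetful functors, Lemma \ref{induced coalgebra map} gives a unique coalgebra map $\psi : H \to B$ with $\Psi = \psi^{*}$, and Corollary \ref{Psi is adjoint to cotensor} identifies $\Psi$ with the left adjoint of the cotensor functor $-\,\Box_B H : \mathcal{M}^B \to \mathcal{M}^H$. Right adjoints being unique up to natural isomorphism, the given functor $\Omega$ satisfies $\Omega \cong -\,\Box_B H$; as $\Omega$ is exact and faithful and the forgetful functor $\mathcal{M}^H \to \mathrm{Vect}$ is exact and faithful, $-\,\Box_B H$ followed by the forgetful functor preserves and reflects exactness, i.e. $H$ is (left) faithfully coflat over $B$. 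Lemma \ref{fc yields surjection} then yields that $\psi$ is surjective, so $B \cong H/I$ with $I = \ker\psi$ a coideal of $H$.

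Next I would upgrade $B$ to a quotient left $H$-module coalgebra. Since $\Psi$ is a functor of module categories commuting with the forgetful functors, the $\mathcal{C}$-action on $\mathcal{M}^B$ likewise commutes with the forgetful functors to $\mathrm{Vect}$, so Lemma \ref{tensor product of abelian cats} supplies a coalgebra map $\sigma : H \otimes B \to B$ making $B$ a left $H$-module. Evaluating the module-functor isomorphism $\Psi(X) = \Psi(X \otimes \mathbf 1) \cong X \otimes \Psi(\mathbf 1)$ on underlying vector spaces — using $\Psi(\mathbf 1) = k_{g}$ with $g = \psi(1_H)$ the grouplike image of the unit, the coaction on $\psi^{*}(X)$ being $x \mapsto x_{(0)} \otimes \psi(x_{(1)})$, and the coaction on $X \otimes k_g$ being $x \mapsto x_{(0)} \otimes \sigma(x_{(1)} \otimes g)$ by the formula of Lemma \ref{tensor product of abelian cats} — forces $\psi(h) = \sigma(h \otimes g)$ for all $h \in H$; combined with the associativity of the action $\sigma$ this gives $\psi(hh') = \sigma(h \otimes \psi(h'))$, so $\psi$ is left $H$-linear. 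Hence $I = \ker\psi$ is a left ideal as well as a coideal, $\sigma$ is the induced action, and $B = H/I$ is a quotient left $H$-module coalgebra.

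Finally I would apply the dictionary. Put $A := {}^{\text{co}B}H$, a right coideal subalgebra of $H$ by Proposition \ref{1-1}. Since $H$ is (left) faithfully coflat over $B$, Corollary \ref{cor1-1} shows that $A$ corresponds to $B$ under the stated bijection, so $H_A = B$ and $H$ is (left) faithfully flat over $A$. Theorem \ref{MW} then gives an equivalence $\mathcal{M}^H_A \simeq \mathcal{M}^{H_A} = \mathcal{M}^B$, and $\mathcal{M}^H_A = \text{Mod}_{\mathcal{C}}(A)$ by Example \ref{They define same category}; combining these yields $\mathcal{M}^B \simeq \text{Mod}_{\mathcal{C}}(A)$ for the coideal subalgebra $A$, over which $H$ is faithfully flat, as claimed. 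I expect the upgrading step — verifying that the $\mathcal{C}$-action on $\mathcal{M}^B$ really does commute with the forgetful functors so that Lemma \ref{tensor product of abelian cats} applies, and then chasing the module-functor coherence isomorphisms carefully enough to conclude that $\ker\psi$ is a left ideal — to be the main obstacle; the left/right bookkeeping in the (co)flatness statements is harmless since the antipode of $H$ is bijective.
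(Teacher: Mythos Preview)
Your proposal is correct and largely parallels the paper's own argument: both obtain the coalgebra map $\psi$ via Lemma~\ref{induced coalgebra map}, identify $\Omega$ with $-\,\Box_B H$ via Corollary~\ref{Psi is adjoint to cotensor}, deduce faithful coflatness and surjectivity of $\psi$ via Lemma~\ref{fc yields surjection}, and then use Lemma~\ref{tensor product of abelian cats} together with the module-functor structure on $\Psi$ to show $\psi$ is left $H$-linear (your argument via $\Psi(X)\cong X\otimes\Psi(\mathbf 1)$ and associativity of $\sigma$ is a clean special case of the paper's computation with $\Psi(M\otimes V)\cong M\otimes\Psi(V)$).

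The one genuine divergence is in the final step. The paper feeds the adjunction $(\Psi,\,-\,\Box_B H)$ directly into Theorem~\ref{theorem 1}: the monad $\tilde T$ has $\tilde T(I)\cong{}^{\mathrm{co}B}H$, Barr--Beck gives $\mathcal{M}^B\simeq\mathrm{Mod}_{\mathcal{C}}({}^{\mathrm{co}B}H)$ as \emph{module} categories, and the $H$-linearity of $\psi$ is used to identify the monad multiplication on ${}^{\mathrm{co}B}H$ with the restriction of multiplication in $H$. You instead bypass Theorem~\ref{theorem 1} entirely and go through the Hopf-algebraic dictionary: once $B$ is a quotient left $H$-module coalgebra with $H$ faithfully coflat over it, Corollary~\ref{cor1-1} gives $H_A=B$ and faithful flatness over $A={}^{\mathrm{co}B}H$, and Theorem~\ref{MW} supplies the equivalence $\mathcal{M}^H_A\simeq\mathcal{M}^B$. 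Your route is slightly more economical and stays closer to the Hopf-algebraic machinery of Section~\ref{Hopf Algebras}; the paper's route has the advantage of yielding the equivalence explicitly as one of module categories over $\mathcal{C}$ (this is stated in the proof though not in the theorem). Your flagged concern about verifying the hypothesis of Lemma~\ref{tensor product of abelian cats} is legitimate, and the paper's proof does not address it either; it appears to be an implicit standing assumption on the module-category structure of $\mathcal{M}^B$.
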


\begin{proof}

Notice that by Lemma \ref{induced coalgebra map} and Corollary \ref{Psi is adjoint to cotensor} we can deduce that there exists a map of coalgebras $\psi : H \rightarrow B$ and that $\Psi = \psi^*$ is left adjoint to the cotensor functor $ - \underset{B}{\Box} H$. But $\Psi$ is also left adjoint to $\Omega$. Since $\Omega$ is exact and faithful, $- \underset{B}{\Box} H$ must also be exact and faithful. But then by Lemma \ref{fc yields surjection} we know that the coalgebra map $\psi : H \rightarrow B$ is a surjection. 
Now, we can consider the right coideal $^{\text{co}B}H$ in $H$ defined as follows: $ ^{\text{co}B}H = \{ h \in H | \psi(h_{(1)}) \otimes h_{(2)} = \psi(1) \otimes h \}$. We will show that this is also a subalgebra of $H$. 

To do this we will use the arguments of Theorem \ref{theorem 1}.  Indeed, the adjunction $(\Psi, - \underset{B}{\Box} H)$ satisfies all the conditions of \ref{theorem 1} and the composition of the two functors $ \tilde{T} := (-) \underset{B}{\Box} H \circ \Psi$ defines a monad on $\mathcal{C}$.  But now the image of $I\simeq k$ under $\tilde{T}$ is  $\tilde{T}(I) = \Psi(I) \underset{B}{\Box} H $ which is isomorphic to $ ^{\text{co}B}H$. Therefore $^{\text{co}B}H$ is not only a right coideal of $H$ but also an algebra in $\mathcal{M}^H$. Moreover, $\mathcal{M}^B \simeq \text{Mod}_{\mathcal{M}^H}(^{\text{co}B}H) $ as module categories over $\mathcal{M}^H$. 
We need to show that the multiplication map $\mu_I$ on  $ ^{\text{co}B}H$ induced by the monad $\tilde{T}$ coincides with the multiplication on $H$. This is equivalent to showing that the coalgebra map $\psi : H \rightarrow B$ is left $H$-linear. 
By Lemma \ref{tensor product of abelian cats} we know that the action of $\mathcal{M}^H$ is induced by an $H$-module structure $\sigma : H \otimes B \rightarrow B$ on $B$. Moreover, since $\Psi$ is a functor of module categories it follows that $ \Psi( M \otimes V) \simeq M \otimes \Psi(V)$. Now, $M \otimes \Psi(V)$ is the $B$-comodule with underlying vector space $M \otimes V$ and comodule structure given by $\sigma$, therefore $ m \otimes v \mapsto m_{(0)} \otimes v_{(0)} \otimes \sigma(m_{(1)}, \psi(v_{(1)}))$. On the other hand, $\Psi(M \otimes V)$ has comodule structure given by $m \otimes v \mapsto m_{(0)} \otimes v_{(0)} \otimes \psi(m_{(1)} v_{(1)})$. Therefore $\psi$ must be compatible with the $H$-module structure on $B$.

This means that $B$ has the structure of a left $H$-module quotient coalgebra. Moreover $H$ is faithfully coflat over $B$.  Using Corollary \ref{cor1-1} it can be deduced that then $^{\text{co}B}H$ is a right coideal subalgebra of $H$ and that $H$ is faithfully flat over $^{\text{co}B}H$.

\end{proof}

\subsection{The converse of Theorem \ref{theorem 2}} \label{converse}

In this section we prove that a quantum subgroup satisfies the assumptions of Theorem \ref{theorem 2}. This can be seen as a converse statement of the theorem. It also demonstrates how quantum subgroups fit into the categorical picture presented in this section.

\begin{proposition} Let $H$ be a Hopf algebra with bijective antipode and $B$ a left $H$-module quotient coalgebra of $H$.Assume further that $H$ is faithfully coflat over $B$. Let us denote the projection map by $\pi : H \rightarrow B$. Then $\mathcal{M}^B$ is a module category over $\mathcal{M}^H$. Moreover $ \pi^* : \mathcal{M}^H \rightarrow \mathcal{M}^B$ is a functor of module categories which commutes with the two forgetful functors to Vect. Its right adjoint $- \underset{B}{\Box} H : \mathcal{M}^B \rightarrow \mathcal{M}^H$ is exact and faithful and a functor of module categories.
\end{proposition}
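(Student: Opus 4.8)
The plan is to establish the four assertions in turn, constructing the module-category structure on $\mathcal{M}^B$ by hand from the module-coalgebra structure of $B$, and then re-expressing the coalgebra facts already proved (Corollary \ref{Psi is adjoint to cotensor}, Corollary \ref{cor1-1}) in module-category terms; the only genuinely new computation will be the module-functor constraint for the cotensor functor.

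Write $B = H/I$ with $I$ a coideal and a left ideal, so that left multiplication descends to a map $\sigma : H \otimes B \to B$, $\sigma(h \otimes \bar g) = \overline{hg}$. Since $I$ is a coideal, $\sigma$ is a morphism of coalgebras (with the tensor coalgebra structure on $H \otimes B$), and since $I$ is a left ideal, $\sigma$ is associative and unital; thus $B$ is a left $H$-module coalgebra. This is the situation of Lemma \ref{tensor product of abelian cats}, only now $\sigma$ is used to \emph{build} the action rather than to extract it: for $X \in \mathcal{M}^H$ and $M \in \mathcal{M}^B$ I would define $X \otimes M$ to be $X \otimes_k M$ with $B$-coaction
$$X \otimes M \xrightarrow{\ \rho_X \otimes \rho_M\ } X \otimes H \otimes M \otimes B \xrightarrow{\ \mathrm{id} \otimes \mathrm{flip} \otimes \mathrm{id}\ } X \otimes M \otimes H \otimes B \xrightarrow{\ \mathrm{id} \otimes \mathrm{id} \otimes \sigma\ } X \otimes M \otimes B .$$
Coassociativity and the counit axiom for this coaction follow from those of $\rho_X$ and $\rho_M$ together with $\sigma$ being a coalgebra map; exactness of $\otimes$ in each variable is inherited from $\otimes_k$ via the exact, faithful forgetful functor $\mathcal{M}^B \to \mathrm{Vect}$. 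The constraints $m_{X,Y,M}$ and $l_M$ are taken to be the identities on underlying vector spaces; their $B$-colinearity reduces to $\sigma(m_H \otimes \mathrm{id}) = \sigma(\mathrm{id} \otimes \sigma)$ and $\sigma(1_H \otimes -) = \mathrm{id}_B$ respectively, and the pentagon and triangle then commute because they commute in $\mathrm{Vect}$ and the forgetful functor is faithful.

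Next I would check that $\pi^* : \mathcal{M}^H \to \mathcal{M}^B$, which sends $(N,\rho_N)$ to $N$ with coaction $(\mathrm{id}\otimes\pi)\circ\rho_N$, is a functor of module categories commuting with the two forgetful functors to $\mathrm{Vect}$. Commutation with the forgetful functors is immediate, since $\pi^*$ is the identity on underlying spaces. For the module-functor constraint one takes $c_{X,N} : \pi^*(X \otimes N) \to X \otimes \pi^*(N)$ to be the identity of $X \otimes N$; it is $B$-colinear precisely because the $H$-action on $B$ is multiplication pushed through $\pi$, i.e. $\pi(x_{(1)} n_{(1)}) = \sigma(x_{(1)} \otimes \pi(n_{(1)}))$, and the coherence diagrams again hold on underlying spaces. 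For the right adjoint $-\underset{B}{\Box}H$: as $\pi^*$ commutes with the forgetful functors, Corollary \ref{Psi is adjoint to cotensor} gives that $\pi^*$ is left adjoint to $-\underset{B}{\Box}H : \mathcal{M}^B \to \mathcal{M}^H$. By hypothesis $H$ is faithfully coflat over $B$, which says that $-\underset{B}{\Box}H$ composed with the forgetful functor to $\mathrm{Vect}$ preserves and reflects short exact sequences, hence is in particular exact and faithful; since $\mathcal{M}^H \to \mathrm{Vect}$ is itself exact and reflects exactness, $-\underset{B}{\Box}H : \mathcal{M}^B \to \mathcal{M}^H$ is exact, and it is faithful because its composite with the (faithful) forgetful functor is. It then remains to produce a module-functor structure on $-\underset{B}{\Box}H$, i.e. a natural isomorphism $d_{X,V} : (X \otimes V)\underset{B}{\Box}H \xrightarrow{\ \sim\ } X \otimes (V\underset{B}{\Box}H)$ compatible with the constraints; I would define its inverse, the ``untwisting'' map $X \otimes (V\underset{B}{\Box}H) \to (X \otimes V)\underset{B}{\Box}H$, by $x \otimes v \otimes h \mapsto x_{(0)} \otimes v \otimes x_{(1)} h$. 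A Sweedler computation using the relation $v_{(0)} \otimes v_{(1)} \otimes h = v \otimes \pi(h_{(1)}) \otimes h_{(2)}$ defining $V \underset{B}{\Box} H$, together with $\pi(x_{(1)} h_{(1)}) = \sigma(x_{(1)} \otimes \pi(h_{(1)}))$, shows this map lands in $(X \otimes V)\underset{B}{\Box}H$ and is $H$-colinear, with two-sided inverse $x \otimes v \otimes h \mapsto x_{(0)} \otimes v \otimes S(x_{(1)}) h$ — here the antipode (bijective by hypothesis) is used. Naturality is clear, and the coherence diagrams for $d$ follow from coassociativity and the antipode axioms. Together with the first two points, this gives all the hypotheses of Theorem \ref{theorem 2}.

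I expect the module-functor structure on $-\underset{B}{\Box}H$ to be the main obstacle. The constraints for the $\mathcal{M}^B$-action and for $\pi^*$ are identities on underlying vector spaces, so those verifications are essentially formal, whereas $d_{X,V}$ is a genuine Hopf--Galois-type untwisting: it is exactly here that one must use both that $H$ is a Hopf algebra (so $\sigma$ is the restriction of a multiplication) and that its antipode is bijective, and checking that $d_{X,V}$ is well defined, bijective, natural and coherent carries essentially all the computational content of the proposition.
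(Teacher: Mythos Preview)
Your proposal is correct and follows essentially the same route as the paper: the paper also takes the identity as the module-functor constraint for $\pi^*$ (reducing to $H$-linearity of $\pi$) and builds the module-functor isomorphism for $-\underset{B}{\Box}H$ via exactly your untwisting map $x\otimes m\otimes h\mapsto x_{(0)}\otimes m\otimes x_{(1)}h$ with inverse $x\otimes m\otimes h\mapsto x_{(0)}\otimes m\otimes S(x_{(1)})h$. As you anticipated, the bulk of the paper's proof is the Sweedler verification that these two maps are well defined, $H$-colinear, and mutually inverse.
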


\begin{proof}
We have already seen how the coalgebra map $\pi : H \rightarrow B$ induces a functor $\pi^* : \mathcal{M}^H \rightarrow \mathcal{M}^B$ which is left adjoint to the cotensor functor $- \underset{B}{\Box} H : \mathcal{M}^B \rightarrow \mathcal{M}^H$. Moreover $\pi^*$ commutes with the two forgetful functors to Vect by its definition and $- \underset{B}{\Box} H$ is exact and faithful since $H$ is assumed to be faithfully coflat over $B$. It remains to show that both functors are functors of module categories. 

We begin by $\pi^*$. It is enough to show that $\pi^*(Y \otimes N) \simeq Y \hat{\otimes} \pi^*(N)$ for every $Y, N \in \mathcal{M}^H$. We notice that both $\pi^*(Y \otimes N)$ and $Y \hat{\otimes} \pi^*(N)$ are equal to $Y \otimes N$ as vector spaces. We claim that the identiy map is also a morphism of $B$-comodules and therefore that they are isomorphic in $\mathcal{M}^B$. Indeed, the $B$-comodule structure on $\pi^*(Y \otimes N)$ is given by $ y \otimes n \mapsto y_{(0)} \otimes n_{(0)} \otimes \pi(y_{(1)}n_{(1)})$. On the other hand the $B$-comodule structure on $Y \hat{\otimes} \pi^*(N)$ is given by $ y \otimes n \mapsto y_{(0)} \otimes n_{(0)} \otimes y_{(1)}\pi(n_{(1)})$. Since $\pi$ is left $H$-linear by assumption, the isomorphism follows. 

We now proceed with the proof that  $- \underset{B}{\Box} H : \mathcal{M}^B \rightarrow \mathcal{M}^H$ is a functor of module categories. It is enough to show that $X \otimes (M \underset{B}{\Box} H) \simeq (X \hat{\otimes} M) \underset{B}{\Box} H$ in $\mathcal{M}^H$ for every $X \in \mathcal{M}^H$, $M \in \mathcal{M}^B$. We claim that the map $$\gamma : X \otimes (M \underset{B}{\Box} H) \rightarrow (X \hat{\otimes} M) \underset{B}{\Box} H$$ given by $$ x \otimes (m \otimes h) \mapsto (x_{(0)} \otimes m) \otimes x_{(1)}h$$ is an isomorphism with inverse map $\tilde{\gamma}$ given by $$ (x \otimes m) \otimes h \mapsto x_{(0)} \otimes (m \otimes S(x_{(1)})h).$$
We start by showing that if $x \otimes ( m \otimes h)$ is an element of $X \otimes (M \underset{B}{\Box} H)$ then $(x_{(0)} \otimes m) \otimes x_{(1)}h$ is in  $(X \hat{\otimes} M) \underset{B}{\Box} H$. Recall that $$M \underset{B}{\Box} H = \{ m \otimes h | m_{(0)} \otimes m_{(1)} \otimes h = m \otimes \pi(h_{(1)}) \otimes h_{(2)} \}.$$ 
Also note that $$ (X \hat{\otimes} M) \underset{B}{\Box} H = \{ x \otimes m \otimes h | x_{(0)} \otimes m_{(0)} \otimes x_{(1)}m_{(1)} \otimes h = x \otimes m \otimes \pi(h_{(1)}) \otimes h_{(2)} \}.$$ 
Consider $(x_{(0)} \otimes m) \otimes x_{(1)}h$. We need to show that $ (x_{(0)})_{(0)} \otimes m_{(0)} \otimes (x_{(0)})_{(1)} m_{(1)} \otimes x_{(1)} h = x_{(0)} \otimes m \otimes \pi((x_{(1)})_{(1)} h_{(1)}) \otimes (x_{(1)})_{(2)} h_{(2)}$.
Indeed, $$(x_{(0)})_{(0)} \otimes m_{(0)} \otimes (x_{(0)})_{(1)} m_{(1)} \otimes x_{(1)} h $$
$$ = x_{(0)} \otimes m_{(0)} \otimes (x_{(1)})_{(1)} m_{(1)} \otimes (x_{(1)})_{(2)}h$$
$$=  x_{(0)} \otimes m \otimes (x_{(1)})_{(1)} \pi(h_{(1)}) \otimes (x_{(1)})_{(2)}h_{(2)}$$
$$ =  x_{(0)} \otimes m \otimes \pi((x_{(1)})_{(1)} h_{(1)}) \otimes (x_{(1)})_{(2)} h_{(2)}.$$

We further need to check that this map is a morphism of $H$-comodules. The $H$-comodule structure on $x \otimes (m \otimes h)$ is given by $x \otimes m \otimes h \mapsto x_{(0)} \otimes m \otimes h_{(1)} \otimes x_{(1)}h_{(2)}$. On the other hand, the $H$-comodule structure on $(x_{(0)} \otimes m) \otimes x_{(1)}h$ is given by $ x_{(0)} \otimes m \otimes x_{(1)}h \mapsto x_{(0)} \otimes m \otimes (x_{(1)})_{(1)}h_{(1)} \otimes (x_{(1)})_{(2)} h_{(2)}$.  By the above isomorphism $ x_{(0)} \otimes m \otimes h_{(1)} \otimes x_{(1)}h_{(2)} \mapsto (x_{(0)})_{(0)} \otimes m \otimes (x_{(0)})_{(1)} h_{(1)} \otimes x_{(1)}h_{(2)}$. But the last expression is equal to $ x_{(0)} \otimes m \otimes (x_{(1)})_{(1)} h_{(1)} \otimes (x_{(1)})_{(2)}h_{(2)}$. This is exactly what we wanted. 

We will show now that $\tilde{\gamma}$ is also well defined. 
Let $(x \otimes m) \otimes h \in  (X \hat{\otimes} M) \underset{B}{\Box} H$. We want to show that $\tilde{\gamma}((x \otimes m) \otimes h) = x_{(0)} \otimes m \otimes S(x_{(1)})h  \in X \otimes (M \underset{B}{\Box}H)$. For this it is enough to show that $m_{(0)} \otimes m_{(1)} \otimes S(x_{(1)}) h = m \otimes \pi((S(x_{(1)}))_{(1)} h_{(1)}) \otimes (S(x_{(1)}))_{(2)} h_{(2)} = m \otimes S((x_{(1)})_{(2)}) \pi( h_{(1)}) \otimes S((x_{(1)})_{(1)}) h_{(2)} $. 

Since $(x \otimes m) \otimes h \in  (X \hat{\otimes} M) \underset{B}{\Box} H$, we know that: $$ x_{(0)} \otimes m_{(0)} \otimes x_{(1)}m_{(1)} \otimes h = x \otimes m \otimes \pi(h_{(1)}) \otimes h_{(2)}$$ 
$$\Rightarrow x_{(0)} \otimes m_{(0)} \otimes S((x_{(1)})_{(2)}) x_{(1)}m_{(1)} \otimes h = x \otimes m \otimes S((x_{(1)})_{(2)}) \pi(h_{(1)}) \otimes h_{(2)}$$ 
$$\Rightarrow x_{(0)} \otimes m_{(0)} \otimes S((x_{(1)})_{(2)}) x_{(1)}m_{(1)} \otimes S((x_{(1)})_{(1)}) h = x \otimes m \otimes S((x_{(1)})_{(2)}) \pi(h_{(1)}) \otimes S((x_{(1)})_{(1)}) h_{(2)}.$$

We now consider only the left hand side of the above equation and have the following:
$$ x_{(0)} \otimes m_{(0)} \otimes S((x_{(1)})_{(2)}) x_{(1)}m_{(1)} \otimes S((x_{(1)})_{(1)}) h$$
$$ = (x_{(0)})_{(0)} \otimes m_{(0)} \otimes S(x_{(1)}) x_{(1)}m_{(1)} \otimes S((x_{(0)})_{(1)}) h$$
$$= x_{(0)} \otimes m_{(0)} \otimes S(x_{(1)}) (x_{(1)})_{(2)} m_{(1)} \otimes S((x_{(1)})_{(1)}) h$$
$$= x_{(0)} \otimes m_{(0)} \otimes S((x_{(1)})_{(1)} \epsilon((x_{(1)})_{(2)})) (x_{(1)})_{(2)} m_{(1)} \otimes S((x_{(1)})_{(1)}) h$$
$$= x_{(0)} \otimes m_{(0)} \otimes \epsilon(x_{(1)}) \epsilon((x_{(1)})_{(2)})) m_{(1)} \otimes S((x_{(1)})_{(1)}) h$$
$$ = x \otimes m_{(0)} \otimes m_{(1)} \otimes S(x_{(1)})h.$$

This means that $x \otimes m_{(0)} \otimes m_{(1)} \otimes S(x_{(1)})h = x \otimes m \otimes S((x_{(1)})_{(2)}) \pi(h_{(1)}) \otimes S((x_{(1)})_{(1)}) h_{(2)}$ and therefore we can conclude that $\tilde{\gamma}$ is indeed well defined. 

Finally, it is easy to see that $\gamma$  is one-to-one. To show that it is also surjective, we consider the composition with the map $\tilde{\gamma}$ and show that it yields the identity map on $(X \hat{\otimes} M) \underset{B}{\Box} H$. 
Indeed, $$\gamma \circ \tilde{\gamma}(x \otimes m \otimes h)$$
$$ = \gamma(x_{(0)} \otimes m \otimes S(x_{(1)})h)$$
$$ = (x_{(0)})_{(0)} \otimes m \otimes (x_{(0)})_{(1)} S(x_{(1)}) h$$
$$= x_{(0)} \otimes m \otimes (x_{(1)})_{(1)} S((x_{(1)})_{(2)}) h$$
$$= x_{(0)} \otimes m \otimes \epsilon(x_{(1)}) h$$
$$= x \otimes m \otimes h. $$

\end{proof}

\subsection{Morita-Takeuchi equivalence}

Recall the results from \ref{Mod_A equivalent to coalgebra cat} where we assumed that the right adjoint $Q$ is cocontinuous.  We are then in a situation as follows: 

$$
\text{ \xymatrixcolsep{5pc}
\xymatrix{ {\mathcal{M}^H} \ar@/^/[d]^{Forget}  \ar@/^/[r]^{F^T}   & {\text{Mod}_{\mathcal{M}^H}(A)} \ar@/^/[l]_{U^T} \ar[d]^{K} \\
{\text{Vect}} \ar@/^/[u]^{(-) \otimes H} \ar@/^/[r]^{U^G} \ar[ru]^{Q}   & {\mathcal{M}^C} \ar@/^/[l]^{F^G} }} $$

$(F^T, U^T)$ is the monad adjunction for the monad defined in Theorem \ref{theorem 1} using the functors Res and Ind. $F^T$ is then the functor of tensoring an $H$-comodule with $A$, and $U^T$ is exact and faithful. Similarly, $(F^G, U^G)$ is the comonad adjunction from Theorem \ref{category equivalent to C comodules} where $F^G$ is the forgetful functor. $K$ is an equivalence. 

We would like to use Theorem \ref{theorem 2} to deduce that $C$ is actually a quotient left $H$-module coalgebra. However, the functor $F^T \circ K$ does not carry the forgetful functor to the forgetful functor. $F^T(V) = V \otimes A$ and $K(W)= W$. 

We believe however that $\mathcal{M}^C$ is Morita-Takeuchi equivalent to a category of comodules over a coalgebra $B$ such that the composition with the Morita equivalence yields a functor of module categories $\Psi : \mathcal{C} \rightarrow \mathcal{M}^B$ satisfying all conditions of Theorem \ref{theorem 2}. 

This would yield the following diagram:

$$ 
\text{ \xymatrixcolsep{5pc} 
\xymatrix{ {} & {\text{Vect}} \ar@/^/[ldd]^{- \otimes H} \ar@/_/[rdd]_{U^G} \ar[dd]^{Q} &{}\\
{} & {} & {} \\ 
{\mathcal{M}^H} \ar@/^/[ruu]^{Forget} \ar@/^/[dd]^{ \otimes ^{\text{co}B}{H}}  \ar@/^/[r]^{F^T}   & {\text{Mod}_{\mathcal{M}^H}(A)} \ar@/^/[l]_{U^T} \ar[r]^{K} & {\mathcal{M}^C} \ar@/_/[luu]_{F^G} \ar[ldd]^{\text{Morita}} \\
{} & {} & {} \\
{\text{Mod}_{\mathcal{M}^H}(^{\text{co}B}H)} \ar@/^/[uu]^{forget} & {\mathcal{M}^B} \ar[l]^{\tilde{K}} \ar[luu]^{- \underset{B}{\Box} H} & {}   }} $$

$$$$

In this direction, we believe that an extra condition needed to proceed from the results of section \ref{prwto} to the main theorem \ref{theorem 2} of section \ref{deytero} is that $H$ is an object in $\text{Mod}_{\mathcal{M}^H}(A)$.\\ \\


\textbf{Acknowledgements} 
This paper is an edited version of my doctoral thesis. I would like to thank my supervisor Kobi Kremnizer, for giving me this project and introducing me to this beautiful area of research, for his optimism and guidance throughout my work, but also for his constant support and friendship during my entire time in Oxford. I am also grateful to Uli Kr\"ahmer for suggesting to add the section \ref{converse} to this work and also for pointing out which isomorphisms to use in the proof. This work was supported by a scholarship awarded by the Alexander S. Onassis Public Benefit Foundation.




\end{document}